\theoremstyle{plain}
\newtheorem{theorem}{Theorem}
\newtheorem{lemma}{Lemma}
\newtheorem{assumption}{Assumption}
\theoremstyle{definition} % For roman text in the body
\newtheorem{remark}{Remark}
\newtheorem{example}{Example}
\numberwithin{equation}{section}
\def\mE{{\mathcal E}}
\def\mX{{\mathcal X}}
\def\R{{\mathbb R}}
\def\mL{{\mathcal L}}
\def\gt{{\rightarrow}}
\def\1{{\mathbf 1}}
\newcommand{\eps}{\varepsilon}
\newcommand{\average}[1]{ \left\langle#1 \right\rangle}
\newcommand{\half}{\frac{1}{2}}
\newcommand{\RR}{\mathbb{R}}
\newcommand{\rd}{\mathrm{d}}
\newcommand{\Lop}{\mathcal{L}}
\newcommand{\tr}{\tilde{\rho}}
\newcommand{\bv}{{\boldsymbol{v}}}
\newcommand{\bx}{{\boldsymbol{x}}}
\newcommand{\bb}{{\boldsymbol{b}}}
\newcommand{\bn}{{\boldsymbol{n}}}
\newcommand{\bl}{{BL}}
\newcommand{\bs}{{\boldsymbol{S}}}
\newcommand{\norm}[1]{ \| #1 \|}
\newcommand{\order}{\mathcal{O}}
\title{Solving multiscale steady radiative transfer equation using neural networks with uniform stability}
\author{Yulong Lu \thanks{Department of Mathematics and Statistics, Lederle Graduate Research Tower, University of Massachusetts, 710 N. Pleasant Street, Amherst, MA 01003. (lu@math.umass.edu)} 
	\and  Li Wang\thanks{School of Mathematics, University of Minnesota, Twin cities, MN 55455. (wang8818@umn.edu)}
	\and  Wuzhe Xu \thanks{School of Mathematics, University of Minnesota, Twin cities, MN 55455. (xu000355@umn.edu)} }
\date{}
\begin{document}
	
	\maketitle
	
	% REQUIRED
	\begin{abstract}
		This paper concerns solving the steady radiative transfer equation with diffusive scaling, using the physics informed neural networks (PINNs). The idea of PINNs is to minimize a least-square loss function, that consists of the residual from the governing equation, the mismatch from the boundary conditions, and other physical constraints such as conservation. It is advantageous of being flexible and easy to execute, and brings the potential for high dimensional problems. Nevertheless, due the presence of small scales, the vanilla PINNs can be extremely unstable for solving multiscale steady transfer equations. In this paper, we propose a new formulation of the loss based on the macro-micro decomposition. We prove that, the new loss function is uniformly stable with respect to the small Knudsen number in the sense that the $L^2$-error of the neural network solution is uniformly controlled by the loss. When the boundary condition is an-isotropic, a boundary layer emerges in the diffusion limit and therefore brings an additional difficulty in training the neural network. To resolve this issue, we include a boundary layer corrector that carries over the sharp transition part of the solution and leaves the rest easy to be approximated. The effectiveness of the new methodology is demonstrated in extensive numerical examples.  
	\end{abstract}
	
	% REQUIRED
	\noindent \textbf{Keywords.} radiative transfer equation, diffusion limit, boundary layer, PINN, uniform stability
	% \begin{keywords}
		%   radiative transfer equation, diffusion limit, boundary layer, PINN, uniform stability 
		% \end{keywords}
	
	% REQUIRED
	\noindent \textbf{AMS subject classifications.} 68T07, 65N12, 82B40, 76R50
	% \begin{AMS}
		%   68T07, 65N12, 82B40, 76R50
		% \end{AMS}
	
	\section{Introduction}
	Developing efficient and robust numerical scheme for multiscale kinetic equation has always been a challenging yet important subject of research, and has attracted a lot of attention in the past decade. The main difficulty comes from the stiffness raised by multiple scales of the equation, which generically requires fine spatial mesh grid and short time step to guarantee both accuracy and stability. A large number of numerical schemes has been devoted to relaxing such a requirement, in the traditional grid-based framework, including the finite difference method, finite volume method, discrete Galerkin method, and etc \cite{LM83, Jin12, DP14}. Recently, deep learning method has emerged as a competitive mesh-free method for solving partial differential equations (PDEs). The idea is to represent solutions of PDEs by (deep) neural networks to take advantage of the rich expressiveness of neural networks representation. The parameters of neural networks are chosen by training or optimizing some loss functions associated with the PDE. It is advantageous of being intuitive and easy to execute, and also offers an innovational approach for solving high dimensional problems. 
	
	Many deep learning methods, based on optimizing different loss functions, have been developed for solving PDEs. To the best of our knowledge, the first neural network method PDE solver dates back to \cite{lagaris1998artificial} and builds on minimizing the $L^2$-residual of the PDE and that of the boundary/initial conditions. The now-days popular physical informed neural network (PINN)  \cite{raissi2019physics} and deep Galerkin method (DGM) \cite{sirignano2018dgm} fail into the same residual minimization framework. Another method called Deep Ritz Method \cite{weinan2018deep} is designed to solve some PDE problems with variational structures by exploiting the Ritz formulation of PDEs. The deep BSDE method \cite{han2018solving} was developed for solving some parabolic PDEs based on the stochastic representation of the solutions. For discussions of other machine learning methods for PDEs, we refer the interested reader to the excellent review article \cite{han2020algorithms}. 
	
	Recently, several works \cite{chen2021solving, hwang2020trend,liu2021deep} proposed neural network methods for solving kinetic equations by employing the framework of PINNs. However, the error bounds proved in those works for vanilla PINNs deteriorate in the diffusive regime where the Knudsen number is small. More specifically, the stability estimates proved for the vanilla PINN loss functions blow up as the  Knudsen number tends to zero. The purpose of the present paper is to build a new loss function which satisfies a stability estimate that is uniform with respect to the Knudsen number in the diffusive regime. Consider the steady radiative transfer equation (RTE), which takes the following general form:  
	\begin{equation} \label{eqn:rte}
		\begin{cases}{}
			\displaystyle \eps \bv \cdot \nabla_{\bx} f(\bx, \bv) =  \sigma_s(\bx) \Lop f(\bx, \bv) -\eps^2 \sigma_a(\bx) f + \eps^2 G(\bx), (\bx,\bv) \in \Omega := \Omega_x \times  \bs^{d-1}\,,\\
			f(\bx, \bv) = \phi(\bx, \bv), (\bx,\bv) \in \Gamma_-.
		\end{cases}
	\end{equation}
	Here $f(t,\bx,\bv)$ is the distribution of particles at time $t$ and location $\bx$ with velocity $\bv$, $G(\bx)$ is source function and $\Gamma_- := \{(\bx,\bv) \in \partial \Omega_x \times \bs^{d-1} |\  \bv \cdot \bn_x < 0 \}$ is the inflow boundary. Assume that $\Omega_x$ is bounded and Lipschitz on $\R^d$. We also assume that the inflow boundary value $\phi(\bx, \bv) \in L^2(\Gamma_-)$. The parameter $\eps>0$, often termed as Knudsen number, is a dimensionless parameter that governs the regime of the equation. In particular, $\eps \sim \mathcal O (1)$ refers to kinetic regime, and $\eps \ll 1$ corresponds to the diffusive regime.    
	The scattering operator $\mL$ is defined by 
	\begin{equation*} % \label{eqnL}
		\Lop f = \frac{1}{|\bs^{d-1}|}\int_{\bs^{d-1}} K(\bv, \bv') ( f(\bv')-f(\bv)) \rd \bv',
	\end{equation*}
	where $K:\bs^{d-1}\times \bs^{d-1} \gt \R$ is a nonnegative kernel. 
	The functions $\sigma_s$ and $\sigma_a$ are the scattering coefficient and absorption coefficient respectively. In addition, we assume the following assumption  is valid. 
	
	\begin{assumption}\label{ass:sigma} There exist positive constants $\sigma_{\min}$ and $\sigma_{\max}$ such that 
		\begin{equation*} % \label{eq:sigma}
			0<\sigma_{\min} \leq \sigma_s(\bx) \leq \sigma_{\max}  \text{ and } 0\leq \sigma_a (\bx)\leq \sigma_{\max}.
		\end{equation*}
	\end{assumption}
	
	Throughout the paper, we also make the following assumption on the scattering operator $\Lop$, which will play an essential role in obtaining a stability estimate for our new PINN loss function.  
	
	\begin{assumption}\label{ass:L}
		The scattering operator $\mL$ satisfies 
		\begin{itemize}
			\item[1)] $\average{\Lop f} := \frac{1}{|\bs^{d-1}|} \int_{\bs^{d-1}} \Lop f \rd \bv = 0 $ for any $f(\bv) \in L^2(\bs^{d-1})$;
			\item[2)] the null space of $\Lop$ is $\mathcal N(\Lop) = \{f = \average{f}\}$;
			\item[3)] $\Lop$ is non-positive self-adjoint in $L^2(\bs^{d-1})$, and moreover, $\average{f \Lop f} \leq -c \average{f^2}$ for every $f \in \mathcal N^\perp (\Lop)$ and some constant $c>0$;
			\item[4)] $\Lop$ admits a pseudo-inverse from $\mathcal N^\perp(\Lop)$ to $\mathcal N^\perp(\Lop)$.
			\item[5)] There exists $C_K>0$ such that $\|\mL f\|_{L^2(\Omega)} \leq C_K \|f\|_{L^2(\Omega)}$.
		\end{itemize}
	\end{assumption}
	
	Under Assumption \ref{ass:sigma} and Assumption \ref{ass:L}, it is well-known that RTE \eqref{eqn:rte} is well-posedness as shown in the theorem below. To state the theorem, let us first define the function space $\mathcal{X}$ by setting
	$$
	\mathcal{X} := \{ f\in L^2(\Omega)\ |   \bv\cdot \nabla_{\bx}f \in L^2(\Omega)\}.
	$$
	
	\begin{theorem}\label{thm:wellpos}
		Suppose that Assumption \ref{ass:sigma}  and Assumption \ref{ass:L} hold. There exists a unique solution $f$ to \eqref{eqn:rte} such that $f\in \mX$ and 
		$$
		\|f\|_{L^2(\Omega)} +  \|\bv \cdot \nabla_{\bx} f\|_{L^2(\Omega)}  \leq C (\|\phi\|_{L^2(\Gamma_I)} + \|G\|_{L^2(\Omega_x)}),
		$$
		where the constant $C$ depends on $\sigma_a,\sigma_s$, $\Omega$ and $\eps$. 
	\end{theorem}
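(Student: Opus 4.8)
\emph{Proof plan.}
The plan is the classical one for linear kinetic boundary--value problems: first establish the a priori estimate --- uniqueness then follows at once by linearity, applied to the difference of two solutions --- and then deduce existence by a vanishing--regularization argument. One preliminary remark makes the second half of the estimate essentially free: rewriting \eqref{eqn:rte} as $\eps\,\bv\cdot\nabla_{\bx}f=\sigma_s\Lop f-\eps^2\sigma_a f+\eps^2 G$ and using Assumptions~\ref{ass:sigma} and \ref{ass:L}(5),
\[
\norm{\bv\cdot\nabla_{\bx}f}_{L^2(\Omega)}\ \le\ \tfrac1{\eps}\bigl(\sigma_{\max}C_K+\eps^2\sigma_{\max}\bigr)\norm{f}_{L^2(\Omega)}+\eps\,\norm{G}_{L^2(\Omega_x)},
\]
so it suffices to prove $\norm{f}_{L^2(\Omega)}\le C(\eps)\bigl(\norm{\phi}_{L^2(\Gamma_-)}+\norm{G}_{L^2(\Omega_x)}\bigr)$.

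\emph{Microscopic part.} Testing \eqref{eqn:rte} with $f$ and integrating over $\Omega$, the transport term produces the boundary flux $\tfrac{\eps}{2}\int_{\partial\Omega_x\times\bs^{d-1}}(\bv\cdot\bn_x)f^2$, whose inflow contribution equals $-\tfrac{\eps}{2}\int_{\Gamma_-}|\bv\cdot\bn_x|\phi^2$ and whose outflow contribution is nonnegative; Assumption~\ref{ass:L}(1)--(3) gives $\int_\Omega\sigma_s(\Lop f)f\le-c\,\sigma_{\min}\norm{f-\average{f}}_{L^2(\Omega)}^2$; the absorption term has the favorable sign; and since $G=G(\bx)$ is velocity--independent, the source couples only to $\rho:=\average{f}$. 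This bounds the weighted outflow trace of $f$ and the microscopic part $g:=f-\rho$ in $L^2(\Omega)$ in terms of $\norm{\phi}_{L^2(\Gamma_-)}$ and $\eps^2\,\norm{G}_{L^2(\Omega_x)}\norm{\rho}_{L^2(\Omega_x)}$.

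\emph{Macroscopic part --- the main obstacle.} The energy identity controls only $g$, and since $\sigma_a$ may vanish there is no zeroth--order coercivity to fall back on, so the bound for $\rho$ must be extracted from the transport operator together with the inflow condition. I would use the standard elliptic test function: let $\xi$ solve $-\Delta_{\bx}\xi=\rho$ in $\Omega_x$ with $\xi|_{\partial\Omega_x}=0$, so that $\norm{\nabla_{\bx}\xi}_{L^2(\Omega_x)}\lesssim\norm{\rho}_{L^2(\Omega_x)}$ and, by elliptic regularity (valid e.g.\ on convex or $C^{1,1}$ domains; for general Lipschitz $\Omega_x$ this point requires extra care), $\norm{\nabla_{\bx}\xi}_{L^2(\partial\Omega_x)}\lesssim\norm{\rho}_{L^2(\Omega_x)}$ as well; then test \eqref{eqn:rte} with $\psi(\bx,\bv):=\bv\cdot\nabla_{\bx}\xi(\bx)$. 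Integration by parts in $\bx$ together with $\average{v_iv_j}=\tfrac1d\delta_{ij}$ makes the principal part of the transport term reproduce a term $\gtrsim\eps\,\norm{\rho}_{L^2(\Omega_x)}^2$; all remaining interior and source $\rho$--contributions vanish because $\average{\bv}=0$ (and $\Lop\rho=0$), leaving interior terms bounded by $C(1+\eps^2)\,\norm{g}_{L^2(\Omega)}\norm{\rho}_{L^2(\Omega_x)}$, while the one delicate term --- the boundary contribution $\eps\int_{\partial\Omega_x\times\bs^{d-1}}(\bv\cdot\bn_x)\,f\,(\bv\cdot\nabla_{\bx}\xi)$ --- is bounded, via Cauchy--Schwarz in $\bv$, by $C\eps\,\norm{\nabla_{\bx}\xi}_{L^2(\partial\Omega_x)}$ times the weighted boundary trace norm of $f$ controlled in the previous step. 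Dividing by $\eps\,\norm{\rho}_{L^2(\Omega_x)}$, substituting the microscopic estimate, and using Young's inequality to absorb the resulting quadratic terms yields $\norm{\rho}_{L^2(\Omega_x)}\le C(\eps)\bigl(\norm{\phi}_{L^2(\Gamma_-)}+\norm{G}_{L^2(\Omega_x)}\bigr)$; hence the desired $L^2$ bound on $f=\rho+g$, and then the derivative bound by the first step.

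\emph{Existence.} Adding an artificial damping $\delta f$ ($\delta>0$) to the left of \eqref{eqn:rte} makes the corresponding bilinear form, tested against $f$ itself over the graph space $\mX_0:=\{f\in\mX:\ f|_{\Gamma_-}=0\}$, coercive on $L^2(\Omega)$; after lifting the inflow datum $\phi$ into $\mX$ (standard for the weighted trace space on $\Gamma_-$), the Lions--Lax--Milgram lemma gives a solution $f_\delta\in\mX$ of the regularized problem, and since the damping only strengthens the a priori bounds, $\{f_\delta\}$ and $\{\bv\cdot\nabla_{\bx}f_\delta\}$ are bounded in $L^2(\Omega)$ uniformly in $\delta$; a weak limit then solves \eqref{eqn:rte} in $\mX$, and is unique by the a priori estimate. (Alternatively one can appeal to the Fredholm alternative, using compactness on $L^2(\Omega)$ of the scattering--gain part composed with the inverse of the coercive transport--plus--loss operator, with injectivity supplied by uniqueness.) I expect the macroscopic estimate --- especially the boundary term $\eps\int(\bv\cdot\bn_x)\,f\,(\bv\cdot\nabla_{\bx}\xi)$, which on a merely Lipschitz $\Omega_x$ requires care with the trace of $\nabla_{\bx}\xi$ --- to be the main obstacle; the remaining steps are routine.
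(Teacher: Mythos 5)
Your plan is correct in outline, but it is a genuinely different route from the paper's: the paper does not prove this theorem from scratch at all. It simply invokes the known $L^2$ well-posedness theory for stationary transport, \cite[Theorem 1.1]{egger2014lp}, which directly yields existence, uniqueness and $\|f\|_{L^2(\Omega)}\le C(\|\phi\|_{L^2(\Gamma_-)}+\|G\|_{L^2(\Omega_x)})$ (in fact a slightly stronger bound weighted by the chord length $\ell(\bx,\bv)$), and then obtains the derivative bound exactly as in your preliminary remark, by taking the $L^2$-norm of both sides of \eqref{eqn:rte}. What your construction buys is a self-contained argument (energy identity for the microscopic part, elliptic duality for $\average{f}$, Lions--Lax--Milgram plus vanishing damping for existence) that also makes the structure of the estimate transparent; what the citation buys is full generality — bounded Lipschitz $\Omega_x$, merely bounded nonnegative $\sigma_a$ — plus a clean treatment of the inflow trace/lifting issue, which you correctly flag as nontrivial for data only in $L^2(\Gamma_-)$.

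The one substantive gap in your macroscopic step is the reliance on elliptic regularity for $-\Delta_{\bx}\xi=\rho$: both the interior term $\eps\int f\,v_iv_j\partial_{ij}\xi$ (which needs $\|D^2\xi\|_{L^2(\Omega_x)}\lesssim\|\rho\|_{L^2(\Omega_x)}$, not only the gradient bound you state) and the boundary term (which needs $\|\nabla_{\bx}\xi\|_{L^2(\partial\Omega_x)}\lesssim\|\rho\|_{L^2(\Omega_x)}$) require $H^2$-type regularity that can fail on a general bounded Lipschitz domain, which is exactly the setting of the theorem. You acknowledge this, but as written the plan only covers convex or $C^{1,1}$ domains. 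Two repairs are available: either cite the weighted-$L^2$ theory as the paper does, or — since the constant here is allowed to depend on $\eps$ — replace the elliptic duality entirely by the directional Poincar\'e inequality of Lemma~\ref{lem:poin}, which holds on Lipschitz domains and controls $\|f\|_{L^2(\Omega)}$ by $\|\bv\cdot\nabla_{\bx} f\|_{L^2(\Omega)}$ plus the weighted inflow trace; combined with the equation (using $\Lop f=\Lop(f-\average{f})$ and your energy bound on the microscopic part) this closes the $L^2$ estimate with an $\eps$-dependent constant, which is precisely the mechanism the paper uses later in Lemma~\ref{lem:stab}. The remaining technicalities you mention (lifting $\phi$, passing $\delta\downarrow0$) are standard and do not affect the verdict.
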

	\begin{proof}
		Thanks to \cite[Theorem 1.1]{egger2014lp}, problem \eqref{eqn:rte} has a unique solution in $L^2(\Omega)$. Moreover, 
		$$
		\|f\|_{L^2(\Omega)} \leq C(\|\phi\|_{L^2(\Gamma_I)} + \|G\|_{L^2(\Omega_x)}).
		$$ Notice that the stability bound in \cite[Theorem 1.1]{egger2014lp} is slightly stronger than the one stated above since the $L^2$-bound there is weighted against $\ell (\bx,\bv)$, which is the length of line
		segment through $\bx$ in direction $\bv$ completely contained in $\Omega_{\bx}$. The gradient bound on $\|\bv\cdot \nabla_{\bx} f\|$ follows directly by taking $L^2$-norm on both sides of \eqref{eqn:rte} and the estimate above. 
	\end{proof}

	In the diffusive regime ($\eps \ll 1$), problem \eqref{eqn:rte} is well approximated by the elliptic equation:
	\begin{equation*}
		\average{\bv \cdot \nabla_{\bx} \Lop^{-1} \left( \frac{1}{\sigma_s} \bv \cdot \nabla_{\bx} \rho_0 \right) } = -\sigma_a \rho_0 + G\,,\quad  \rho_0 \big|_{\partial \Omega} = \xi(x)\,,
	\end{equation*}
	where $\xi$ is obtained through the boundary layer analysis \cite{bardos1984diffusion}. In many applied problems, the magnitude of $\eps$ can vary significantly across different regions; in this case it is desirable to have a solver that can deal with both kinetic ($\eps \sim \mathcal O(1)$) and diffusion ($\eps \ll 1$) regimes. Methods that fulfill this task fall into two categories, the domain-decomposition method \cite{golse2003domain,LLS15} and the asymptotic preserving method \cite{jin2000uniformly, LM08, JTH09, BPR13, LW17, SJX17,  PCQL20, TWZ21}. The former one solves different equations in different regimes and constructs an interface condition to connect them, whereas the latter seeks a unified solver that works in both regimes and therefore avoids the complication in identifying the interface location and designing interface condition. For time dependent RTE, there has been a vast literature on developing asymptotic preserving methods \cite{jin2000uniformly}, with the focus on resolving the stability issue by way of an implicit-explicit time discretization. 
	
	For stationary problems, on the other hand, specific challenge arises due to the presence of boundary layer. In general, generic numerical method may induce a numerical boundary condition in the zero $\eps$ limit that does not match the theoretical boundary condition, and then introduces errors not only on the boundary but also inside the computational domain. To this end, several efforts have been made to incorporate part of the boundary layer information into the scheme. For instance, Klar \cite{klar1998asymptotic2} constructed a boundary condition for the diffusion equation by approximating a Milne problem. Han, Tang and Ying \cite{han2014two} developed a tailored finite volume scheme that is uniform accurate up to boundary by freezing the coefficient in each cell and use special solutions to the constant coefficient equation as local basis functions. Lemou and Mehats \cite{lemou2012micro} proposed a new macro-micro decomposition by choosing the macro part such that its incoming velocity moments coincide with that of the distribution function, and therefore directly injected the exact boundary condition into the macro-micro system. When the collision kernel $K$ is isotropic, boundary layer can also be resolved by the Chandrasekhar H-function \cite{manteuffel1999boundary, yang2006numerical, jin2000uniformly}. For general collision kernel, Li, Lu and Sun \cite{LLS15} have proposed a half space solver, which then leads to an interface condition to connect different regimes.

	The primary goal in this paper is to develop a neural network method  that is uniformly stable and accurate for solving  $\eqref{eqn:rte}$ in both the kinetic ($\eps \sim 1$) and the diffusive regimes ($\eps \ll 1$). It is important to emphasize that the vanilla PINN is not able to resolve the solution when $\eps \ll 1$. In fact, one can construct examples where the neural network solution differs much from the exact solution whereas the vanilla PINN loss is small; see Section \ref{sec:failure} for such an example. 
	To overcome the instability issue, we propose a new loss function to train the neural network using the idea of macro-micro decomposition that underlies many asymptotic preserving methods. We shall show later that under some assumptions the new loss satisfies a uniform stability estimate. As a illustration, let us compare in Figure~\ref{fig: pinn_rg_comp} the errors of solutions computed using two loss functions for the toy example in Section \ref{sec:failure}  (see equation \eqref{eqn:f_toy}). One sees that if the vanila PINN loss is used the relative $L^2$-error remains $O(1)$ even when the (empirical) PINN loss already decreases to below $10^{-7}$. Whereas our new PINN loss yields that the relative $L^2$-error decreases along with the decreasing empirical loss. 
	\begin{figure}[htbp]
		\centering
		{\includegraphics[width=0.4\textwidth]{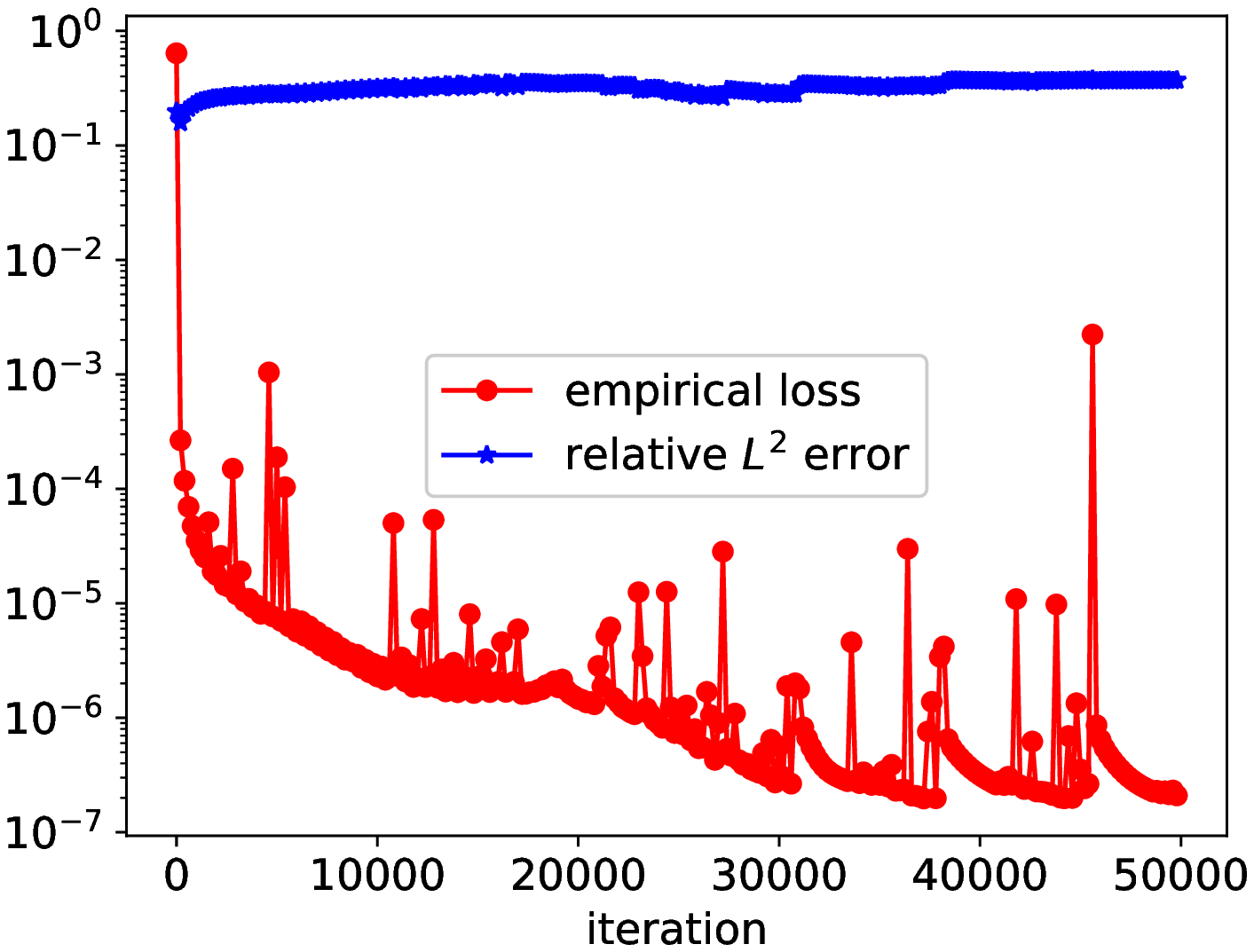}}
		{\includegraphics[width=0.4\textwidth]{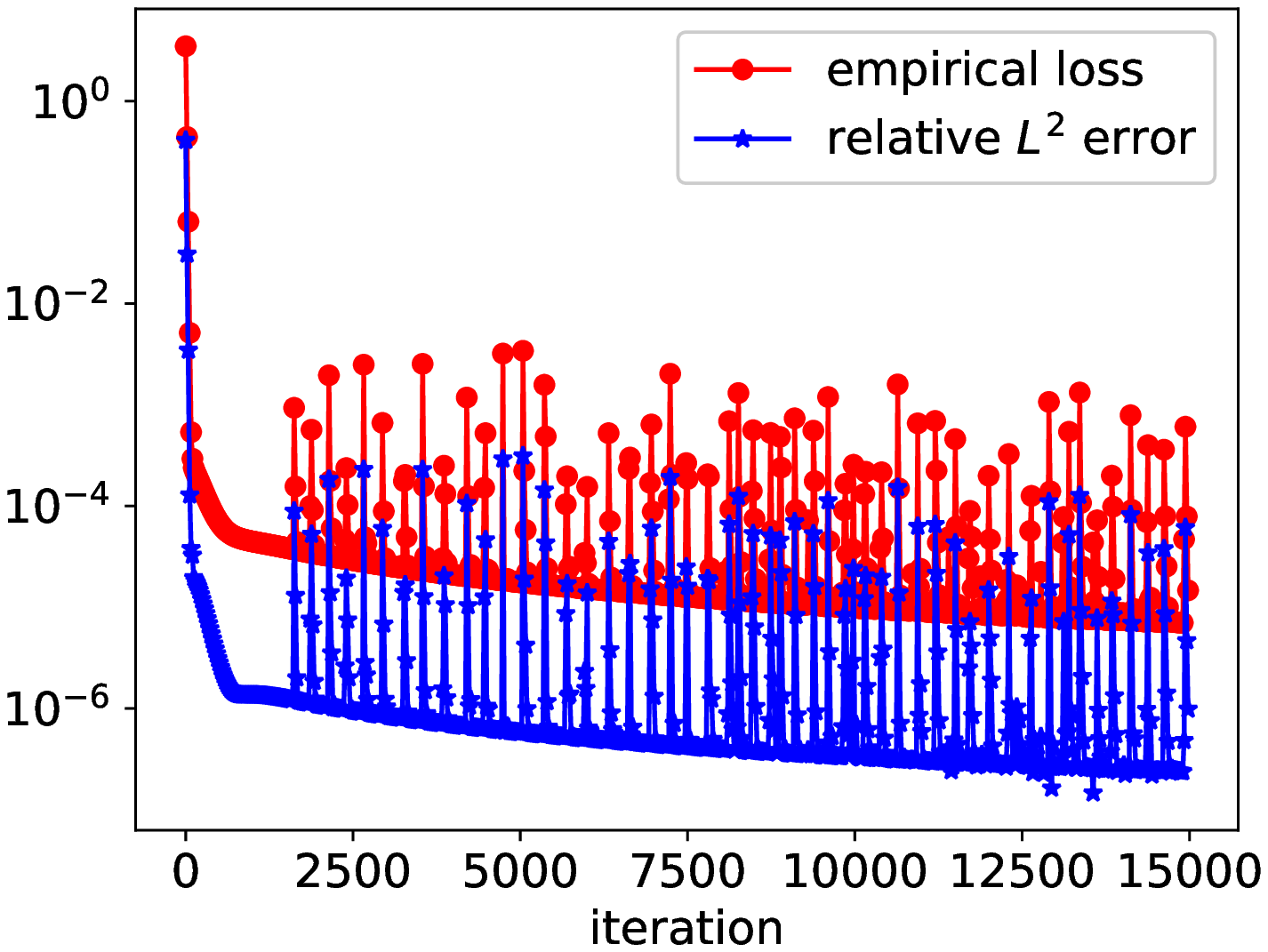}}
		\caption{Comparison of results solving $\eqref{eqn:f_toy}$ with $\eps = 10^{-3}$. Left is obtained with vanilla loss and right is with loss from macro-micro decomposition.}
		\label{fig: pinn_rg_comp}
	\end{figure}
	
	Another issue --- the boundary layer arises when the boundary data $\phi$ is variant in the $\bv$-direction. Its presence can significantly slow down the training of neural networks. To deal with this issue, we construct a boundary layer corrector that mitigates the sharp transition in the solution and therefore eases the training process significantly. In comparison with the recent work in the same vein \cite{lee2020model, liu2021deep}, our contributions are highlighted as follows:
	\begin{itemize}
		\item We design a new least-square-type loss function based on the idea of macro-micro decomposition of \eqref{eqn:rte} and prove that the new loss satisfies a stability estimate  that is uniform  with respect to small Knudsen number.

		\item When boundary layer is present in \eqref{eqn:rte}, we modify the macro-micro decomposition by incorporating a boundary layer corrector that can capture the sharp transition of the solution near the boundary.

		\item We demonstrate the accuracy and robustness of the proposed methodologies in a wide range of numerical experiments. 
	\end{itemize}
	
	%; Compared to traditional numerical methods, the PINN framework has a few favorable traits. First, it is a mesh free method and therefore more susceptible to problems with complex geometries. Second, it has great flexibility to preserve physical properties. For instance, in order to preserve positivity, one can simply apply a positive activation function at the output layer. More importantly, it does not suffer from the curse of dimensionality. Despite these advantages, directly applying PINN to \eqref{eqn:rte} suffers from severe computational challenge. On one hand, the loss function needs to be decreased significantly to $\mathcal O(\eps^2 \delta)$ so as to get an approximate solution with $\mathcal{O}(\delta)$ error. The resulting optimization problem, on the other hand, becomes increasingly ill conditioned, and makes the training extremely hard. In addition, the boundary layer presents us a function with sharp transition, which is also an obstacle in training the neural network. To overcome these difficulties, we propose to leverage ideas from asymptotic preserving method, the macro-micro decomposition in particular, to formulate the loss function, which uniformly controls the solution accuracy. When boundary layer exists, 
	Parallel to our work here, we would like to mention a recent manuscript \cite{jin2021asymptotic} that considers the time dependent case; it shares similar ideas of macro-micro decomposition, but with many details differently. 
	
	The rest of the paper is organized as follows. In the \cref{sec:diff_limit}, we recall a formal derivation of the diffusion limit of $\eqref{eqn:rte}$ and summarize the half space problem for boundary layer in multiple dimensions. In \cref{sec:pinn}, we first discuss the pitfalls of the vanilla PINN loss and then introduce new loss functions based on the  macro-micro decomposition (with and without boundary layer corrector). Theoretical stability estimates of the new loss function are proved in \cref{sec:mainthm}. Finally, we illustrate the accuracy and efficiency of our method by presenting several numerical examples in \cref{sec:num}.

	%%%%%%%%%%%%%%%%%%%%%%%%%%%%%%%%%%%%%%%%%%
	\section{The diffusion approximation for the radiative transfer equation}\label{sec:diff_limit}
	In this section, we collect some preliminary information regarding the diffusion approximation of the radiative transfer equation, both inside the domain and near the boundary. In particular, we have the following theorem.   
	
	\begin{theorem}\label{thm:diff_limit}
		Suppose $f$ solves \eqref{eqn:rte}. Then as $\eps \rightarrow 0$, $f(\bx, \bv)$ converges to $\rho_0(\bv)$, which solves 
		\begin{equation} \label{diff}
			\average{\bv \cdot \nabla_{\bx} \Lop^{-1} \left( \frac{1}{\sigma_s} \bv \cdot \nabla_{\bx} \rho_0 \right) } = -\sigma_a \rho_0 + G\,,
			\qquad \rho_0\big|_{\partial \Omega}  = \zeta(\bx)\,.
		\end{equation}
		Here $\zeta(\bx)$ at any point $\bx_\bb \in \partial_x \Omega$ is determined by
		\[
		\zeta(\bx_\bb) = \lim_{z \rightarrow \infty} f_\bl(z,\bv; \bx_\bb)\,,
		\]
		where $f_\bl(z,\bv; \bx_\bb)$ solves the half space problem: 
		\begin{equation} \label{eqn:rte_hsp}
			\displaystyle (-\bv \cdot \bn_{\bb}) \partial_z f_\bl  =  \Lop (f_\bl) \,,   \quad 
			f_\bl(0, \bv) = \phi(\bx_\bb, \bv), \quad  \bv \cdot \boldsymbol{n}_b < 0\,.
		\end{equation}
	\end{theorem}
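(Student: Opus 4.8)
The plan is to establish the result by the classical two-scale (Hilbert) expansion in the interior of $\Omega_x$ together with a boundary layer expansion near $\partial\Omega_x$, and then to upgrade the formal expansion to a genuine $L^2$ convergence statement via an $\eps$-uniform a priori estimate for the transport equation. One posits the ansatz
$$
f \;\approx\; \rho_0(\bx) + \eps f_1(\bx,\bv) + \eps^2 f_2(\bx,\bv) \;+\; \chi\,\big[\, f_\bl(z,\bv;\bx_\bb) - \zeta(\bx_\bb)\,\big] \;+\; (\text{higher order terms}),
$$
where $z = \mathrm{dist}(\bx,\partial\Omega_x)/\eps$ is the stretched normal variable, $\bx_\bb$ the nearest boundary point, and $\chi$ a smooth cutoff localizing the boundary layer, then inserts it into \eqref{eqn:rte} and collects terms by powers of $\eps$.

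For the interior part, at order $\eps^0$ one finds $\sigma_s\Lop\rho_0 = 0$, so by parts 1) and 2) of Assumption \ref{ass:L} the leading term is isotropic, $\rho_0 = \average{\rho_0}$. At order $\eps^1$ one gets $\bv\cdot\nabla_\bx\rho_0 = \sigma_s\Lop f_1$; since $\average{\bv\cdot\nabla_\bx\rho_0} = \nabla_\bx\rho_0\cdot\average{\bv} = 0$, the right side lies in $\mathcal N^\perp(\Lop)$ and part 4) gives $f_1 = \Lop^{-1}\!\big(\tfrac{1}{\sigma_s}\bv\cdot\nabla_\bx\rho_0\big)$ modulo $\mathcal N(\Lop)$. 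At order $\eps^2$, $\bv\cdot\nabla_\bx f_1 = \sigma_s\Lop f_2 - \sigma_a\rho_0 + G$; applying $\average{\cdot}$ and using $\average{\Lop f_2} = 0$ eliminates $f_2$ and yields exactly \eqref{diff}. One checks that \eqref{diff} is uniformly elliptic --- its diffusion matrix $D = -\average{\tfrac{1}{\sigma_s}\,\bv \otimes \Lop^{-1}\bv}$ is symmetric positive definite by the coercivity in part 3) together with the boundedness in part 5) --- so, once the Dirichlet datum is prescribed, $\rho_0$ exists and is unique by Lax--Milgram (using $\sigma_a \ge 0$).

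The Dirichlet datum comes from the boundary layer analysis, which is the main obstacle. Freezing the coefficients at $\bx_\bb$ and retaining leading-order terms in the stretched variable, the corrector $f_\bl(\cdot,\cdot;\bx_\bb)$ must solve the half-space Milne problem \eqref{eqn:rte_hsp} with incoming data $\phi(\bx_\bb,\cdot)$. The crucial facts one needs are that \eqref{eqn:rte_hsp} is well posed and that $f_\bl(z,\bv;\bx_\bb)$ relaxes as $z\to\infty$ to a velocity-independent constant $\zeta(\bx_\bb):=\lim_{z\to\infty}f_\bl(z,\bv;\bx_\bb)$, with $f_\bl - \zeta(\bx_\bb)$ decaying in an appropriate weighted norm; this is exactly the content of the Bardos--Santos--Sentis theory \cite{bardos1984diffusion} under the structural Assumption \ref{ass:L}. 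Matching the boundary-layer expansion with the interior one --- at $z=0$ the ansatz reduces to $\phi(\bx_\bb,\bv) + \rho_0(\bx_\bb) - \zeta(\bx_\bb)$, which must equal $\phi$, while as $z\to\infty$ it must reduce to the interior expansion --- forces $\rho_0|_{\partial\Omega_x} = \zeta$. The delicate points here are the grazing set $\{\bv\cdot\bn\approx 0\}$, the curvature of $\partial\Omega_x$ (whose tangential derivatives feed into the next-order correctors and must be controlled), and the merely Lipschitz regularity of $\Omega_x$; these are all absorbed into the weighted estimates for the half-space problem.

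Finally, one forms the approximate solution $f^{\mathrm{app}}$ as above (including enough correctors so that the leftover residual in \eqref{eqn:rte} and the mismatch on $\Gamma_-$ are $O(\eps)$ in $L^2$), and studies the error $e := f - f^{\mathrm{app}}$, which satisfies a transport equation of the same type with small data. Applying an a priori estimate to $e$ --- here one should use a bound that is uniform in $\eps$, obtained via an energy/macro-micro argument exploiting the spectral gap in part 3) of Assumption \ref{ass:L}, rather than the $\eps$-dependent bound of Theorem \ref{thm:wellpos} --- gives $\|e\|_{L^2(\Omega)} \to 0$, and hence $f\to\rho_0$ as $\eps\to 0$. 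I expect the genuinely hard step to be the boundary layer: establishing solvability and the $L^2$-type relaxation of the multi-dimensional half-space problem \eqref{eqn:rte_hsp} and carrying out the matching cleanly enough to identify $\zeta$; by comparison the interior Hilbert expansion and the stability closure are routine.
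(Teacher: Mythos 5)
Your proposal is correct and follows essentially the same route as the paper's own argument: an interior Hilbert expansion yielding \eqref{diff}, plus a half-space (Milne) boundary layer whose $z\to\infty$ relaxation supplies the Dirichlet datum $\zeta$. The only differences are cosmetic or additive --- the paper stops at the formal derivation and cites \cite{bardos1984diffusion} for the rigorous $L^2$ convergence that you sketch via residual estimates and an $\eps$-uniform stability bound, and the paper stretches the boundary variable by $z=\frac{1}{\eps}\int\sigma_s$ so that the half-space problem is exactly \eqref{eqn:rte_hsp}, whereas your $\mathrm{dist}(\bx,\partial\Omega_x)/\eps$ with frozen coefficients requires only a trivial rescaling in $z$.
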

	
	% Here we leave a formal derivation in the Appendix for completeness, and a more rigorous proof can be found in \cite{bardos1984diffusion}. 

	\begin{proof}
		Here we provide a formal derivation. Rigorous proof can be found in \cite{bardos1984diffusion}.
		Away from the boundary, consider the Hilbert expansion of $f(x,v)$:
		\[
		f(\bx,\bv) = f_0(\bx,\bv) + \eps f_1 (\bx,\bv) + \eps f_2(\bx,\bv)  + \cdots
		\]
		which inserting into \eqref{eqn:rte} leads to the following equations with like powers:
		\begin{align}
			& \mathcal{O}(1): \qquad \Lop (f_0) = 0\,, \label{eqn:o1}
			\\ & \mathcal{O}(\eps): \qquad  \bv \cdot \nabla_{\bx} f_0 = \sigma_s \Lop f_1\,, \label{eqn:o2}
			\\ & \mathcal{O} (\eps^2): \qquad \!\! \bv \cdot \nabla_{\bx} f_1 = \sigma_s \Lop f_2 - \sigma_a f_0 + G\,. \label{eqn:o3}
		\end{align}
		First \eqref{eqn:o1} implies $f_0(x,v) = \average{f_0} := \rho_0(x)$. From \eqref{eqn:o2}, according to the property of $\Lop$, since $\bv \cdot \nabla_{\bx} \rho_0 \in \mathcal N (\Lop)^\perp$, we can write $f_1 = \Lop^{-1} \left( \frac{1}{\sigma_s} \bv \cdot \nabla_{\bx} \rho_0 \right)$. Then plugging this relation to \eqref{eqn:o3} and taking average in $v$, one can show that $\rho_0$ satisfies the elliptic equation in \eqref{diff}. Consequently, we obtain that $f$ converges to $\rho_0$ as $\eps \rightarrow 0$, with $\rho_0$ solving \eqref{diff}.
		
		In general, the boundary condition for $\rho_0$ is different from $f$ due to the presence of boundary layer. Therefore, we need to conduct the matched asymptotic boundary layer analysis to obtain the correct boundary data for $\rho_0$. Our derivation follows \cite{BLP79}, see also \cite{klar1998asymptotic, klar1998asymptotic2}. For a given point $\bx_{\bb}$ on the boundary, i.e., $\bx_{\bb} \in \partial \Omega_x$, let $\bn_{\bb}$ be the outer normal direction at $\bx_\bb$, then we define locally a stretching variable $z = z(\bx; \bx_\bb) \in [0, \infty)$ in a way such that 
		\begin{equation} \label{xz}
			{  \sigma_s(\bx)} \bv \cdot \bn_\bb d_z = -\eps \bv \cdot \nabla_{\bx} \,, ~ d_z = \frac{\rd}{\rd z}\,.
		\end{equation}
		For instance, when $\sigma_s$ is independent of $x$, $z =\frac{(\bx_\bb - \bx) \cdot \bn_{\bb}}{\eps}$; when $d=1$, at the left boundary $x=x_L$, ${  z = \frac{1}{\eps} \int_{x_L}^x \sigma_s(t) \rd t}$. It is obvious that when $\bx = \bx_\bb$, $z=0$; when $\bx$ is away from $\bx_\bb$, $z \rightarrow \infty$ as $\eps \rightarrow 0$. Then along $z$ direction, \eqref{eqn:rte} reads, in the leading order of $\eps$,
		\begin{equation*} 
			\begin{cases}{}
				\displaystyle (-\bv \cdot \bn_{\bb}) \partial_z f  =  \Lop (f)   \,, \\
				f(0, \bv) = \phi(\bx_\bb, \bv), ~  \bv \cdot \boldsymbol{n}_b < 0\,.
			\end{cases}
		\end{equation*}
		The well-posedness of the above problem can be found in \cite{golse2003domain}. In particular, if $\phi(\bx_\bb, \bv) \in L^2(\boldsymbol{S}^{-}_{\bn_{\bb}}, |\bv \cdot \bn_{\bb}| \rd \bv)$, where $\boldsymbol{S}^{-}_{\bn_{\bb}} = \{  \bv \in \boldsymbol{S}^{d-1} | \bv \cdot \bn_{\bb} <0   \} $, $\eqref{eqn:rte_hsp}$ has a unique solution in $L^{\infty}(\RR_{+}; L^2(\boldsymbol{S}^{-}_{\bn_{\bb}}, |\bv \cdot \bn_{\bb}| \rd \bv))$. In addition, denote its solution as $f_\bl(z, \bv; \bx_\bb)$, then it can be shown that 
		\begin{equation}\label{eqn0914}
			f_\bl (z,\bv; \bx_\bb) \rightarrow f_\bl^\infty(\bx_\bb)\,, \quad {as}~ z\rightarrow \infty\,,
		\end{equation}
		where ${  f_\bl^\infty(\bx_\bb)}$ is a function independent of $\bv$, which gives the condition for $\rho_0$ at $\bx_\bb$, i.e., $\zeta(\bx_\bb) = f_\bl^\infty(\bx_\bb)$. The same procedure can be carried out at each point on the boundary $\partial \Omega_x$ and we therefore obtain the boundary condition $\zeta[\phi]$ for $\rho_0$.
	\end{proof}
	
	Note that, when the collision kernel is isotropic, i.e., $K(\bv, \bv') = 1$, there is an explicit relationship between $f_\bl^\infty(\bx_\bb)$ and $\phi(\bx_\bb, \bv)$, through the Chandrasekhar's H-function, see Section 2 (for dimension one) and Appendix B (for multi dimension) in \cite{golse2003domain}.

	Additionally, when the boundary condition is independent of $\bv$, that is,  $\phi(\bx,\bv) = \phi(\bx)$, $\bx \in \partial \Omega_x$, then there is no boundary layer and hence $\zeta(\bx) = \phi(\bx)$. This is seen from the fact that $f_\bl(z,\bv; \bx_\bb)\equiv \phi(\bx_\bb)$ is a solution to \eqref{eqn:rte_hsp}.

	%%%%%%%%%%%%%%%%%%%%%%%%%%%%%%%%%%%%%%%%%%%%%%
	\section{Approximation by Physics Informed Neural Networks (PINNs)}\label{sec:pinn}
	We aim to approximate the solution of \eqref{eqn:rte} with functions that are parameterized by  neural networks. Denote $f^{nn}(\theta;\bx,\bv)$ the neural network function where $\theta$ represents the set of neural network parameters including weights and biases in the neurons. In the framework of PINNs and other neural network-based approaches, one seeks the approximation $f^{nn}(\theta;\bx,\bv)$ by minimizing a loss function that is defined by the PDE problem. The vanilla PINN (population) loss  is defined as the sum of the $L^2$-misfit of the PDE and that of the boundary values: 
	\begin{equation}\label{eq:vanillapinn}
		\mE_0 (f) = \int_{\Omega} \Big|\eps \bv \cdot \nabla_{\bx} f(\bx, \bv) -  \sigma_s(\bx) \Lop f(\bx, \bv) +\eps^2 \sigma_a(\bx) f - \eps^2 G(\bx) \Big|^2 d\bx\, d\bv + \int_{\Gamma_-} |f - \phi|^2 d\bx\,d\bv\,.
	\end{equation}
	%where the $B_w$ is the weight of the boundary loss term.
	In practice, we need to approximate the integrations above and this leads to the definition of the empirical PINN loss:
	\begin{equation}\label{eq:vanillapinn2}\begin{aligned}
			\mE^N_0(f) & = \sum_{i=1}^{N_x^r} \sum_{j=1}^{N_v^r}  \Big|\eps \bv \cdot \nabla_{\bx} f(\bx_{i}^r, \bv_{j}^r) -  \sigma_s(\bx_{i}^r) \Lop f(\bx_{i}^r, \bv_{j}^r) +\eps^2 \sigma_a(\bx_{i}^r) f(\bx_{i}^r,\bv_{j}^r) - \eps^2 G(\bx_{i}^r) \Big|^2 w_{ij}^q \\
			& \qquad + \sum_{m=1}^{N_x^b} \sum_{n=1}^{N_v^b} |f(\bx_{i}^b, \bv_{j}^b)-\phi(\bx_{i}^b, \bv_{j}^b)|^2 w_{ij}^b. 
	\end{aligned}\end{equation}
	Here $\{ \bx^r_i  \}_{i=1}^{N^r_x}$, $\{ \bv^r_j  \}_{j=1}^{N^r_v}$ and  $\{ w^r_j  \}_{j=1}^{N^r_v}$, and  $\{ \bx^b_m  \}_{m=1}^{N^b_x}, \{ \bv^b_n  \}_{n=1}^{N^b_v}$ and $ \{ w^b_j  \}_{j=1}^{N^b_v}$ are the interior and boundary quadrature points and weights, respectively. See concrete choices of quadrature points and weights in Section \ref{sec:num}.  Then a neural network approximation $f^{nn}(\theta;\bx,\bv)$  is obtained by solving the minimization problem:
	$$
	\min_\theta \mE^N_0(f^{nn}(\theta;\bx,\bv)). 
	$$

	%The training data set consists of two parts: residual data set and boundary data set. In our case, in order to accurately approximate the integral operator $\Lop$ in $\bv$, we use quadrature points in sampling variable $\bv$. For spatial variable $\bx$, we simply use uniform points \lw{Wuzhe, please confirm}. \yl{Here we need to be more specific, e.g. what quadrature points used. Are we using random samples in $v$?} More particularly, we sample $N^r_x$ data points on $\bx$ and $N^r_v$ points on $\bv$, then denote them as $T^r_{\bx} = \{ \bx^r_i  \}_{i=1}^{N^r_x}$, $T^r_{\bv} = \{ \bv^r_j  \}_{j=1}^{N^r_v}$ and $T^r_{w^q} = \{ w^q_j  \}_{j=1}^{N^r_v}$ (quadrature weights). For the boundary data set, we randomly sample both $\bx$ and $\bv$, and denote the set as $T^b_{\bx} = \{ \bx^b_m  \}_{m=1}^{N^b_x}$ and  $T^r_{\bv} = \{ \bv^b_n  \}_{n=1}^{N^b_v}$.

	%We then formulate the loss function. It turned out that the loss function needs to respect the multiscale structure of the original equation, in order to make the training process more affordable. Below, we will first explain why a direct formulation of loss function based on \eqref{eqn:rte} is formidable and propose a new formulation based on a macro-micro-boundary decomposition. 

	%%%%%%%%%%%%%%%%%%%%%%%%%%%%%%%%%%%%%%%%%%%%%%%%%%%%
	\subsection{Pitfall of vanilla PINN  with small $\eps$}\label{sec:failure} 
	In this section, we would like to point out one pitfall of vanilla PINN loss \eqref{eq:vanillapinn} (or \eqref{eq:vanillapinn2}) in the case where the Knudsen number $\eps$ is small; this is illustrated in the following simple example. Consider the one dimensional boundary value problem:
	\begin{equation}\label{eqn:f_toy}
		\begin{cases}{}
			\eps v \partial_x f = \langle f \rangle - f - \eps v \, \quad x \in [0,1], v \in [-1,1]\,,\\
			f(0, v>0)= 1, \quad 
			f(1, v<0)= 0\,,
		\end{cases}
	\end{equation}
	whose analytic solution is given by $f^\ast(x,v) = 1-x$. Its vanilla PINN loss takes the form 
	\begin{equation} \label{PINN00}
		\mE_v(f) = \| \eps v \partial_x f - \average{f} + f + \eps v \|_{L^2(\Omega)}^2 + \| f(0,\cdot)-1\|^2_{L^2([0,1])} + \norm{ f(1,\cdot)}^2_{L^2([-1,0])}  \,.
	\end{equation}
	Then it is obvious that when $\eps \ll 1$, any $v$-independent function $f$ that satisfies the boundary condition, such as $f(x,v)=(1-x)^2$, and $(1-x)^3$ leads to $\mE(f) = \order(\eps^2)$. However, for those $f$ we have $\|f -f^\ast\|_{L^2(\Omega)} = \order(1)$. This shows that the vanilla PINN loss does not provide a good error indicator for the kinetic equation \eqref{eqn:rte} when $\eps$ is small. Consequently, training neural networks with the vanilla PINN loss can potentially lead to inaccurate estimation of the solution; see Figures \ref{fig: f_toy_epsi1}. 
	Here we use a fully connected neural network with 4 hidden layers and 50 neurons within each hidden layer. In computing \eqref{eq:vanillapinn2}, the parameters are chosen as $N^b_v = 60$, $N^r_x = 80$ and $N^r_v=60$.
	In fact, 
	when $\eps = 1$, as shown in Figure~\ref{fig: f_toy_epsi1}, PINN is able to give an accurate approximation to the analytic solution. However, when $\eps = 10^{-3}$, we observed in Figure~\ref{fig: f_toy_epsi1} that even the empirical loss decreases to as small as $10^{-8}$, the prediction is far away from the analytic solution. This observation motivates us to consider the macro-micro decomposition technique, which has become a standard numerical technique in solving multiscale problems.   
	
	\begin{figure}[htbp]
		\centering
		{\includegraphics[width=0.3\textwidth]{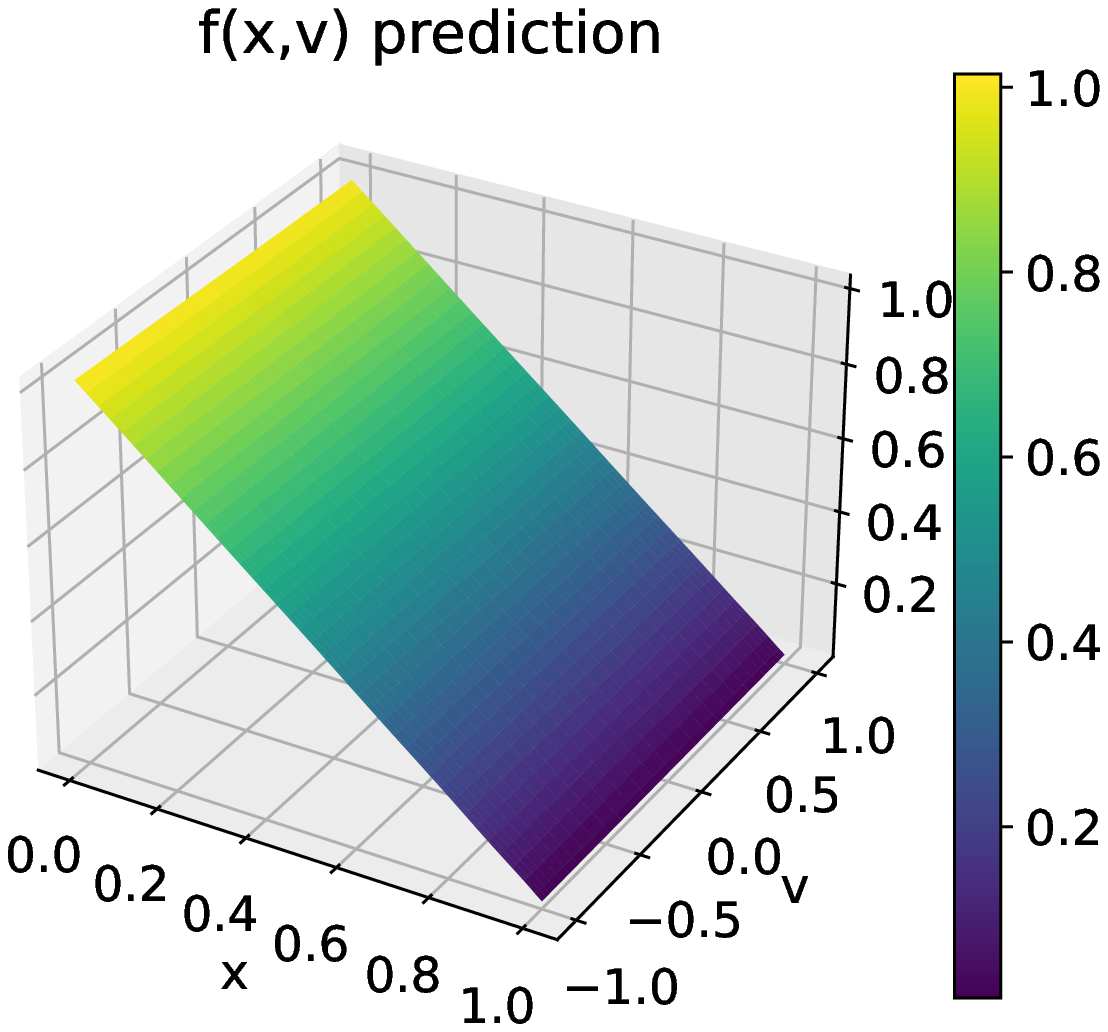}}
		{\includegraphics[width=0.3\textwidth]{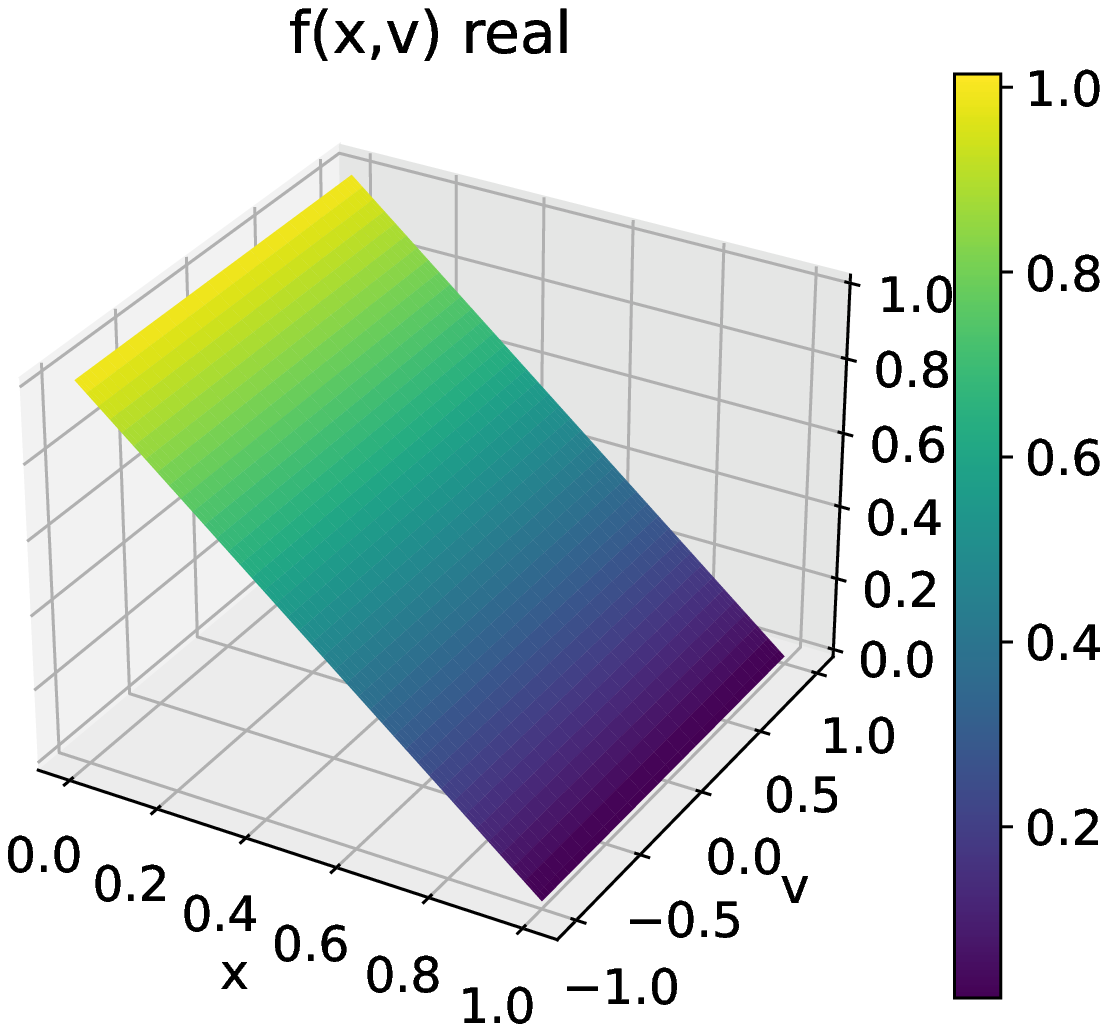}}
		%{\includegraphics[width=0.35\textwidth]{rho_epsi_1_bd_1.eps}}
		{\includegraphics[width=0.3\textwidth]{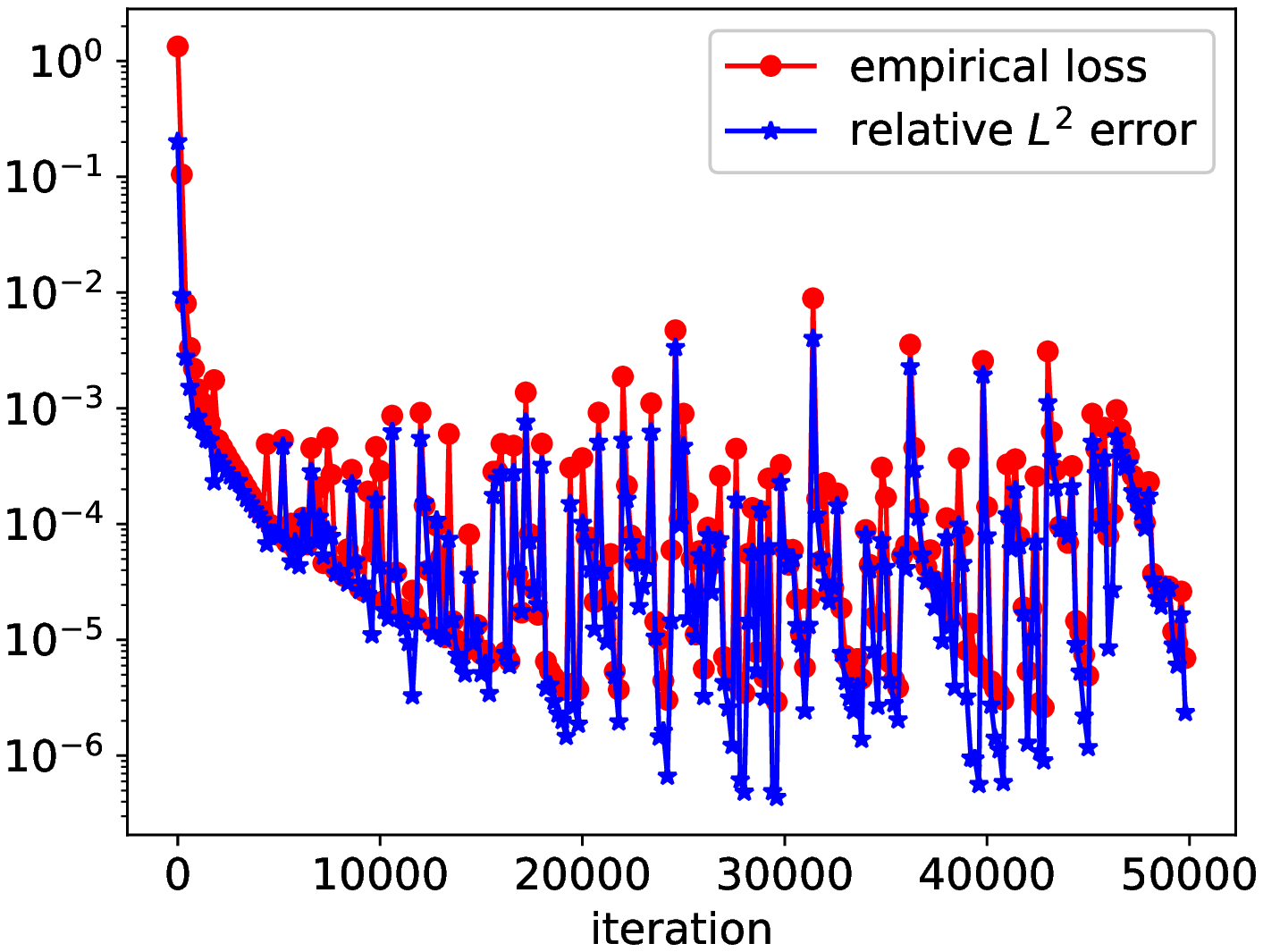}}
		{\includegraphics[width=0.3\textwidth]{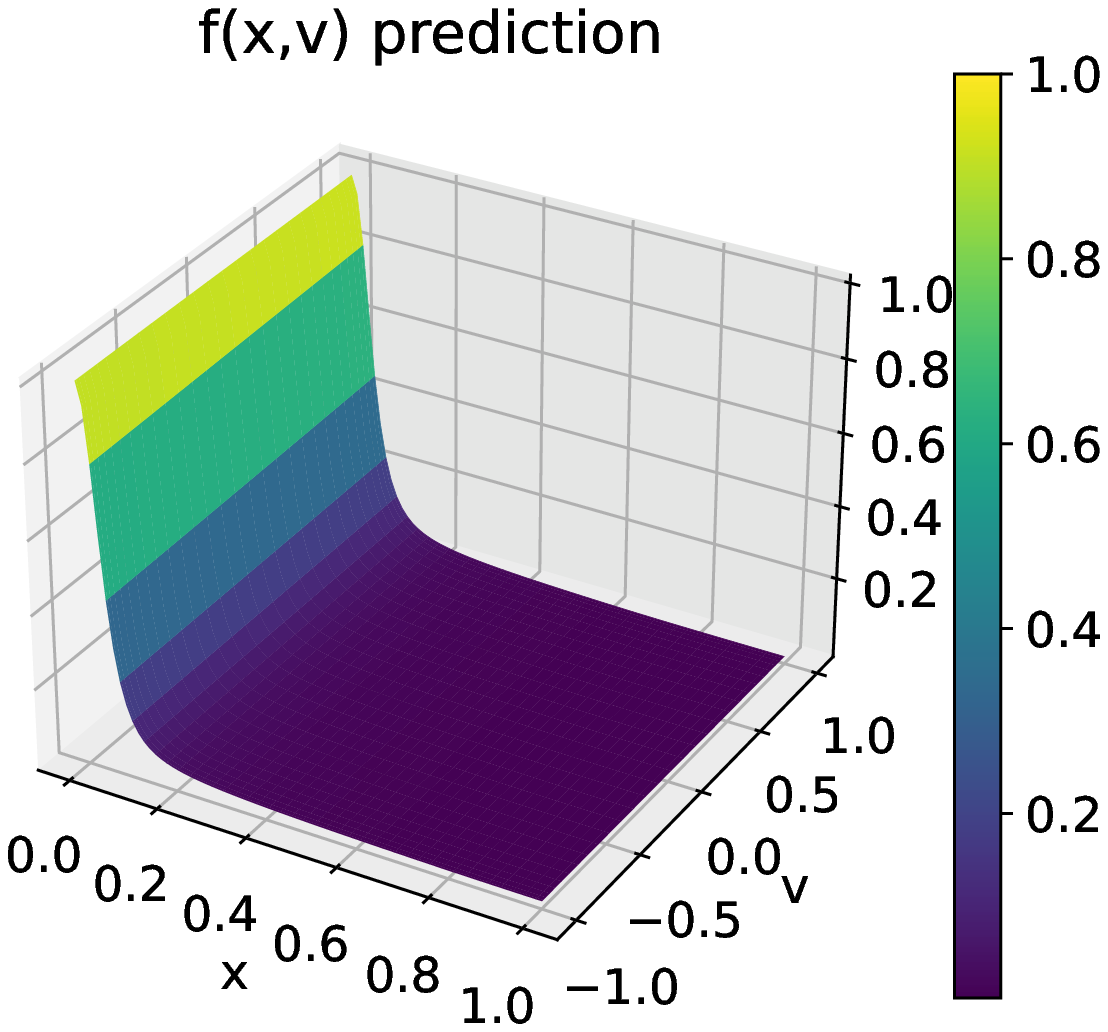}}
		{\includegraphics[width=0.3\textwidth]{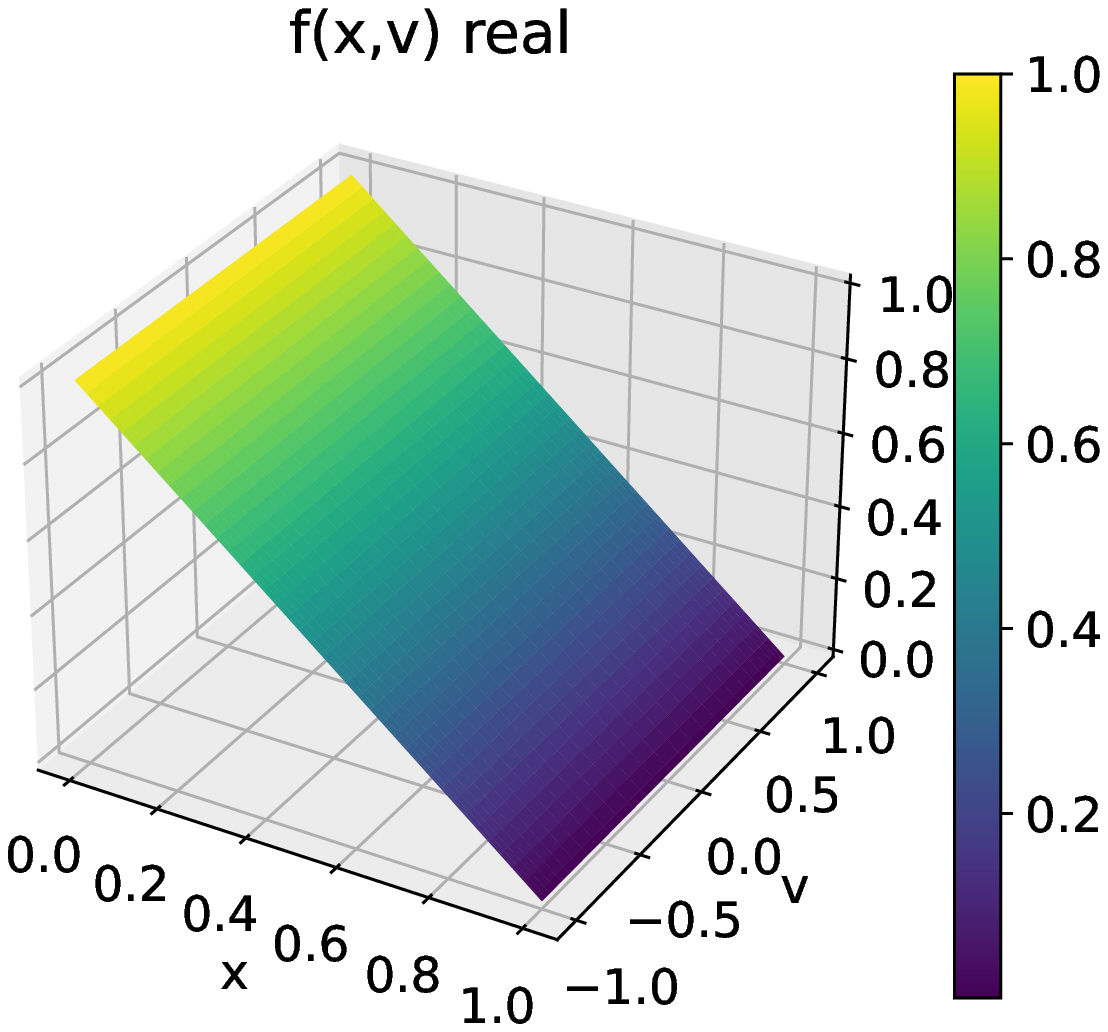}}
		%{\includegraphics[width=0.35\textwidth]{rho_epsi_zpzz1_bd_1.eps}}
		{\includegraphics[width=0.3\textwidth]{test_f_emp_epsi_zpzz1_w1_error_re.eps}}
		\caption{Computation of \eqref{eqn:f_toy} with vanilla loss \eqref{PINN00}. The top row is for $\eps = 1$ and bottom row is for $\eps =0.001$. Left column is the predicted $f(x,v)$ from neural network, middle column is the analytic solution, and the right column is the error versus iterations with Adam optimizer.}
		\label{fig: f_toy_epsi1}
	\end{figure}

	% \begin{figure}[htbp]
		% \centering
		% {\includegraphics[width=0.3\textwidth]{f_pred_epsi_zpzz1_bd_1.eps}}
		% {\includegraphics[width=0.3\textwidth]{f_real_epsi_zpzz1_bd_1.eps}}
		% %{\includegraphics[width=0.35\textwidth]{rho_epsi_zpzz1_bd_1.eps}}
		% {\includegraphics[width=0.3\textwidth]{test_f_emp_epsi_zpzz1_w1_error_re.eps}}
		% \caption{Computation of \eqref{eqn:f_toy} with vanilla loss \eqref{PINN00} for $\eps = 10^{-3}$. The left figure is the predicted $f(x,v)$ from neural network, the middle is the analytic solution, and the right is the error versus iterations with Adam optimizer.}
		% \label{fig: f_toy_epsizpzz1_1}
		% \end{figure}

	%%%%%%%%%%%%%%%%%%%%%%%%%%%%%%%%%%%%%%%%%%%%%%%%%%
	\subsection{PINN based on macro-micro decomposition}\label{sec: PINN_decomp}
	In order to resolve the issue mentioned in the last section, we propose a new loss function based on a macro-micro decomposition.
	Write 
	\begin{equation} \label{decompf0}
		f = \rho(\bx) + \eps g(\bx, \bv), \quad \text{with}~ \rho = \average{f}, ~ \average{g} = 0\,,
	\end{equation}
	then \eqref{eqn:rte} can be decomposed into
	\begin{equation}\label{eq:mmd}
		\begin{cases}{}
			\average{\bv \cdot \nabla_{\bx} g} =  -\sigma_a \rho + G  , 
			\\ \bv \cdot \nabla_{\bx} (\rho + \eps g) - \eps \average{\bv\cdot \nabla_{\bx} g} = \sigma_s \Lop g - \eps^2 \sigma_a g  , 
			\\ \rho + \eps g \big|_{\Gamma_-} = \phi \,.
		\end{cases}
	\end{equation}
	Instead of \eqref{eq:vanillapinn}, we propose the following loss function:
	\begin{equation}\label{eq:loss0}
		\begin{aligned}
			\mE(f) & := \mE(\rho, g)\\
			& = \|\average{\bv \cdot \nabla_{\bx} g} + \sigma_a \rho \!-\! G\|_{L^2(\Omega)}^2 + \|\average{g}\|_{L^2(\Omega)}^2+ \|\rho+\eps g - \phi\|_{L^2(\Gamma_-)}^2
			\\ & \quad  + \|\bv \cdot \nabla_{\bx} (\rho + \eps g)-\eps\average{ \bv\cdot\nabla_{\bx} g}  \!-\! \sigma_s \Lop g  +\eps^2 \sigma_a g \!-\! \eps G\|_{L^2(\Omega)}^2 \,.
		\end{aligned}
	\end{equation}
	% As with \eqref{eq:vanillapinn}, in practice one seeks an approximate solution by minimizing the empirical loss 
	% \begin{equation}\label{eq:newpinn}\begin{aligned}
			% \mE^N(f) &:= \mE^N(\rho,g) \\
			%  &= \sum_{i=1}^{N_x^r} \sum_{j=1}^{N_v^r}  \Big|\average{\bv \cdot \nabla_{\bx} g}_{N_v^r}(\bx_{i}^r) + \sigma_a(\bx_{i}^r) \rho(\bx_{i}^r) - G(\bx_{i}^r) \Big|^2 w_{ij}^q \\
			%  &\qquad + \sum_{i=1}^{N_x^r} \sum_{j=1}^{N_v^r}  \Big|\bv_{j}^r \cdot (\nabla_{\bx} \rho(\bx_{i}^r) + \eps\nabla_{\bx}  g(\bx_{i}^r,\bv_{j}^r))-\eps\average{ \bv\cdot\nabla_{\bx} g}_{N_v^r}(\bx_{i}^r)  - \sigma_s (\bx_i^r) \Lop g(\bx_i^r, \bv_j^r)  \\
			%  & \qquad + \eps^2 \sigma_a(\bx_i^r) g(\bx_{i}^r,\bv_{j}^r)) - \eps G(\bx_i^r)\Big|^2 w_{ij}^q + \sum_{i=1}^{N_x^r}   \Big|\average{g}_{N_v^b} (\bx_i^r) \Big|^2 w_i^q\\
			%  & \qquad + \sum_{m=1}^{N_x^b} \sum_{n=1}^{N_v^b} |f(\bx_{m}^b, \bv_{n}^b)-\phi(\bx_{m}^b, \bv_{n}^b)|^2 w_{mn}^b, 
			% \end{aligned}\end{equation}
	% where $\average{h}_{N^r_b}(\bx)$ is an approximation of the average $\average{h}(\bx)$  by some quadrature rule
	% $$
	% \average{h}_{N^r_b}(\bx) = \sum_{j=1}^{N^r_v} h(\bx, \bv_j^r) w_j^r.
	% $$
	
	Now let us revisit the example $\eqref{eqn:f_toy}$. Applying the decomposition \eqref{eq:mmd} leads to 
	\begin{equation} \label{eqn:rg_toy}
		\begin{cases}{}
			\langle  v \partial_x g \rangle = 0 \,, \\
			v \partial_x (\rho + \eps g) = -g - v \,, \\
			\rho(0) + \eps g(0, v>0)= 1, ~ \rho(1) + \eps g(1, v<0) = 0 \,,
		\end{cases}
	\end{equation}
	which gives  the following loss function 
	\begin{equation}\label{eqn0902}
		\begin{aligned}
			\mE(f) &  = \|\average{v\partial_x g} \|_{L^2(\Omega)}^2 + \|v\partial_x (\rho + \eps g)+ g + v\|_{L^2(\Omega)}^2 \\
			& \quad + \int_0^1(\rho (0)+\eps g(0,v) - 1)^2 \rd v 
			+ \int_{-1}^0(\rho (1)+\eps g(1,v))^2 \rd v.
		\end{aligned}
	\end{equation}
	Note that we have eliminated the term $\|\average{g}\|_{L^2(\Omega)}^2 $ as $\average{g}=0$ is guaranteed by the second equation in \eqref{eqn:rg_toy}. With the new loss function \eqref{eqn0902}, we get a good approximation to the analytic solution for $\eps=10^{-3}$, see Figure~\ref{fig: rg_toy_epsizpzz1}.
	\begin{figure}[htbp]
		\centering
		{\includegraphics[width=0.3\textwidth]{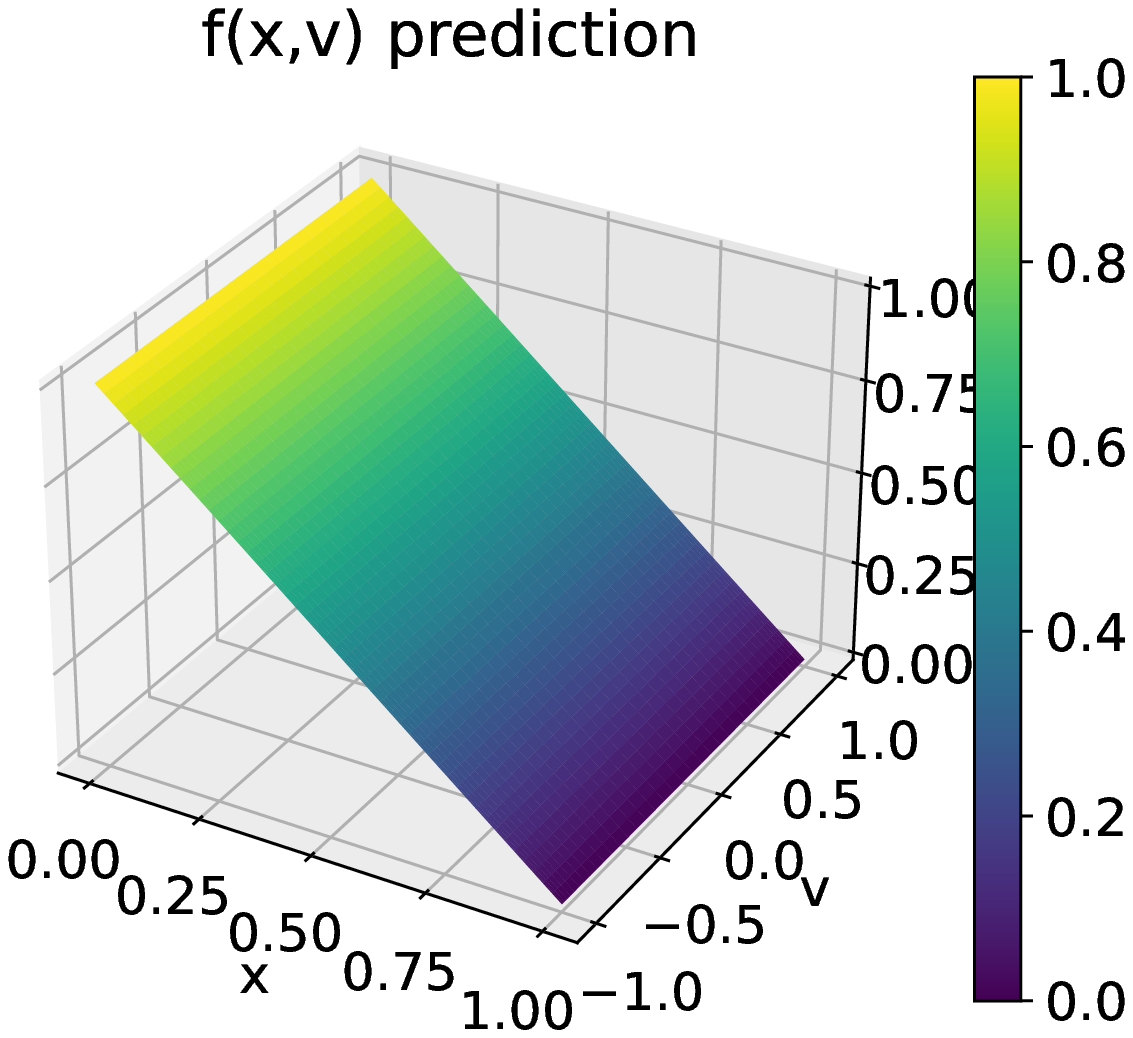}}
		{\includegraphics[width=0.3\textwidth]{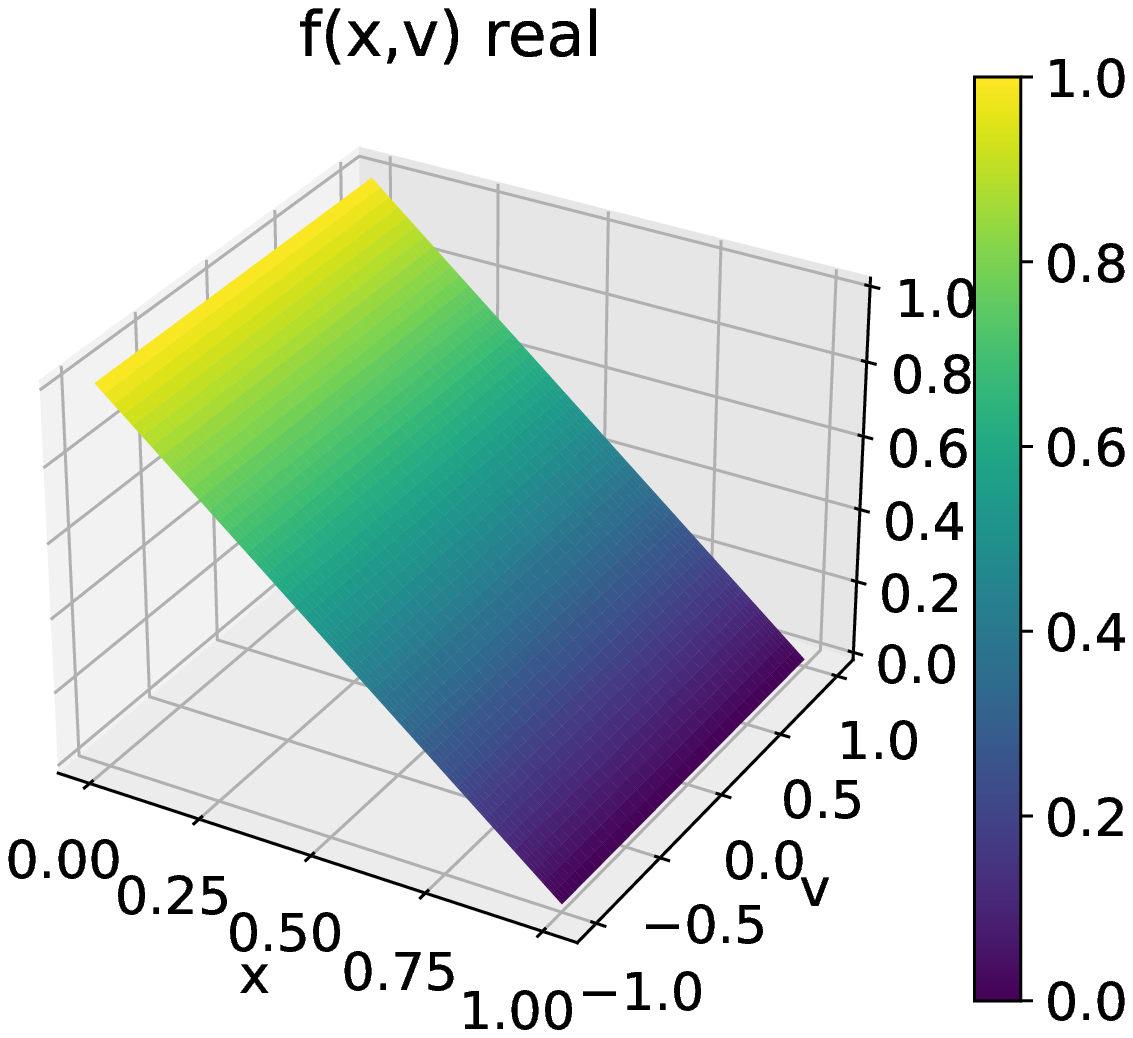}}
		% {\includegraphics[width=0.35\textwidth]{test_rg_epsizpzz1_rho.eps}}
		{\includegraphics[width=0.3\textwidth]{test_rg_emp_re.eps}}
		\caption{Computation of \eqref{eqn:f_toy} with new loss \eqref{eqn0902} for $\eps = 10^{-3}$. 
			%Here we use a fully connected neural network with 4 hidden layers and 50 neurons within each hidden layer. Numerical parameters are $N_x = 50$ and $N_v=60$. 
			The left figure is the predicted $f(x,v)$ from neural network, the middle is the analytic solution, and the right is the error versus iterations with Adam optimizer.}
		\label{fig: rg_toy_epsizpzz1}
	\end{figure}

	%%%%%%%%%%%%%%%%%%%%%%%%%%%%%%%%%%%%
	\subsection{Boundary layer corrector}
	The example \eqref{eqn:f_toy} we have mentioned thus far is with homogeneous in $\bv$ boundary condition, and the PINN loss \eqref{eq:loss0} works just fine. However, when boundary value $\phi$ depends on $\bv$, the boundary layer will arise, which brings in additional challenge as one needs to approximate a fast varying function. 
	
	We illustrate this difficulty through an example. Consider
	\begin{equation} \label{eqn:f_toy_2}
		\begin{cases}{}
			\eps v \partial_x f = \langle f \rangle - f \,, \\
			f(0, v>0)= 5 \sin (v), \quad 
			f(1, v<0)= 0
		\end{cases}
	\end{equation}
	with $\eps = 10^{-3}$.
	Its macro-micro decomposition has the form
	\begin{equation} \label{eqn:rg_toy_2}
		\begin{cases}{}
			\langle v \partial_x g \rangle = 0 \,, \\
			v \partial_x (\rho + \eps g) = -g \,, \\
			\rho(0) + \eps g(0, v>0)= 5 \sin (v), \quad \rho(1) + \eps g(1, v<0) = 0\,.
		\end{cases}
	\end{equation}
	On the left boundary at $x=0$, one sees that, $\rho(0)$ takes a value independent of $v$, and leaves $\eps g(0, v>0)$ of $\mathcal{O}(1)$ magnitude, and therefore $g(0,v>0)$ is of $\mathcal{O}(1/\eps)$ magnitude. However, inside the domain, $g$ is of order $\mathcal{O}(1)$ according to the second equation in $\eqref{eqn:rg_toy_2}$, thus a sharp transition on $g$ at the left boundary is expected.

	To see how such a sharp transition affects the neural network approximation, we now apply the loss function \eqref{eq:loss0} to \eqref{eqn:rg_toy_2} to get
	\begin{equation}\label{eqn0913}
		\begin{aligned}
			\mE(f) &  = \|\average{v\partial_x g} \|_{L^2(\Omega)}^2 + \|v \partial_x (\rho + \eps g)+g \|_{L^2(\Omega)}^2\\
			& \qquad + B_{w,0} \int_0^1 (\rho(0)+\eps g(0, v) - 5 \sin(v))^2 \rd v + B_{w,1} \int_{-1}^0 (\rho(1) + \eps g(1,v))^2 \rd v\,,
		\end{aligned}
	\end{equation}
	and collect the results in Figure~\ref{fig: rg_toy_2_bl_w1} with $B_{w,0} = B_{w,1} = 1$. 
	As displayed, the empirical loss remains large even after 50,000 iterations, and the $f$ prediction is still far off the reference solution, which is plotted in Figure~\ref{fig: hsp_1d_bl_ref} with finite difference method on a non-uniform mesh.
	\begin{figure}[htbp]
		\centering
		{\includegraphics[width=0.4\textwidth]{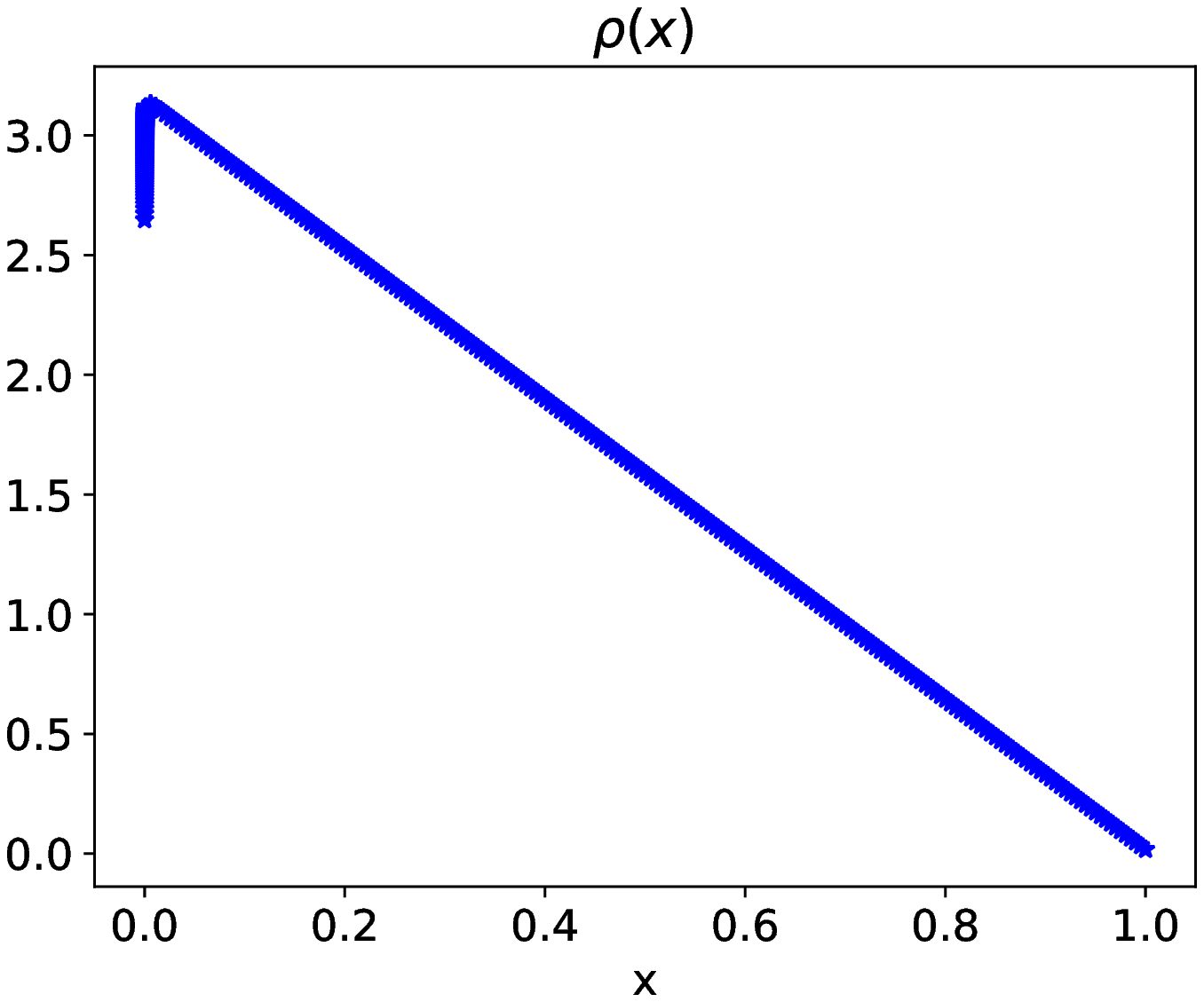}}
		{\includegraphics[width=0.4\textwidth]{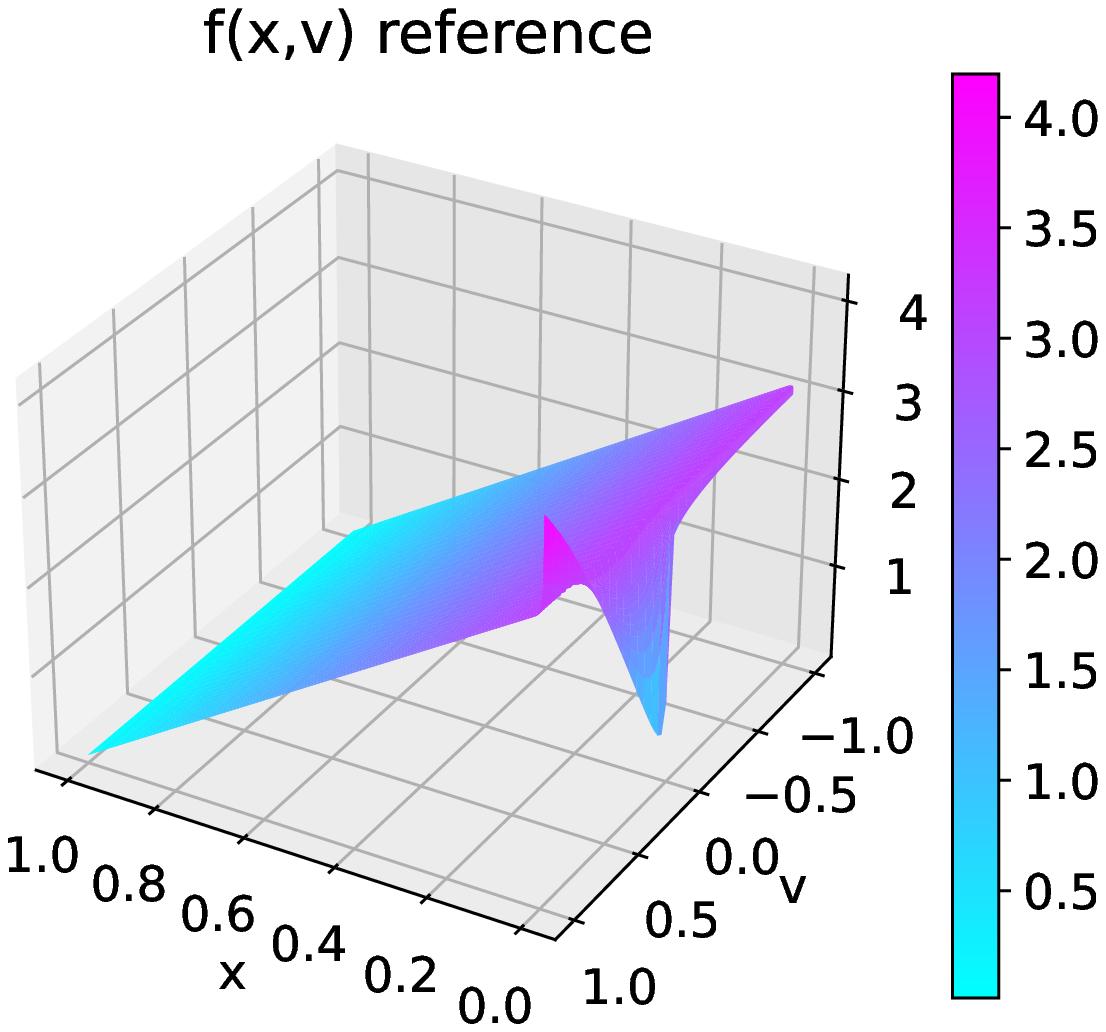}}
		\caption{Reference solution of \eqref{eqn:f_toy_2} computed by a finite difference method with nonuniform mesh.}
		%, where $N_x^1 = 150$, $N_x^2=300$, $N_v = 50$ The left is $\rho(x)$ reference, and right is $f(x,v)$ reference.}
	\label{fig: hsp_1d_bl_ref}
\end{figure}

We noticed that, the dominated loss that hinders the convergence is the boundary loss in \eqref{eqn0913}, due to the presence of boundary layer, which is intrinsically harder to approximate. Therefore, we tried to put more emphasize on the boundary term by increasing the weight $B_{w,0}$ from $1$ to $1/\eps = 10^3$, but the result is unfortunately barely improved, see the left plot of Figure~\ref{fig: rg_toy_2_bl_r2}. A more sophisticated dynamics re-weighting \cite{wang2020and, wang2020understanding} might improve the performance, but adjusting the weight appropriately seems to be very artificial and nontrivial. 
Another typical way of dealing with functions with sharp transition is to use non-uniform mesh and put more points near the fast transition region. This technique works well for grid based method, but not for our case. In fact, we have tried to assign $150$ uniform points inside the boundary layer $[0, \eps]$ (from the reference solution, we observe that the thickness of the boundary layer is $\eps$ in this example) and $50$ points in the rest of the domain $(\eps, 1]$, but the result is still unsatisfactory, see the right plot of Figure~\ref{fig: rg_toy_2_bl_r2}.

\begin{figure}[htbp]
	\centering
	{\includegraphics[width=0.4\textwidth]{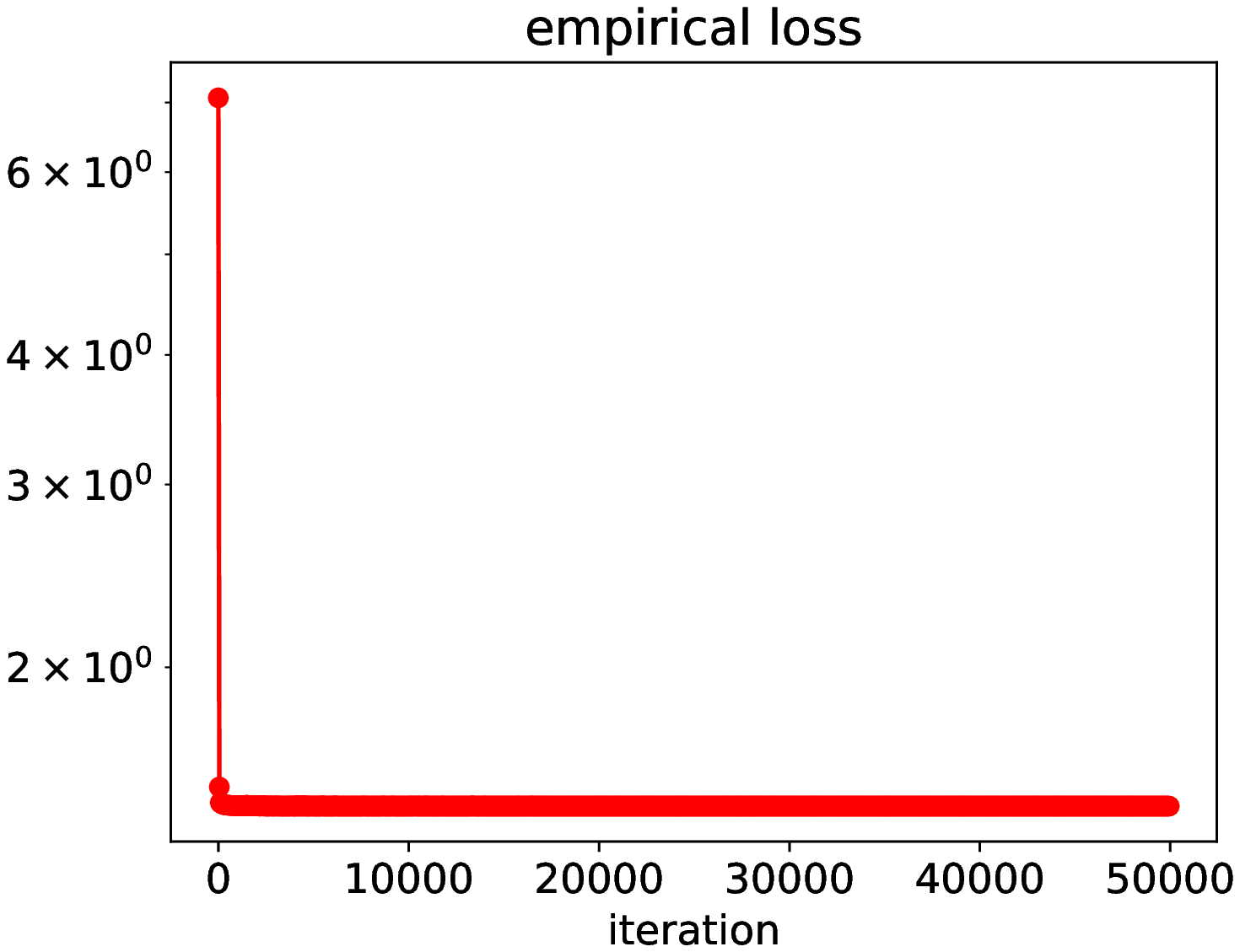}}
	%{\includegraphics[width=0.3\textwidth]{g_pred_bd_1.eps}}
	{\includegraphics[width=0.4\textwidth]{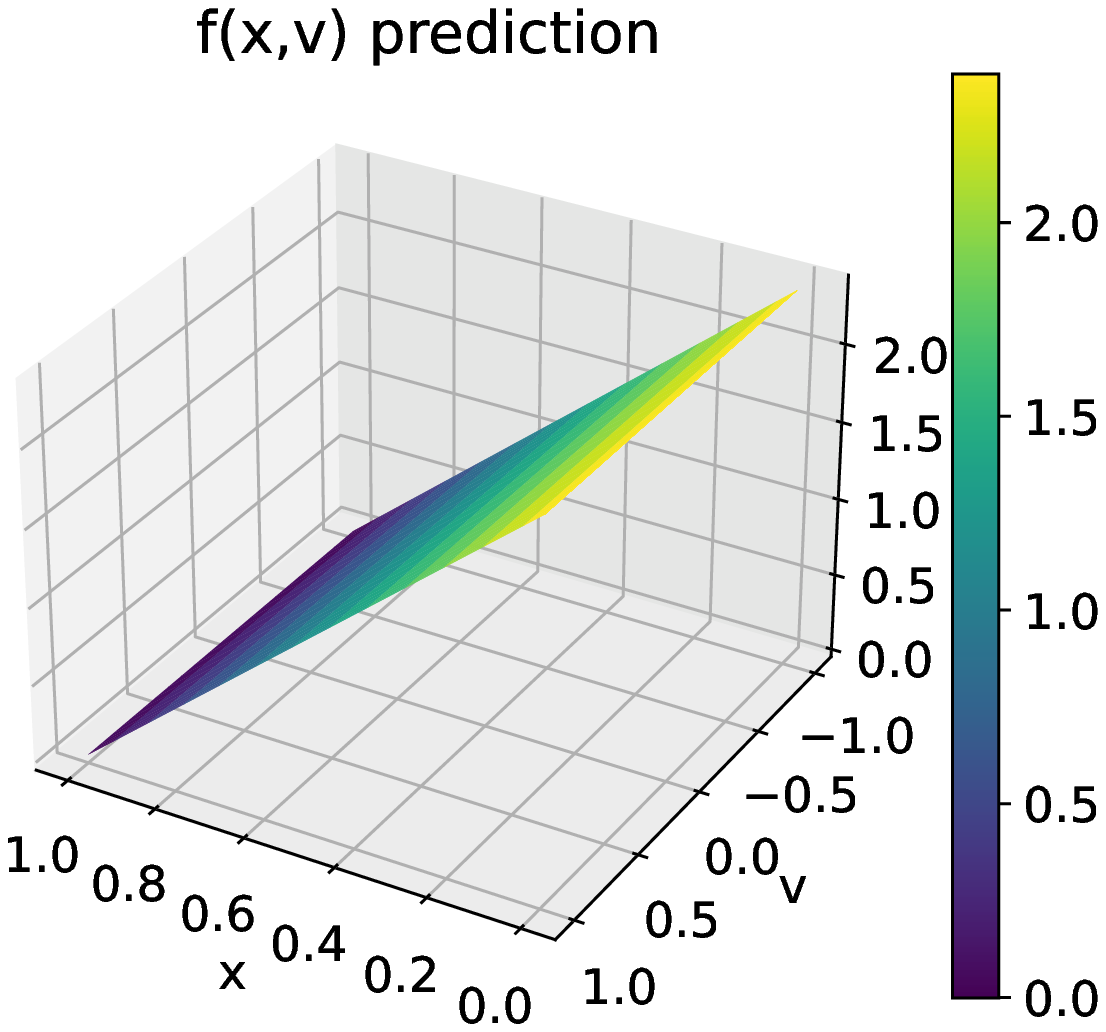}}
	\caption{Computation of \eqref{eqn:f_toy_2} with \eqref{eqn0913} and $B_{w,0} = B_{w,1} = 1$ for $\eps=10^{-3}$. Here we use a fully connected neural network with 4 hidden layers and 50 neurons within each hidden layer, and $N^b_v = 60$, $N^r_x = 80$ and $N^r_v=60$ in computing the empirical loss. The left is  the empirical loss versus iteration, and right is the prediction of $f(x,v)$. }
	\label{fig: rg_toy_2_bl_w1}
\end{figure}

\begin{figure}[htbp]
	\centering
	{\includegraphics[width=0.35\textwidth]{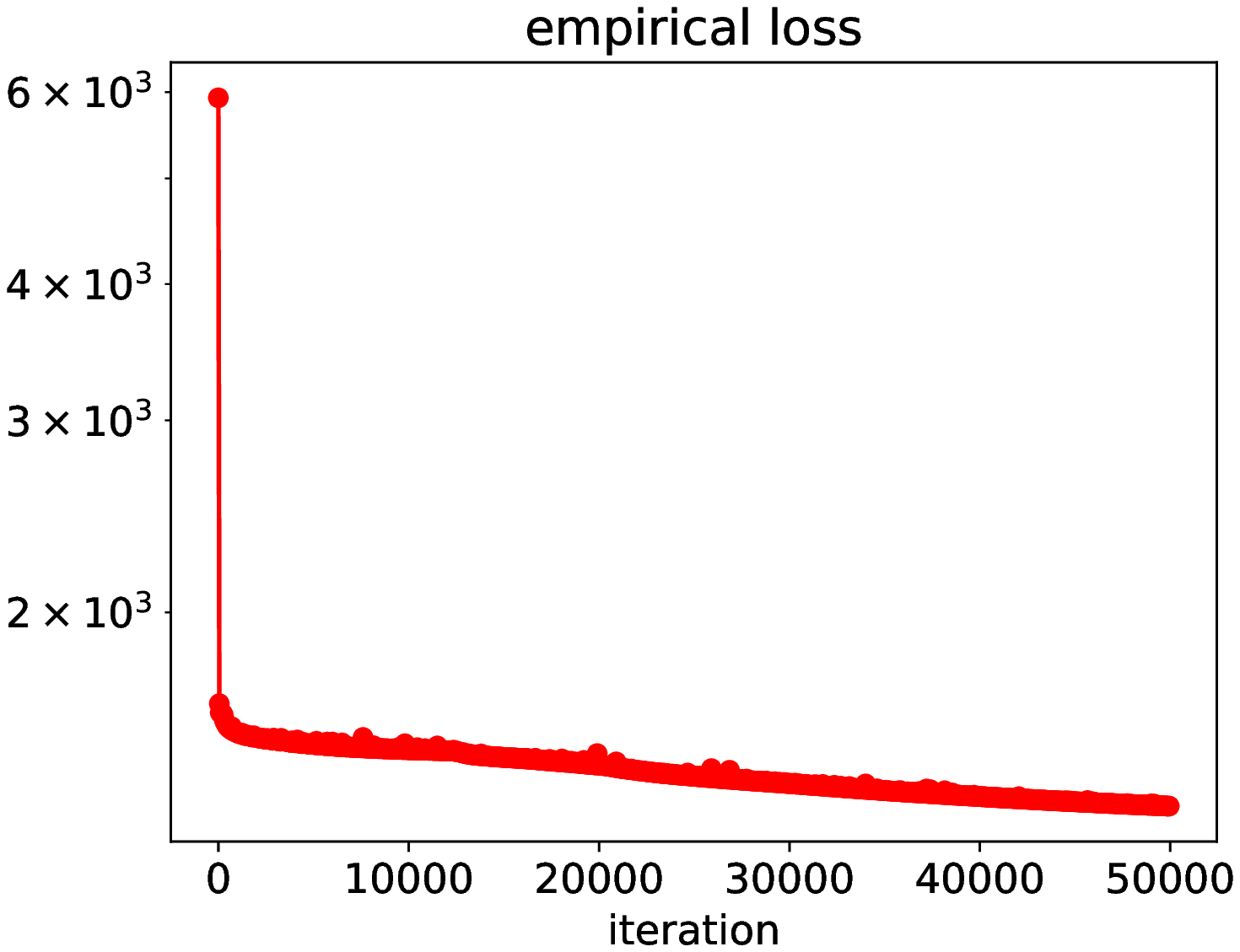}}
	{\includegraphics[width=0.35\textwidth]{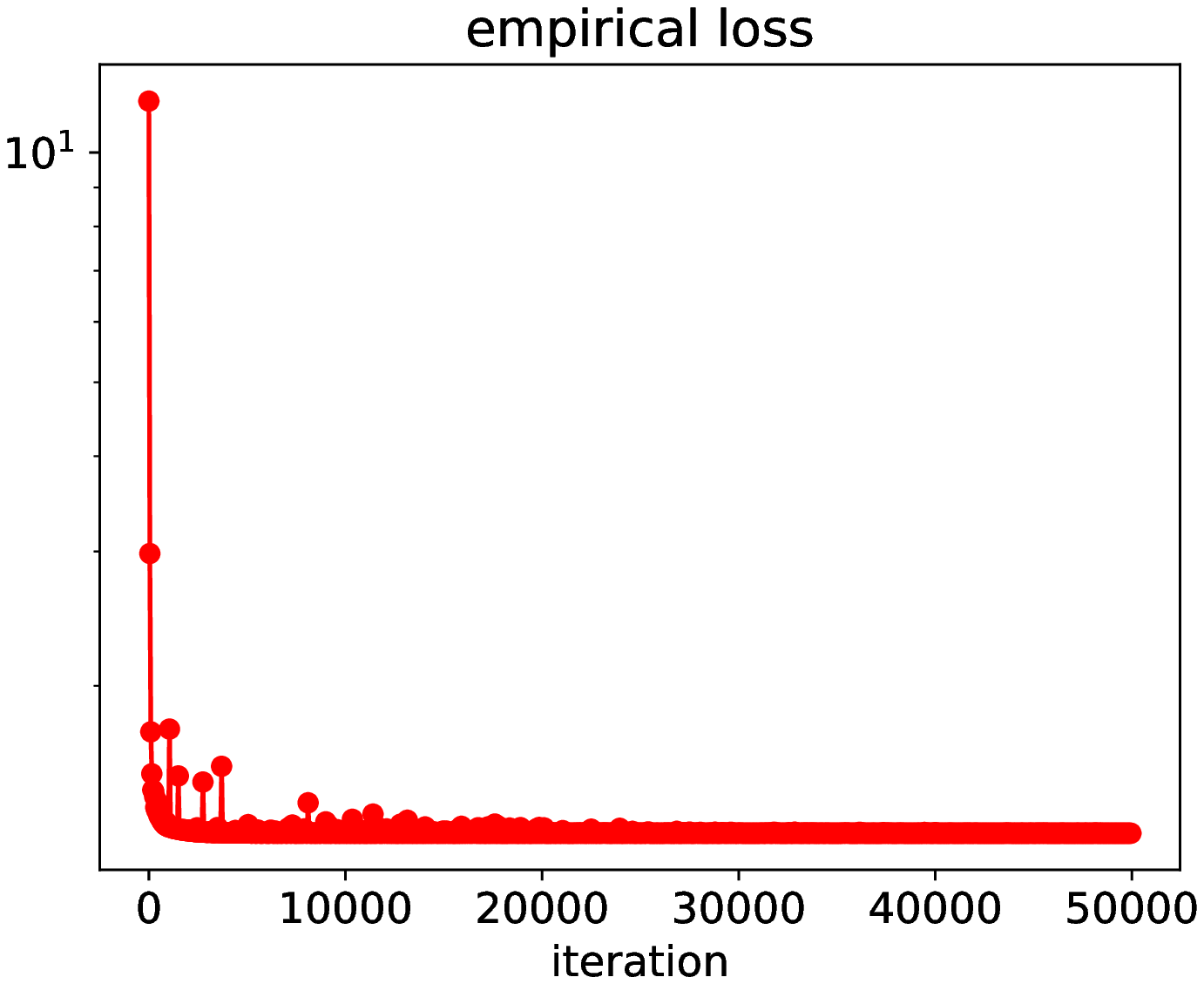}}
	\caption{Computation of \eqref{eqn:f_toy_2} with $\eps=10^{-3}$, using neural network approximation with 4 hidden layers and 50 neurons within each hidden layer. Left: using the loss function \eqref{eqn0913} with  $B_{w,0} =10^3$ and $B_{w,1} = 1$. And $N^b_v = 60$, $N^r_x = 80$ and $N^r_v=60$ in computing the empirical loss.  Right:  { using the loss function \eqref{eqn0913}} with  $B_{w,0} =B_{w,1} = 1$. And $N^b_v = 60$, $N^r_{x1} = 150$ in $(0, \eps)$, $N^r_{x2} = 50$ in $( \eps, 1)$ and $N^r_v=60$ in computing the empirical loss. }
	\label{fig: rg_toy_2_bl_r2}
\end{figure}

Therefore, we propose a new decomposition that includes a boundary layer corrector. In particular, we decompose $f$ as 
\begin{equation*} %\label{decompf}
	f(\bx, \bv) = \tr(\bx) + \eps g(\bx, \bv) + \Gamma(\bx, \bv)\,, \quad \text{with}~ \average{g} = 0,~ \tr(\bx) = \average{f(\bx, \bv) - \Gamma(\bx, \bv)}\,.
\end{equation*}
Compared to \eqref{decompf0}, the main difference lies in the boundary layer corrector $\Gamma(\bx, \bv)$, which is obtained by solving the half space problem. More precisely, consider a change of variable $\Psi_\eps$:
\[
\Psi_\eps (\bx): \bx \in \Omega_x \subset \RR^d \mapsto (z, \bx_\bb), ~ z\in [0, \infty), ~\bx_\bb \in \partial \Omega_x\,,
\]
where $z$ is chosen according to \eqref{xz}, and thus $\Psi_\eps$ depends on $\eps$. For instance, when $\eps \rightarrow 0$ and $\bx \notin \partial \Omega_x$, $z=\infty$. Let $f_\bl(z,\bx_\bb, \bv)$ be the solution to \eqref{eqn:rte_hsp}, then $\Gamma(\bx, \bv)$ is obtained by
\begin{align*}
	\Gamma(\bx,\bv) ={ f_\bl(\Psi_\eps(\bx), \bv) - f_\bl(\Psi_{\eps =0}(\bx),\bv)} \,.
\end{align*}
Consequently, $\Gamma$ carries over the sharp transition part of $f$, and leaves the remaining $\tilde \rho$ and $g$ smooth and {  easily } approximated by neural networks. In particular, $\tilde \rho$ and $g$ solve
\begin{equation}\label{eq:mmdBL}
	\begin{cases}{}
		\average{\bv \cdot \nabla_{\bx} g} + \frac{1}{\eps} \average{\bv \cdot \nabla_\bx \Gamma} =  -\sigma_a (\tilde\rho + \average{\Gamma}) + G  , 
		\\ \bv \cdot \nabla_{\bx} (\tilde \rho+ \Gamma  + \eps g) - \eps \average{\bv\cdot \nabla_{\bx} g} - \average{\bv \cdot \nabla_\bx \Gamma} = \sigma_s \Lop g + \frac{\sigma_s}{\eps} \Lop \Gamma  - \eps^2 \sigma_a g - \eps \sigma_a (\Gamma-\average{\Gamma}), 
		\\ \tilde \rho + \Gamma + \eps g \big|_{\Gamma_-} = \phi \,.
	\end{cases}
\end{equation}

In practice, $\Psi_\eps$ has an explicit form for domains with special geometry. Below we list special cases both in 1D and 2D domain.

\subsubsection{$\Gamma(x,v)$ in 1D}
Consider $\eqref{eqn:rte}$ in one dimensional domain with $x \in [0,1]$ and $v \in [-1,1]$:
\begin{equation}\label{eqn:rte_1d}
	\begin{cases}{}
		\displaystyle \eps v \partial_x f =  \sigma_s(x) \Lop (f)- \eps^2 \sigma_a(x) f+ \eps^2 G(x)  \,, \\
		f(0, v>0) = \phi_L(v) \,, \quad 
		f(1, v<0) = 0 .
	\end{cases}
\end{equation}
Without loss of generality, we only let boundary layer appears on the left, as the one on the right shall be treated in exactly the same way. Define the stretch variable 
$
z = \frac{1}{\eps}\int_0^x \sigma_s(s) \rd s  \in [0, \infty)
$
and let $f_\bl(z,v)$ solves 
\begin{equation} \label{eqn:hsp_1d1}
	\begin{cases}{}
		v \partial_z f_\bl(z,v) = \Lop (f_\bl) , \quad 
		\\ f_\bl (0, v)= \phi_L(v), ~v \in (0, 1].
	\end{cases}
\end{equation}
Then $\Gamma$ is obtained via
\begin{equation} \label{eqnGamma1}
	\Gamma(x,v) = f_\bl(\frac{1}{\eps} \int_0^x \sigma_s(s) \rd s, v) - f_\bl^\infty\,,
\end{equation}
where $f_\bl^\infty= \lim_{x\rightarrow \infty} f_\bl(x,v)$. 

In practice, we cannot solve \eqref{eqn:hsp_1d1} on an infinite domain. Instead, we pick a large enough number $Z$, and impose an additional condition 
\begin{equation} \label{vBL}
	\average{v f_\bl(z,\cdot)} =0\,, \quad \text{ for any } z\,.
\end{equation}
Indeed, taking average of $\eqref{eqn:hsp_1d1}$, one gets $\partial_z \average{v f_\bl} =0$, which implies that $\average{v f_\bl}$ is a constant. Noting from \eqref{eqn0914}, $f_\bl^\infty$ is independent of $v$, hence $\average{v f_\bl(\infty, v)} = 0$. Therefore \eqref{vBL} generally holds. Additionally, according to the Lemma 2.1 in \cite{golse2003domain}, $f_\bl(x,v)$ converges to the $f_{BL}^\infty$ exponentially fast, and therefore a moderate value of $Z$ shall be sufficient. In sum, we utilize the following loss function to obtain the solution to \eqref{eqn:hsp_1d1}: 
\begin{equation*}
	\mE(f_\bl) = \| v \partial_z f_\bl(z, v) - \Lop(f_\bl) \|^2_{L^2(\Omega)} + \| \average{v f_\bl} \|^2_{L^2(\Omega_z)} + \| f(0,v) - \phi_L(v) \|^2_{L^2(\Gamma_{-})}\,,
\end{equation*}
where $\Omega:=[0,Z] \times [-1, 1]$ and $\Gamma_{-}=(0,1]$.

Note that since $f_\bl^\infty$ is a constant and $f_\bl$ solves \eqref{eqn:hsp_1d1}, $\Gamma$ obtained from \eqref{eqnGamma1} should satisfy 
\[
\eps v \partial_x \Gamma = \sigma_s(x) \Lop(\Gamma) \,,
\]
and therefore the $(\tilde \rho, g)$ system \eqref{eq:mmdBL} simplifies to 
\begin{equation} \label{rhog1D}
	\begin{cases}{}
		\average{\bv \cdot \nabla_{\bx} g}  =  -\sigma_a (\tilde\rho + \average{\Gamma}) + G  , 
		\\ \bv \cdot \nabla_{\bx} (\tilde \rho  + \eps g) - \eps \average{\bv\cdot \nabla_{\bx} g} = \sigma_s \Lop g   - \eps^2 \sigma_a g - \eps \sigma_a (\Gamma-\average{\Gamma}), 
		\\ \tilde \rho + \Gamma + \eps g \big|_{\Gamma_-} = \phi \,.
	\end{cases}
\end{equation}

\subsubsection{$\Gamma(\bx,\bv)$ in 2D square domain}
As a second example, we consider a two dimensional square domain with  $\bx = (x,y) \in [-1,1]^2$, $\bv = (\cos \alpha, \sin \alpha), ~ \alpha \in [0, 2 \pi]$. We assume that only the boundary $x=-1$ has a boundary layer and choose $\sigma_s(\bx) = 1$, then the boundary value problem reads:
\begin{equation*} %\label{eqn:rte_2d}
	\begin{cases}{}
		\displaystyle \eps \bv \cdot \nabla_{\bx} f =  \Lop (f) - \eps^2 \sigma_a(\bx) f + \eps^2 G(\bx)  \,, \\
		f(-1, y, \alpha)  = \phi_L(y, \alpha), ~ \alpha \in [0, \pi/2] \cup [3 \pi/2, 2 \pi]  \,, \\
		f(1, y ,\alpha)  = 0,  ~ \alpha \in [\pi/2, 3 \pi /2]  \,, \\
		f(x, -1 ,\alpha) = 0,  ~ \alpha \in [0, \pi ] \,, \\
		f(x, 1 ,\alpha) = 0,  ~ \alpha \in [\pi, 2 \pi ]\,,
	\end{cases}
\end{equation*}
where $\Lop (f) = \average{f} -f$ with $\average{f} = \frac{1}{2\pi} \int_0^{2\pi} f \rd \alpha$. 
In this case, we define the stretch variable $z$ as
\[
z=\frac{x+1}{\eps}\,,
\]
and solve $f_\bl(z,y,\alpha)$ from
\begin{equation} \label{eqn:hsp_2Dy}
	\begin{cases}{}
		\cos \alpha \partial_z f_\bl = \Lop (f_\bl) \,, \\
		f_\bl (0, y, \alpha) = \phi_L(y, \alpha), ~ \cos \alpha >0.
	\end{cases}
\end{equation}
Then the boundary layer corrector can be obtained as
\[
\Gamma(x,y,\alpha) = f_\bl(\frac{x+1}{\eps}, y, \alpha) - f_\bl^\infty (y) \,,
\]
where $f_\bl^\infty (y) = \lim_{x \rightarrow \infty} f_\bl(x,y, \alpha)$.

As in the previous case, we do not solve \eqref{eqn:hsp_2Dy} on infinite domain. Instead, we impose the zero flux condition
\[
\average{\cos \alpha f_\bl(z,y, \alpha)} = \frac{1}{2\pi} \int_0^{2\pi} \cos \alpha f_\bl(z,y, \alpha) \rd \alpha =   0\,.
\]
To implement, we place $N_y$ grid points on $y$ and denote them by $y_j,~ j= 1, \cdots, N_y$. Then for each fixed $y_j$, we use the following loss function to obtain $f_\bl(z,y_j, \alpha)$:
\begin{align*}
	\mE(f_\bl(\cdot,y_j, \cdot)) = &\| \cos \alpha \partial_z f_\bl(\cdot,y_j,\cdot) - \Lop(f_\bl(\cdot,y_j,\cdot)) \|^2_{L^2(\Omega)} \\ & + \| \average{\cos \alpha f_\bl(\cdot,y_j,\alpha)} \|^2_{L^2(\Omega_z)} + \| f_\bl(0,y_j,\cdot) - \phi_L(y_j, \cdot) \|^2_{L^2(\Gamma_{-})}\,,
\end{align*}
where $\Omega:=\Omega_z \times [0, 2 \pi] = [0,Z] \times [0, 2 \pi]$ and $\Gamma_{-}=[0, \pi/2] \cup [3 \pi/2, 2 \pi] $.
Consequently, since 
$\eps \cos \alpha \partial_x \Gamma = \Lop(\Gamma)$, $\eqref{eq:mmdBL}$ in 2D is simplified to
\begin{equation*} %\label{rhog2D}
	\begin{cases}{}
		\average{\bv \cdot \nabla_{\bx} g} + \frac{1}{\eps} \average{\sin \alpha \partial_y \Gamma} =  -\sigma_a (\tilde\rho + \average{\Gamma}) + G  , 
		\\ \bv \cdot \nabla_{\bx} (\tilde \rho  + \eps g) + \sin \alpha \partial_y \Gamma  = \Lop g   - \eps \sigma_a (\tr + \eps g + \Gamma) + \eps G, 
		\\ \tilde \rho + \Gamma + \eps g \big|_{\Gamma_-} = \phi \,.
	\end{cases}
\end{equation*}

%%%%%%%%%%%%%%%%%%%%%%%%%%%%%%%%%%%%%%%%%%%%%%%%%%
\subsection{Algorithm}
In this section, we include implementation details of our algorithm. For expository simplicity, we will describe it for one dimensional case, i.e., \eqref{eqn:rte_1d}. The generalization to two dimensions is straightforward.

The first step is to obtain a neural network approximation $f_\bl^{nn}(\theta; z,v)$ to the half space problem \eqref{eqn:hsp_1d1}. To this end, we first generate the training set. For residual loss, given a sufficiently large number $Z$, assign $N_z^r$ uniform grid points on $z \in [0, Z)$, i.e. $z_i^r = (i-1) \Delta z$ with $\Delta z = Z/N_z^r$. { Since the physical dimension of $v$ is at most two, to numerically evaluate the integration we choose $N_v^r$ Gaussian quadrature points, which requires relatively smaller number of points and provide better accuracy than the uniform grid points and the randomly sample points. Denoted the Gaussian quadrature points as $\{v_j^r\}_{j=1}^{N_v^r}$}, with corresponding weights $\{ w^r_j  \}_{j=1}^{N^r_v}$. For boundary loss function, we randomly sample $N_v^b$ velocity points and denote them as $ \{ v^b_j  \}_{j=1}^{N^b_v}$. Then the empirical loss function becomes
\begin{equation*}
	\begin{aligned}
		\mE_\bl^N (f_\bl^{nn}(\theta)) &= \sum_{i=1}^{N_z^r} \sum_{j=1}^{N_v^r} (\partial_z f_\bl^{nn}(\theta; z_i^r, v_j^r) - \Lop f_\bl^{nn}(\theta; z_i^r, v_j^r) )^2 w_j^r \Delta z \\
		& + \sum_{i=1}^{N_z^r}( \sum_{j=1}^{N_v^r} w^r_j v_j^r f_\bl^{nn}(\theta; z_i^r, v_j^r)   )^2 \Delta z + \frac{1}{N_b} \sum_{j=1}^{N_v^b} (f_\bl^{nn}(\theta; 0, v^b_j) - \phi_L(v^b_j))^2\,.
	\end{aligned}
\end{equation*}
Here the second term on the right hand side corresponds to the condition \eqref{vBL}. Minimizing over $\theta$ of $\mE_\bl^N (f_\bl^{nn}(\theta))$, one gets 
\begin{equation} \label{eqn0916}
	\theta_* = \arg \min_{\theta} \mE_\bl^N (f_\bl^{nn}(\theta)) \,,
\end{equation}
and thus obtains $f_\bl^{nn}(\theta_*; z, v)$. Here we summarize the procedure of training the neural network in Algorithm~\ref{alg1}.

\begin{algorithm}[H]
	\SetAlgoLined
	Input: Training set $\{z_i^r\}_{i=1}^{N_z^r}$, $\{v_j^r\}_{j=1}^{N_v^r}$,  $\{ w^r_j  \}_{j=1}^{N^r_v}$, $ \{ v^b_j  \}_{j=1}^{N^b_v}$; neural network parameters: number of hidden layer $n_l$, number of neurons in each layer $n_r$ and activation function; two max iteration numbers $I_{max1}$, $I_{max2}$.
	
	Output: $\theta_*$
	
	Initialize neural network; 
	
	Set $k_1=0$, $k_2=0$ \\
	\While{$k_1<I_{max1}$ and $\mE_\bl^N(\theta^{k_1}) < \delta_1$}{
		Update $\theta^{k_1}$ by applying Adam to problem $\eqref{eqn0916}$, $k_1 = k_1 +1$\,; 
	}
	Let $\theta^{k_2=0} = \zeta^{k_1}$ \;
	\While{$k_2<I_{max2}$ and $\nabla_\theta \mE_\bl^N(\theta^{k_2}) < \delta_2$}{
		Update $\theta^{k_2}$ by applying LBFGS to problem $\eqref{eqn0916}$, $k_2 = k_2 +1$ \,;
	}
	$\theta_* = \theta^{k_2}$
	\caption{Algorithm for $\eqref{eqn0916}$}
	\label{alg1}
\end{algorithm}

After this, we denote $f_\bl^\infty \approx f_\bl^{nn}(\theta; Z, 0)$ as it is homogeneous in $v$ and 
extend the function value of $f_\bl^{nn}(\theta_*; z, v)$ as 
\begin{equation*}
	f_\bl^{nn}(\theta_*; z, v) = \left \{ \begin{array}{cc} f_\bl^{nn}(\theta_*; z, v)\,, & 0 \leq z \leq Z \,,
		\\ f_\bl^\infty \,, & z > Z\,,
	\end{array} \right.
\end{equation*}
and  compute the boundary layer corrector as follows: 
\begin{equation*}
	\Gamma(x,v) = f_\bl^{nn}(\theta_*; \frac{x}{\eps}, v) - f_\bl^{\infty}\,.
\end{equation*}

To proceed, we solve the following macro-micro system, which is tailored from \eqref{rhog1D} to adapt the specific boundary condition here  
\begin{equation*} \label{eqn: rg_hsp_general}
	\begin{cases}{}
		\langle v \partial_x g \rangle = -\sigma_a \tr - \sigma_a \average{\Gamma} + G, \, \\
		v \partial_x(\tr + \eps g) = \sigma_s \Lop(g)  -\eps \sigma_a(\tr + \eps g + \Gamma) + \eps G  , \, \\
		\tr(0)  + \eps g(0, v>0) + \Gamma(0,v>0) = \phi_L(v), \\
		\tr(1) + \eps g(1, v<0) + \Gamma(1, v<0) = 0 .
	\end{cases}
\end{equation*}
As before, we first generate the training set, which consists of uniform grids in $x$ and Gaussian quadrature point in $v$ for residual loss, and random sample points in $v$ for boundary loss. Once the empirical loss function is formed, applying the same optimization procedure, we obtain the predicted solution $\tilde \rho^{nn}$ and $g^{nn}$. 

\section{Theoretical analysis}\label{sec:mainthm}
We hereby provide a theoretical justification of our neural network formulation. In particular, we intend to show that the $L^2$ error of the predicted solution by neural network is {\it uniformly} bounded by the loss function. Let us first state a theorem that justifies the well-posedness of the $(\rho,g)$-system \eqref{eq:mmd}. 
\begin{theorem}
	Let Assumption \ref{ass:sigma} and Assumption \ref{ass:L} hold. Then the system \eqref{eq:mmd} has a unique solution $(\rho,g) \in \mX\times \mX$ with $\average{g} = 0$. 
\end{theorem}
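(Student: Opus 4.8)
The plan is to prove the theorem by reducing the $(\rho,g)$-system \eqref{eq:mmd} to the original transfer equation \eqref{eqn:rte}, whose well-posedness in $\mX$ is furnished by Theorem \ref{thm:wellpos}. The link is the substitution $f = \rho + \eps g$: this map is a bijection between solutions of \eqref{eq:mmd} (with the constraint $\average{g}=0$) and solutions of \eqref{eqn:rte}, its inverse being $\rho = \average{f}$, $g = \eps^{-1}(f-\average{f})$. I would organize the argument as (i) the algebraic equivalence, (ii) uniqueness, (iii) existence, with (iii) carrying essentially all of the difficulty.

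For the equivalence and uniqueness: if $(\rho,g)\in\mX\times\mX$ with $\average{g}=0$ solves \eqref{eq:mmd}, set $f:=\rho+\eps g\in\mX$. Since $\rho\in\mathcal N(\Lop)$ we have $\Lop f=\eps\Lop g$; substituting the macro equation $\average{\bv\cdot\nabla_\bx g}=-\sigma_a\rho+G$ into the micro equation and using $f-\rho=\eps g$ collapses it to $\bv\cdot\nabla_\bx f = \sigma_s\Lop g - \eps\sigma_a f + \eps G$, and multiplying by $\eps$ is exactly \eqref{eqn:rte}; the boundary relation $\rho+\eps g|_{\Gamma_-}=\phi$ is $f|_{\Gamma_-}=\phi$. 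Hence any solution of \eqref{eq:mmd} yields a solution of \eqref{eqn:rte}. Uniqueness then follows immediately: two solutions $(\rho_1,g_1)$, $(\rho_2,g_2)$ give $f_1,f_2$ solving \eqref{eqn:rte}, so $f_1=f_2$ by Theorem \ref{thm:wellpos}; applying $\average{\cdot}$ and using $\average{g_i}=0$ gives $\rho_1=\rho_2$, whence $g_1=g_2$.

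For existence, take the solution $f\in\mX$ of \eqref{eqn:rte} from Theorem \ref{thm:wellpos} and define $\rho:=\average{f}$, $g:=\eps^{-1}(f-\rho)$, so $\average{g}=0$ automatically. Taking the velocity average of \eqref{eqn:rte} in divergence form and invoking $\average{\bv}=0$ together with $\average{\Lop(\cdot)}=0$ (Assumption \ref{ass:L}, item 1) recovers the macro equation; rearranging \eqref{eqn:rte} itself, again with $\Lop f=\eps\Lop g$ and the macro equation, recovers the micro equation; the boundary condition transfers verbatim. It remains only to check $\rho,g\in\mX$.

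The main obstacle is precisely this last regularity check: a priori one controls only the combination $\bv\cdot\nabla_\bx(\rho+\eps g)=\bv\cdot\nabla_\bx f\in L^2(\Omega)$, and one must decouple it into $\bv\cdot\nabla_\bx\rho\in L^2(\Omega)$ and $\bv\cdot\nabla_\bx g\in L^2(\Omega)$ separately. I would proceed as follows. From \eqref{eqn:rte}, $\sigma_s\Lop f = \eps\bv\cdot\nabla_\bx f + \eps^2\sigma_a f - \eps^2 G\in L^2(\Omega)$; projecting onto $\mathcal N(\Lop)^\perp$ and using the bounded pseudo-inverse (Assumption \ref{ass:L}, items 3--4) yields the closed formula $g = \Lop^{-1}\!\big(\sigma_s^{-1}(\bv\cdot\nabla_\bx f+\eps\sigma_a f-\eps G)\big)\in L^2(\Omega)$ with a quantitative bound, so in particular the flux $J:=\average{\bv g}$ lies in $L^2(\Omega_\bx)$. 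Inserting the expansion $J = -\sigma_s^{-1}\mathrm{D}\,\nabla_\bx\rho + \eps\,\widetilde J$, with diffusion matrix $\mathrm{D}:=-\average{\bv\otimes\Lop^{-1}\bv}\succ 0$, into the macro equation $\nabla_\bx\cdot J = -\sigma_a\rho+G$ shows that $\rho$ solves a second-order elliptic problem with leading operator $-\nabla_\bx\cdot(\sigma_s^{-1}\mathrm{D}\nabla_\bx\cdot)$ and $L^2$ data on the Lipschitz domain $\Omega_\bx$, so $\rho\in H^1(\Omega_\bx)$, i.e. $\bv\cdot\nabla_\bx\rho\in L^2(\Omega)$; then $\bv\cdot\nabla_\bx g=\eps^{-1}(\bv\cdot\nabla_\bx f-\bv\cdot\nabla_\bx\rho)\in L^2(\Omega)$ and $g\in\mX$. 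A tidier alternative that sidesteps the mildly circular dependence of $\widetilde J$ on $\nabla_\bx g$ is to skip the explicit reformulation and instead rerun the a priori estimate of \cite{egger2014lp} directly on the $(\rho,g)$-pair, closing it with the spectral-gap bound $\average{g\Lop g}\le -c\average{g^2}$ from Assumption \ref{ass:L}, item 3. In either case the algebraic equivalence is routine, and the decoupled $\mX$-regularity is where the attention is needed.
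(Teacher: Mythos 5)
Your core strategy is the same as the paper's: both reduce \eqref{eq:mmd} to the scalar problem \eqref{eqn:rte} via $f=\rho+\eps g$, and both obtain existence by taking the solution $f$ of Theorem \ref{thm:wellpos} and reading off the pair $\rho=\average{f}$, $g=\eps^{-1}(f-\average{f})$ (your scaling of $g$ is the right one; the paper's displayed formula $g=f-\average{f}$ is an evident typo). Where you genuinely diverge is uniqueness: the paper closes it by tracking the energy estimate in the proof of Lemma \ref{lem:stab} (see \eqref{eq:stab5} and \eqref{eq:stab1}) applied to the difference of two solutions, whereas you run the algebraic equivalence in the reverse direction --- any solution of \eqref{eq:mmd} with $\average{g}=0$ yields a solution of \eqref{eqn:rte} in $\mX$, so uniqueness from Theorem \ref{thm:wellpos} forces $f_1=f_2$, and averaging gives $\rho_1=\rho_2$, $g_1=g_2$. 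Your route is valid and arguably cleaner, since the well-posedness result is already in hand; the paper's route instead reuses only the stability machinery it needs anyway for Theorem \ref{thm:main}.

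One caveat on the existence step. You are right that the only nontrivial point is the membership $(\rho,g)\in\mX\times\mX$: from $f\in\mX$ one controls the combination $\bv\cdot\nabla_\bx(\rho+\eps g)$ but not each piece separately. However, the argument you sketch does not close: the flux expansion $J=-\sigma_s^{-1}\mathrm{D}\,\nabla_\bx\rho+\eps\widetilde J$ already presupposes that $\nabla_\bx\rho$ and $\nabla_\bx g$ make sense separately, and the resulting elliptic problem for $\rho$ carries $\eps\,\nabla_\bx\cdot\widetilde J$ in its data, which is precisely the quantity not yet known to be controlled --- so concluding $\rho\in H^1(\Omega_\bx)$ from ``$L^2$ data'' is circular, as you yourself note, and the fallback of ``rerunning the a priori estimate of \cite{egger2014lp} on the pair'' is left unspecified. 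To be fair, the paper does not prove this point either; it simply asserts that the pair lies in $\mX\times\mX$ ``by construction.'' So this is a gap you share with, rather than introduce relative to, the published proof, but a complete write-up would need an actual regularity argument for $\rho=\average{f}$ (and hence for $g$) in place of the circular sketch.
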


\begin{proof}
	The existence of $(\rho,g) \in \mX\times \mX$ follows from Theorem \ref{thm:wellpos}. Indeed, let $f\in \mX$ be the unique solution of \eqref{eqn:rte}. Then by construction, the pair $(\rho, g) := (\average{f}, f - \average{f})\in \mX\times\mX$ solves \eqref{eq:mmd} with $\average{g} =0$. Moreover, the uniqueness follows by tracking the proof of Lemma \ref{lem:stab} (see the bound \eqref{eq:stab5}).
\end{proof}

Next we proceed to show that our new (population) loss function $\mE(f)$ defined in \eqref{eq:loss0} satisfies a stability estimate, namely the  $L^2$-error between the neural networks solution $f$ and the exact solution $f^\ast$ can be bounded above by $\mE(f)$.  Let $(\rho^\ast, g^\ast)\in \mX\times \mX$ be the solution to the macro-micro system \eqref{eq:mmd} and $f^\ast =\rho^\ast+\eps g^\ast $ be the exact solution to \eqref{eqn:rte}.  Let $(\rho, g)\in \mX\times \mX$ be a neural network approximation to  $(\rho^\ast, g^\ast)$ and let $f =\rho+\eps g$. 
% \begin{equation}\label{eq:loss}
	% \begin{aligned}
		%       \mE(f) & = \mE(\rho, g)\\
		%       & = \|\average{\bv \cdot \nabla_{\bx} g} + \sigma \rho - G\|_{L^2(\Omega)}^2 + \|\bv \cdot \nabla_{\bx} (\rho + \eps g)-\eps\average{ \bv\cdot\nabla_{\bx} g}  - \sigma_s \Lop g  +\eps^2 \sigma_a g - \eps G\|_{L^2(\Omega)}^2\\
		%   & \qquad + \|\average{g}\|_{L^2(\Omega)}^2+ \|\rho+\eps g - \phi\|_{L^2(\Gamma_-)}^2.
		% \end{aligned}
	% \end{equation}
The the main theoretical result is as follows. 

\begin{theorem}\label{thm:main}
	Let  $(\rho, g)\in \mX\times \mX$. Then there exists a constant $C_\eps>0$ such that $\lim_{\eps \downarrow 0}C_\eps < \infty$ and that 
	\begin{equation}\label{eq:sstab1}
		\|f - f^\ast\|_{L^2(\Omega)}^2 \leq \frac{C_\eps}{\eps}\mE(f)\,,
	\end{equation}
	where $\mE(f)$ is defined in \eqref{eq:loss0}.
	If in addition $\phi = \phi(x) \in H^{\frac{1}{2}}(\partial \Omega_{\bx})$ and $\rho \in H^1(\Omega_{\bx})$, then 
	\begin{equation}\label{eq:sstab2}
		\|f - f^\ast\|_{L^2(\Omega)}^2 \leq C_\eps\mE(f),
	\end{equation}
	where again $C_\eps$ satisfies that $\lim_{\eps \downarrow 0}C_\eps < \infty$.
\end{theorem}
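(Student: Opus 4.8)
The plan is to turn both bounds into an energy estimate for the error of the macro--micro pair. Write $e_\rho := \rho-\rho^\ast$, $e_g := g-g^\ast$ and $e_f := f-f^\ast = e_\rho+\eps e_g$. Since $(\rho^\ast,g^\ast)$ solves \eqref{eq:mmd} exactly, the four terms in $\mE(\rho,g)$ from \eqref{eq:loss0} are (up to notation) the squared $L^2$-norms of the residuals of the error version of \eqref{eq:mmd}, namely
\[
r_1 := \average{\bv\cdot\nabla_{\bx}e_g}+\sigma_a e_\rho,\qquad r_2 := \average{e_g},\qquad r_3 := (e_\rho+\eps e_g)\big|_{\Gamma_-},\qquad r_4 := \bv\cdot\nabla_{\bx}e_f-\eps\average{\bv\cdot\nabla_{\bx}e_g}-\sigma_s\Lop e_g+\eps^2\sigma_a e_g,
\]
so that $\mE(f)=\norm{r_1}_{L^2(\Omega)}^2+\norm{r_2}_{L^2(\Omega)}^2+\norm{r_3}_{L^2(\Gamma_-)}^2+\norm{r_4}_{L^2(\Omega)}^2$. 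I would also decompose $e_g=r_2+e_g^\perp$ with $e_g^\perp\in\mathcal N^\perp(\Lop)$, and record that $\bv\cdot\nabla_{\bx}e_\rho\in\mathcal N^\perp(\Lop)$ since $\average{\bv}=0$ on $\bs^{d-1}$.

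Next comes the core energy identity (essentially Lemma~\ref{lem:stab}). I would pair the macro error equation against $e_\rho$ over $\Omega_x$, pair the micro error equation against $e_g$ over $\Omega$, and add; after one integration by parts in $\bx$ the cross terms $\mp\int_{\Omega_x}\nabla_{\bx}e_\rho\cdot\average{\bv e_g^\perp}$ cancel. Using Assumption~\ref{ass:L}(1) to kill $\average{e_\rho\,\Lop e_g}$, Assumption~\ref{ass:L}(3) to get $-\average{e_g\,\Lop e_g}\ge c\,\average{|e_g^\perp|^2}$, Assumption~\ref{ass:L}(5) to control $\Lop e_g$ by $\norm{e_g^\perp}$, Assumption~\ref{ass:sigma} for the absorption terms, and the transport identity $\int(\bv\cdot\nabla_{\bx}e_f)e_f=\tfrac12\norm{e_f}_{L^2(\Gamma_+,|\bv\cdot\bn|)}^2-\tfrac12\norm{e_f}_{L^2(\Gamma_-,|\bv\cdot\bn|)}^2$ together with $e_f|_{\Gamma_-}=r_3$ (here $\Gamma_+$ is the outflow boundary), followed by Young's inequality on every term carrying an $r_i$, I expect a bound of the form
\[
\eps\,\norm{e_g^\perp}_{L^2(\Omega)}^2+\norm{e_f}_{L^2(\Gamma_+,|\bv\cdot\bn|)}^2 \ \lesssim\ \mE(f)+\eta\,\norm{e_\rho}_{L^2(\Omega_x)}^2
\]
for every $\eta>0$ (constant depending on $\eta$). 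Crucially, this alone does \emph{not} control $\norm{e_\rho}_{L^2(\Omega_x)}$, since $\sigma_a$ may vanish.

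Controlling the macro error $e_\rho$ is the main obstacle, and it is where the diffusion structure of \eqref{eqn:rte} must be used. The idea is to invert $\Lop$ in the micro error equation: by Assumption~\ref{ass:L}(4), $e_g^\perp=\Lop^{-1}\!\big(\tfrac{1}{\sigma_s}\bv\cdot\nabla_{\bx}e_\rho\big)+\eps\,\theta_1+\theta_2$, where $\theta_1$ gathers $O(1)$ quantities built from $e_g^\perp$ and $\bv\cdot\nabla_{\bx}e_g$ and $\theta_2$ is linear in $r_4$. Substituting into $\nabla_{\bx}\cdot\average{\bv e_g^\perp}+\sigma_a e_\rho=r_1$ gives an elliptic equation for $e_\rho$ whose principal part is the diffusion operator of \eqref{diff}; testing it against $e_\rho$ and using the coercivity $-\average{h\,\Lop^{-1}h}\gtrsim\norm{h}^2$ (from \ref{ass:L}(3)--(4)) with $\sigma_s\le\sigma_{\max}$ yields $\norm{\nabla_{\bx}e_\rho}_{L^2(\Omega_x)}^2$ in terms of $\mE(f)$, a boundary contribution, and absorbable $\eps$-corrections; a Poincaré/trace inequality on the Lipschitz domain $\Omega_x$ then upgrades this to $\norm{e_\rho}_{L^2(\Omega_x)}^2$. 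The boundary contribution is exactly what separates the two estimates: in general $e_\rho$ on $\partial\Omega_x$ is only pinned down through $e_\rho+\eps e_g=r_3$ on $\Gamma_-$, and when $\phi$ varies in $\bv$ the micro error $e_g$ near the boundary can be of size $O(1/\eps)$ (a boundary layer), so controlling $\norm{e_\rho}_{L^2(\partial\Omega_x)}$ by the loss costs a factor $1/\eps$, giving \eqref{eq:sstab1}. Under the extra hypotheses $\phi=\phi(x)\in H^{1/2}(\partial\Omega_x)$ — so there is no boundary layer and $\rho^\ast|_{\partial\Omega_x}=\zeta=\phi$ — and $\rho\in H^1(\Omega_x)$ — so $e_\rho\in H^1(\Omega_x)$ has a genuine $H^{1/2}$-trace — this boundary term is handled without the $1/\eps$ loss, giving \eqref{eq:sstab2}.

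Finally I would assemble the pieces: since $e_f=e_\rho+\eps e_g^\perp+\eps r_2$, we have $\norm{e_f}_{L^2(\Omega)}^2\le 3|\bs^{d-1}|\,\norm{e_\rho}_{L^2(\Omega_x)}^2+3\eps^2\norm{e_g^\perp}_{L^2(\Omega)}^2+3\eps^2\norm{r_2}_{L^2(\Omega)}^2$; plugging in the bounds on $\norm{e_\rho}$ and $\eps\norm{e_g^\perp}^2$ from the two previous steps, taking $\eta$ small, and absorbing gives the claimed inequalities. For $\eps$-uniformity one checks that all constants used — $c$, $\sigma_{\min}$, $\sigma_{\max}$, $C_K$, $|\bs^{d-1}|$, the ellipticity constant of the diffusion operator, and the Poincaré/trace constants of $\Omega_x$ — are $\eps$-independent, while the only $\eps$-dependence entering $C_\eps$ comes from factors of the form $1+C\eps$ (through $\theta_1,\theta_2$ and the $\eps^2\sigma_a$ term), which stay bounded as $\eps\downarrow0$; for $\eps$ away from $0$ one may instead invoke Theorem~\ref{thm:wellpos}. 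The hard part, and the step I expect to require the most care, is the third one: extracting the elliptic/diffusion estimate for $e_\rho$ in the absence of absorption, and in particular making the boundary treatment rigorous — which is precisely what forces the $1/\eps$ in \eqref{eq:sstab1} and the additional regularity hypotheses in \eqref{eq:sstab2}.
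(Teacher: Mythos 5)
Your overall reduction (identifying the loss terms with the residuals of the error system and running an energy estimate for the fluctuation) matches the paper, but your core step for recovering the macro error takes a different route, and as written it has a genuine gap. The paper never derives an elliptic equation for $e_\rho$: it applies the directional Poincar\'e inequality (Lemma \ref{lem:poin}) to the full error $\tilde f=e_\rho+\eps e_g$, $\|\tilde f\|_{L^2(\Omega)}^2\le C_P\big(\|\bv\cdot\nabla_{\bx}\tilde f\|_{L^2(\Omega)}^2+\int_{\Gamma_-}|(\bv,0)\cdot\bn|\,\tilde f^2\,ds\big)$, and the whole point is that $\bv\cdot\nabla_{\bx}\tilde f$ is exactly the combination occurring in the micro residual, so it is bounded by $r_4$, by $\sigma_s\Lop e_g$ (which Assumption \ref{ass:L}(5) controls through $C_K\|e_g\|$, already furnished by the energy estimate), by $\eps^2\sigma_a e_g$, and by $\eps\average{\bv\cdot\nabla_{\bx}e_g}=\eps(r_1-\sigma_a e_\rho)$ via the macro equation; no derivative of $e_g$ alone is ever needed. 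Your route instead inverts $\Lop$ in the micro equation and substitutes into the macro equation: the correction $\theta_1$ then contains the pointwise quantity $\bv\cdot\nabla_{\bx}e_g$ (not merely its average), which is not controlled by $\mE(f)$ in any norm, and when you test the resulting divergence-form equation against $e_\rho$ and integrate by parts you also generate boundary fluxes of the type $\int_{\partial\Omega_x}e_\rho\,\average{(\bv\cdot\bn)\,e_g^\perp}$ involving the trace of $e_g$ over \emph{all} velocities, including outflow ones, about which the loss carries no information. You offer no mechanism for absorbing either term, so your step~3 does not close; it is precisely the difficulty that the paper's use of Lemma \ref{lem:poin} together with the $L^2$-norm of the micro equation is designed to bypass.

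The second gap is in the improved estimate \eqref{eq:sstab2}: you assert that under $\phi=\phi(x)\in H^{1/2}(\partial\Omega_x)$ and $\rho\in H^1(\Omega_x)$ the boundary contribution ``is handled without the $1/\eps$ loss,'' but give no argument. The paper's mechanism (estimate \eqref{eq:stab2} of Lemma \ref{lem:stab}) is concrete: write the boundary datum as $\zeta=\zeta_1(\bx)+\eps\zeta_2(\bx,\bv)$, lift $\zeta_1$ by the harmonic extension $f_1$ with the elliptic regularity bound $\|f_1\|_{H^1(\Omega_x)}\le C\|\zeta_1\|_{H^{1/2}(\partial\Omega_x)}$, and apply the crude estimate \eqref{eq:stab1} to $f-f_1$, whose boundary datum is $\eps\zeta_2$, so the $\eps^{-1}$ becomes an $\eps$. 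Merely observing that $e_\rho\in H^1(\Omega_x)$ has an $H^{1/2}$ trace does not bound that trace by the loss, so some lifting-and-subtraction argument of this kind is indispensable. Note also that the $1/\eps$ in \eqref{eq:sstab1} arises mechanically in the paper from the boundary term $\eps\int_{\Gamma_-}|(\bv,0)\cdot\bn|\tilde f^2\,ds$ of the energy identity after dividing by $\eps^2$, not from the boundary-layer size heuristic for $e_g$ that you invoke.
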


\begin{remark}
	The estimate \eqref{eq:sstab2} of Theorem \ref{thm:main} shows that if the boundary data $\phi \in H^{\frac{1}{2}}(\partial \Omega_{\bx})$ and the approximate solution $\rho \in H^1(\Omega_\bx)$, then the $L^2$-error between $f$ and $f^\ast$ can be bounded by the loss $\mE(f)$ uniformly in the regime where $\eps $ is small. It is worth to comment on the role of the above stability estimate in the numerical analysis of neural network methods for solving PDEs. In fact, from a practical perspective, an approximate solution is parameterized by neural networks $f^N_\theta$ and is obtained by minimizing the empirical loss $\mE^N(f^{nn})$ (instead of $\mE(f^{nn})$) with respect to the neural network parameters $\theta$. Thanks to the well-established generalization theory of statistical learning \cite{shalev2014understanding}, the difference between the population loss $\mE(f_\theta^N)$ and the empirical loss $\mE^N(f_\theta^N)$ (also known as the generalization gap) can be made arbitrarily small as both the number of quadrature points and the complexity of the neural network increase to infinity. As a result, the population loss $\mE(f_\theta^N)$, and equivalently the $L^2$-error $\|f-f^\ast\|_{L^2(\Omega)}$ (thanks to Theorem \ref{thm:main}), can be made small through minimizing the  empirical loss $\mE^N(f_\theta^N)$ via training. In another word, the stability bounds enable us to transfer the bound on trainable loss function to the solution.  In the present paper, we only focus on the stability estimate and leave the complete generalization error analysis to the interested readers; such generalization analysis for neural networks has been carried out in the context of PDEs, see e.g.  \cite{mishra2020estimates,lu2021priori,lu2021priori2}.  
	
\end{remark}

\begin{proof}[Proof of Theorem \ref{thm:main}]
	Let us define $\tilde{\rho} = \rho - \rho^\ast, \tilde{g} = g - g^\ast$ and $\tilde{f} = f - f^\ast$. Then it is easy to verify that $(\tilde{\rho}, \tilde{g})$ solves the  boundary value problems
	% \begin{equation}
		%     \begin{aligned}\label{eq:diff}
			% \average{\bv \cdot \nabla_{\bx} \tilde{g}} + \sigma_a \tilde{\rho}  & = \average{\bv \cdot \nabla_{\bx} g} + \sigma_a \rho  - G & =: r_1 & \text{ in }  \Omega,\\
			% \bv \cdot \nabla_{\bx} (\tilde{\rho} + \eps \tilde{g})  -\eps\average{ \bv\cdot\nabla_{\bx} \tilde{g}} - \sigma_s \Lop \tilde{g} + \eps^2 \sigma_a \tilde{g}& = \bv \cdot \nabla_{\bx} (\rho + \eps g) -\eps\average{ \bv\cdot\nabla_{\bx} g} - \sigma_s \Lop g + \eps^2 \sigma_a g - \eps G & =: r_2 & \text{ in } \Omega, \\
			% \tilde{\rho} + \eps \tilde{g} & = \rho + \eps g - f_b & =: r_3 & \text{ on }  \Gamma_-
			% \end{aligned}
		% \end{equation}
	\begin{equation*}
		\begin{aligned} %\label{eq:diff}
			\average{\bv \cdot \nabla_{\bx} \tilde{g}} + \sigma_a \tilde{\rho}  & =: r_1 & \text{ in }  \Omega,\\
			\bv \cdot \nabla_{\bx} (\tilde{\rho} + \eps \tilde{g})  -\eps\average{ \bv\cdot\nabla_{\bx} \tilde{g}} - \sigma_s \Lop \tilde{g} + \eps^2 \sigma_a \tilde{g} & =: r_2 & \text{ in } \Omega, \\
			\tilde{\rho} + \eps \tilde{g} & =: r_3 & \text{ on }  \Gamma_-\,.
		\end{aligned}
	\end{equation*}
	Then $\tilde{f}$ satisfies that
	$$\begin{aligned}\label{eq:tf}
		\eps \bv \cdot \nabla_{\bx} \tilde{f} & = \sigma_s(\bx) \Lop \tilde{f}(\bx, \bv) - \eps^2\sigma_a \tilde{f} + \eps^2r_1(\bx) + \eps r_2(\bx,\bv) & \text{ on }  \Omega,\\
		\tilde{f} & = r_3 & \text{ on }  \Gamma_-.
	\end{aligned}
	$$
	Observe that by definition $\average{r_2} = \eps^2 \sigma_a \langle  g\rangle$. { Then an application of Lemma \ref{lem:stab} with $\eta = r_2 - \langle r_2\rangle$ and  $\xi = r_1 +  \eps^{-1}\langle r_2\rangle = r_1+ \eps \sigma_a \average{g}$},  the estimate \eqref{eq:sstab1} follows from \eqref{eq:stab1}. Furthermore, if $\phi = \phi(x) \in H^{\frac{1}{2}}(\partial \Omega_x)$ and $\rho \in H^1(\Omega)$, then on $\Gamma_-$ one has $f(\bx) = \rho(\bx) - \phi(\bx) + \eps g(\bx,\bx)$ with $ \rho(\bx) - \phi(\bx)  \in H^{\frac{1}{2}}(\partial \Omega_x)$. Therefore applying the estimate \eqref{eq:stab2} of Lemma \ref{lem:stab} leads to \eqref{eq:sstab2}.
\end{proof}

\begin{lemma}\label{lem:stab}
	Let $f \in H^1(\Omega)$ solve the problem 
	\begin{equation}\label{eq:pdeg}
		\begin{aligned}
			\eps \bv\cdot \nabla_{\bx} f(\bx,\bv)& = \sigma_s \mL f(\bx, \bv) - \eps^2\sigma_a f  + \eps^2 \xi(\bx) + \eps \eta(\bx, \bv) &  \text{ on } \Omega,\\
			f(\bx,\bv) & = \zeta(\bx,\bv) &  \text{ on } \Gamma_-,
		\end{aligned}
	\end{equation}
	where $\xi\in L^2(\Omega_x), \eta \in L^2(\Omega)$ with $\average{\eta} = 0$ and $\zeta\in L^2(\Gamma_-)$. 
	Then there exists a constant $C_\eps>0$ depending on $\eps,\sigma_a,\sigma_s, C_K$ such that $\lim_{\eps\downarrow 0} C_\eps<\infty$ and that 
	\begin{equation}\label{eq:stab1}
		\|f\|_{L^2(\Omega)}^2\leq C_\eps (\|\xi\|_{L^2(\Omega_x)}^2 + \|\eta\|_{L^2(\Omega)}^2 + \eps^{-1} \|\zeta\|_{L^2(\Gamma_-)}^2)\,.
	\end{equation}
	If in addition $\zeta  = \zeta_1(\bx) + \eps \zeta_2(\bx,\bv)$ where $\zeta_1 \in H^{\frac{1}{2}}(\partial \Omega_x)$ and $ \zeta_2\in L^2(\Gamma_-)$, then 
	\begin{equation}\label{eq:stab2}
		\|f\|_{L^2(\Omega)}^2\leq  C_\eps(\|\xi\|_{L^2(\Omega_x)}^2 + \|\eta\|_{L^2(\Omega)}^2 + \|\zeta_1\|_{H^{\frac{1}{2}}(\partial \Omega_x)}^2 + \eps\|\zeta_2\|_{L^2(\Gamma_-)}^2)\,.
	\end{equation}
	In particular, the stability constants in \eqref{eq:stab2} are uniformly bounded in $\eps$ as $\eps \downarrow 0$.
\end{lemma}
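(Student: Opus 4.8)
The plan is to establish \eqref{eq:stab1} by an energy/elliptic argument and then to deduce \eqref{eq:stab2} from it by lifting the $\bv$-independent part of the boundary data into the domain. Throughout I decompose the solution itself as $f=\rho+\eps g$ with $\rho:=\average f$ and $g:=\eps^{-1}(f-\average f)$, so that $\average g=0$ and, by orthogonality, $\norm{f}_{L^2(\Omega)}^2=|\bs^{d-1}|\,\norm{\rho}_{L^2(\Omega_x)}^2+\eps^2\norm{g}_{L^2(\Omega)}^2$; hence it suffices to bound $\norm{\rho}_{L^2(\Omega_x)}$ and $\eps\norm{g}_{L^2(\Omega)}$ separately. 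Averaging \eqref{eq:pdeg} and subtracting gives the macro equation $\average{\bv\cdot\nabla_\bx g}+\sigma_a\rho=\xi$ and the micro equation $\bv\cdot\nabla_\bx\rho+\eps\bigl(\bv\cdot\nabla_\bx g-\average{\bv\cdot\nabla_\bx g}\bigr)=\sigma_s\Lop g-\eps^2\sigma_a g+\eta$, where $\average\eta=0$, $\average\bv=0$ and $\Lop f=\eps\Lop g$ are used.

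\noindent\textbf{Step 1 (energy estimate).} First I would test \eqref{eq:pdeg} with $f$ over $\Omega$. The transport term produces the boundary flux $\tfrac{\eps}{2}\int_{\Gamma_+}|\bv\cdot\bn|f^2-\tfrac{\eps}{2}\int_{\Gamma_-}|\bv\cdot\bn|\zeta^2$; Assumption \ref{ass:L}(1)--(3) together with $\Lop f=\eps\Lop g$ give $\int_\Omega\sigma_s(\Lop f)f\le-c\,\sigma_{\min}\eps^2\norm{g}_{L^2(\Omega)}^2$; and $\average\eta=0$ gives $\int_\Omega\eta f=\eps\int_\Omega\eta g$. Dividing by $\eps$ and applying Young's inequality yields an estimate of the form
\[
\eps^2\norm{g}_{L^2(\Omega)}^2+\eps\norm{f}_{L^2(\Gamma_+,\,|\bv\cdot\bn|)}^2\ \lesssim\ \eps\norm{\zeta}_{L^2(\Gamma_-)}^2+\eps^2\norm{\xi}_{L^2(\Omega_x)}^2+\eps^2\norm{\eta}_{L^2(\Omega)}^2+\eps^2\norm{\rho}_{L^2(\Omega_x)}^2,
\]
so that $\eps^2\norm{g}^2$ and the weighted outflow trace of $f$ are controlled by the data \emph{modulo} the term $\eps^2\norm{\rho}^2$.

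\noindent\textbf{Step 2 (controlling $\rho$; the main obstacle).} The difficulty is that \eqref{eq:pdeg} imposes no Dirichlet condition on $\rho=\average f$ --- in the diffusion limit the boundary value of $\rho$ is encoded through the half-space/boundary-layer problem --- so the coercivity for $\rho$ must be drawn from the diffusive structure rather than from a boundary condition. I would fix the mean $\bar\rho:=|\Omega_x|^{-1}\int_{\Omega_x}\rho$ and introduce a potential $\Phi$ solving an elliptic problem on $\Omega_x$ with source $\rho-\bar\rho$ and conormal (Neumann-type) boundary condition, whose diffusion tensor $\mathbb{D}(\bx)$, built from $\sigma_s$ and $\Lop^{-1}$, is uniformly positive definite by Assumption \ref{ass:L}(3)--(4). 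Testing the macro equation against $\Phi$, integrating by parts in $\bx$ (the conormal condition annihilates the $\partial\Omega_x$ term), using the micro equation to substitute $\sigma_s\Lop g=\bv\cdot\nabla_\bx\rho+\eps(\cdots)+\eps^2\sigma_a g-\eta$, and then exploiting self-adjointness of $\Lop$, boundedness of $\Lop^{-1}$, Assumption \ref{ass:L}(5) and $H^2$-elliptic regularity of $\Phi$ (all $\bx$-derivatives being moved onto the smooth test function), one arrives at an identity of the form $\norm{\rho-\bar\rho}_{L^2(\Omega_x)}^2\lesssim\bigl(\norm{\xi}+\norm{\eta}+\eps\norm{g}\bigr)\norm{\rho-\bar\rho}_{L^2(\Omega_x)}+\eps\,(\text{boundary terms})$. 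The mean $\bar\rho$ is recovered separately: integrating the macro equation over $\Omega_x$ ties the flux $\int_{\partial\Omega_x}\average{\bv g}\cdot\bn$ to $\int\xi$ and $\int\sigma_a\rho$, and comparing $\rho=\average f$ on $\partial\Omega_x$ with the inflow data $\zeta$ (using $f=\zeta$ on $\Gamma_-$ and the outflow trace from Step 1) pins $\bar\rho$ down in terms of the data. Absorbing the small-coefficient $\norm{\rho-\bar\rho}^2$ contributions then gives $\norm{\rho}_{L^2(\Omega_x)}^2\lesssim\norm{\xi}^2+\norm{\eta}^2+\eps^{-1}\norm{\zeta}_{L^2(\Gamma_-)}^2+(\text{small})\,\eps^2\norm{g}^2$; the crude treatment of the $\Gamma_-$-boundary contributions is precisely what produces the factor $\eps^{-1}$ on $\norm{\zeta}^2$ in \eqref{eq:stab1}.

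\noindent\textbf{Step 3 (conclusion).} Feeding the bound on $\norm{\rho}^2$ from Step 2 back into Step 1 closes the cross terms, and tracking the powers of $\eps$ shows the constant stays bounded as $\eps\downarrow0$, which is \eqref{eq:stab1}. For \eqref{eq:stab2}, write $\zeta=\zeta_1(\bx)+\eps\zeta_2$ and let $R\in H^1(\Omega_x)$ be an extension of $\zeta_1$ with $\norm{R}_{H^1(\Omega_x)}\le C\norm{\zeta_1}_{H^{1/2}(\partial\Omega_x)}$. Since $\Lop R=0$, the shifted function $\hat f:=f-R$ solves \eqref{eq:pdeg} with source $\tilde\xi:=\xi-\sigma_a R\in L^2(\Omega_x)$, $\tilde\eta:=\eta-\bv\cdot\nabla_\bx R\in L^2(\Omega)$ (still $\average{\tilde\eta}=0$), and inflow data $\eps\zeta_2$ on $\Gamma_-$. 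Applying \eqref{eq:stab1} to $\hat f$ gives $\norm{\hat f}_{L^2(\Omega)}^2\lesssim\norm{\tilde\xi}^2+\norm{\tilde\eta}^2+\eps^{-1}\norm{\eps\zeta_2}_{L^2(\Gamma_-)}^2\lesssim\norm{\xi}^2+\norm{\eta}^2+\norm{\zeta_1}_{H^{1/2}(\partial\Omega_x)}^2+\eps\norm{\zeta_2}_{L^2(\Gamma_-)}^2$, where the $\eps^{-1}$ now meets the factor $\eps^2$ coming from the shifted boundary data and becomes harmless; \eqref{eq:stab2} then follows from $\norm{f}\le\norm{\hat f}+\norm{R}$. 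The single genuinely delicate point is Step 2: extracting a coercive estimate for $\rho$ that is uniform in $\eps$ despite the absence of a boundary condition, while keeping all boundary terms under control.
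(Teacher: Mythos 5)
Your Step 1 is the paper's energy estimate, and your Step 3 (lifting the $\bv$-independent part $\zeta_1$ into the domain and applying the first estimate to the shifted function) is essentially identical to the paper's treatment of \eqref{eq:stab2}, which uses the harmonic extension of $\zeta_1$. The problem is Step 2, which you yourself flag as the delicate point: it is only a strategy sketch, and as described it has a genuine gap. The paper does not use a dual elliptic problem at all. Its key tool is the directional Poincar\'e inequality (Lemma \ref{lem:poin}, from \cite{manteuffel1999boundary}), $\|f\|_{L^2(\Omega)}^2 \leq C_P\bigl(\|\bv\cdot\nabla_{\bx} f\|_{L^2(\Omega)}^2 + \int_{\Gamma_-}|(\bv,0)\cdot\bn| f^2\,ds\bigr)$, applied after rewriting $\bv\cdot\nabla_{\bx} f = \eps(\xi-\sigma_a\rho)+\sigma_s\Lop g+\eps^2\sigma_a g+\eta$; the term $\|\sigma_s\Lop g\|$ is bounded by $\sigma_{\max}C_K\|g\|$ (Assumption \ref{ass:L}(5)) and $\|g\|$ by the energy estimate, which is where the $\eps^{-1}\|\zeta\|_{L^2(\Gamma_-)}^2$ comes from. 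This inequality controls the full $L^2$ norm of $f$, mean included, from the streaming derivative plus the inflow trace alone, which is exactly what your duality argument struggles to reproduce.

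Concretely, the gap in your Step 2 is the recovery of the mean $\bar\rho$. Assumption \ref{ass:sigma} only requires $\sigma_a\geq 0$, so in the admissible case $\sigma_a\equiv 0$ integrating the macro equation over $\Omega_x$ gives $\int_{\partial\Omega_x}\average{\bv g}\cdot\bn\,ds=\int_{\Omega_x}\xi$, which contains no information about $\bar\rho$. Your fallback, ``comparing $\rho=\average{f}$ on $\partial\Omega_x$ with the inflow data,'' is circular: the only trace control available is the $|\bv\cdot\bn|$-weighted bound on $f$ on $\Gamma_+$ from Step 1 (with an $\eps^{-1/2}$ loss) together with $f=\zeta$ on $\Gamma_-$; writing $f=\rho+\eps g$ on the boundary, isolating $\rho(\bx)$ requires a trace bound on $g$, i.e. on $(f-\rho)/\eps$, which is not available independently of $\rho$. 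Two further technical points in the same step are left hanging: the duality argument produces not one but two boundary terms (the conormal condition on $\Phi$ kills $\int_{\partial\Omega_x}\rho\,\mathbb{D}\nabla\Phi\cdot\bn$ but not $\int_{\partial\Omega_x}\average{\bv g}\cdot\bn\,\Phi$, which must be handled through the $H(\mathrm{div})$ structure of $\average{\bv g}$), and the $H^2$ elliptic regularity you invoke for $\Phi$ is not guaranteed on the merely Lipschitz domain $\Omega_x$ assumed in the paper. Unless you can prove a substitute for the mean control --- which is, in effect, re-proving the directional Poincar\'e inequality --- the argument does not close; with that lemma in hand, the whole of your Step 2 can be replaced by a one-line $L^2$ bound on the transport residual, as the paper does.
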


\begin{proof}
	Let us first prove the estimate \eqref{eq:stab1}. 
	Multiplying \eqref{eq:pdeg} with $f$ and the integrating on $\Omega$ leads to 
	\begin{equation}\label{eq:stab3}
		\eps \int_{\partial \Omega}(\bv,0) \cdot {\bn}  f^2 ds - \int_{\Omega}\sigma_s \mL f f  \rd x\,\rd v + \eps^2\int_{\Omega}   \sigma_a f^2 \rd x\,\rd v = \eps^2 \int_{\Omega} \xi f\rd x\,\rd v + \eps \int_{\Omega} \eta f\rd x\,\rd v.
	\end{equation}
	Notice from the definition of $\Gamma_-$ that 
	$$
	(\bv,0) \cdot \bn  =
	\begin{cases}
		- |(\bv,0) \cdot \bn|  & \text{ on } \Gamma_-, \\
		|(\bv,0) \cdot \bn|  & \text{ on } \Gamma_+. 
	\end{cases}
	$$ 
	Therefore we have from \eqref{eq:stab3} that 
	\begin{align*} %\label{eq:stab4}
		& \eps \int_{\Gamma_+} |(\bv,0) \cdot \bn|  f^2 ds \!-\! \int_{\Omega}\sigma_s \mL f f  \rd x\,\rd v + \eps^2\int_{\Omega}   \sigma_a f^2 \rd x\,\rd v 
		\\ & \quad =  \eps \int_{\Gamma_-} |(\bv,0) \cdot \bn|  f^2 ds \! + \!  \eps^2 \int_{\Omega} \xi f\rd x\,\rd v + \eps \int_{\Omega} \eta f \rd x\,\rd v.
	\end{align*}
	Thanks to 
	part (3) of Assumption \eqref{ass:L}, the positivity of $\sigma_s$ and the non-negativity of $\sigma_a$ we have from above that 
	\begin{equation}\label{eq:stab5}
		c \sigma_{\min} \|f - \average{f}\|_{L^2(\Omega)}^2+ \eps^2\int_{\Omega}   \sigma_a f^2 \rd x\,\rd v \leq \eps  \int_{\Gamma_-}|(\bv,0) \cdot \bn|  f^2 ds +   \eps^2 \int_{\Omega} \xi f \rd x\,\rd v + \eps \int_{\Omega} \eta f \rd x\,\rd v.
	\end{equation}
	Now let us write $f(\bx,\bv) = \rho(\bx) + \eps g(\bx, \bv) $ with $\rho = \average{f}$ and ${  g = \frac{1}{\eps} (f - \average{f})}$. Then  $(\rho, g)$ satisfy 
	\begin{equation}\label{eq:stab6}
		\begin{aligned}
			\average{\bv\cdot \nabla_{\bx} g} & = -\sigma_a \rho + \xi &  \text{ on } \Omega, \\
			\bv\cdot \nabla_{\bx} (\rho+\eps g) -\eps   \average{\bv\cdot \nabla_{\bx} g} - \sigma_s\mL g &= \eps^2 \sigma_a g+ \eta&  \text{ on } \Omega,\\
			\rho + \eps g &= \zeta&   \text{    on } \Gamma_-.
		\end{aligned}
	\end{equation}
	%It is straightforward to verify that $f = \rho + \eps g $ indeed solves \eqref{eq:pdeg}. 
	By the assumption that $\average{\eta} = 0$, one has 
	\begin{equation}\label{eq:stab7}
		\int_{\Omega} \eta f \rd x\,\rd v = \eps \int_{\Omega} \eta g \rd x\,\rd v. %= \eps \int_{\Omega} \eta ( g - \average{g}) \rd x\,\rd v.
	\end{equation}
	It follows from \eqref{eq:stab5}, \eqref{eq:stab7} and Young's inequality that for $\alpha>0$,
	$$
	c \sigma_{\min} \|g \|_{L^2(\Omega)}^2 + \! \int_{\Omega} \!\!  \sigma_a f^2 \rd x\,\rd v \leq \frac{1}{\eps} \!\! \int_{\Gamma_-} \!\!\!\! |(\bv,0) \cdot \bn|  f^2 ds + \frac{\|\xi\|^2_{L^2(\Omega)}  +  \|\eta\|^2_{L^2(\Omega)}}{4\alpha}  + \alpha \|f\|^2_{L^2(\Omega)} + \alpha \|g \|^2_{L^2(\Omega)}.
	$$
	In particular, for any $\alpha \leq \frac{c\sigma_{\min}}{2}$, we have 
	$$
	\frac{c \sigma_{\min}}{2} \|g \|_{L^2(\Omega)}^2 +  \int_{\Omega}   \sigma_a f^2 \rd x\,\rd v \leq \frac{1}{\eps}  \int_{\Gamma_-}|(\bv,0) \cdot \bn|  f^2 ds + \frac{\|\xi\|^2_{L^2(\Omega)}  \!\!+\!  \|\eta\|^2_{L^2(\Omega)}}{4\alpha}  + \alpha \|f\|^2_{L^2(\Omega)}.
	$$
	Now taking $L^2$-norm on the second line of \eqref{eq:stab6} and applying Lemma \ref{lem:poin}, one obtains that 
	$$
	\begin{aligned}
		\|f\|_{L^2(\Omega)}^2 & \leq C_P\left(\|\bv \cdot \nabla_{\bx} f\|_{L^2(\Omega)}^2 + \int_{\Gamma_-} | (\bv, 0)\cdot \bn|  f^2 ds \right)\\
		& \leq C_P \left(\eps^2\| \xi - \sigma_a \rho\|_{L^2(\Omega)}^2 + \|\eps^2 \sigma_a g+ \eta\|_{L^2(\Omega)}^2 + \|\sigma_s\mL g\|_{L^2(\Omega)}^2 + \|\zeta\|_{L^2(\Gamma_-)}^2 \right)\\
		& \leq C_P \Big(2\eps^2\| \xi\|_{L^2(\Omega)}^2 + 2\eps^2 \|\sigma_a f\|_{L^2(\Omega)}^2 + 4 \eps^4\|\sigma_a g\|_{L^2(\Omega)}^2 + 2\| \eta\|_{L^2(\Omega)}^2 \\
		&  \qquad + \sigma_{\max}^2 C_K^2 \| g \|_{L^2(\Omega)}^2 + \|\zeta\|_{L^2(\Gamma_-)}^2 \Big)\\
	\end{aligned}
	$$
	where in the second inequality we used the fact that $(\bv,0)\cdot \bn \leq 1$ since $|\bv|=1$, and in the last inequality we have used part (5) of Assumption \ref{ass:L} and the fact that $\mL \average{g} = 0$. Combining the last two inequality and using the fact that $0\leq \sigma_a \leq \sigma_{\max}$, we obtain that 
	$$\begin{aligned}
		\|f\|_{L^2(\Omega)}^2 & \leq C_P \Big( 2\eps^2 \|\xi\|_{L^2(\Omega)}^2 
		+ \Big(2\eps^2\sigma_{\max} + \frac{2\sigma_{\max}^2(4\eps^4+C_K^2)}{c \sigma_{\min}}\Big)\\
		& \qquad \qquad \times \Big(\eps^{-1} \|\zeta\|_{L^2(\Gamma_-)}^2 +\frac{\|\xi\|^2_{L^2(\Omega)}  +  \|\eta\|^2_{L^2(\Omega)}}{4\alpha}  + \alpha \|f\|^2_{L^2(\Omega)}\Big) \\
		& \qquad\qquad + 2\| \eta\|_{L^2(\Omega)}^2  +  \|\zeta\|_{L^2(\Gamma_-)}^2 
		\Big).
	\end{aligned}
	$$
	Setting 
	$$
	\alpha = \alpha^\ast := \Big(4\eps^2\sigma_{\max} + \frac{4\sigma_{\max}^2(4\eps^4+C_K^2)}{c \sigma_{\min}}\Big)^{-1}\wedge \frac{c \sigma_{\min}}{2}
	$$ 
	in the above leads to  
	$$\begin{aligned}
		\|f\|_{L^2(\Omega)}^2 & \leq 2C_P \left( 2\eps^2 + \frac{1}{4\alpha^\ast} \Big(2\eps^2 \sigma_{\max}+ \frac{2\sigma_{\max}^2(4\eps^4+C_K^2)}{c \sigma_{\min}}\Big)\right)  \|\xi\|_{L^2(\Omega)}^2\\
		& + 2C_P \left( 2 + \frac{1}{4\alpha^\ast} \Big(2\eps^2\sigma_{\max} + \frac{2\sigma_{\max}^2(4\eps^4+C_K^2)}{c \sigma_{\min}}\Big)\right)  \|\eta\|_{L^2(\Omega)}^2 \\
		& + 2C_P \left( 1 +\frac{1}{4\alpha^\ast} \Big(2\eps^2\sigma_{\max} + \frac{2\sigma_{\max}^2(4\eps^4+C_K^2)}{c \sigma_{\min}}\Big)\eps^{-1}\right)  \|\zeta\|_{L^2(\Gamma_-)}^2.
	\end{aligned}
	$$
	This in particular implies \eqref{eq:stab1}. 
	
	Next we prove the improved estimate \eqref{eq:stab2} when  $\zeta  = \zeta_1(\bx) + \eps \zeta_2(\bx,\bv)$ where $\zeta_1 \in H^{\frac{1}{2}}(\partial \Omega_x)$ and $ \zeta_2\in L^2(\Gamma_-)$. In fact, let us first decompose the solution as $f(\bx) = f_1(\bx) + f_2(\bx,\bv)$, where $f_1$ solves the Laplace problem 
	$$\begin{aligned}
		\Delta f_1  & = 0\,,  & \text{ on } \Omega_{\bx},\\
		f_1 & = \zeta_1(\bx)\,, & \text{ on } \partial \Omega_{\bx}\,,
	\end{aligned}
	$$
	where by the standard regularity estimate  $\|f_1\|_{H^1(\Omega_{\bx})} \leq C_1\|\zeta_1\|_{H^{\frac{1}{2}}(\partial \Omega_x)}$ for some $C_1>0$, 
	and $f_2 = f - f_1$ solves 
	$$
	\begin{aligned}
		\eps \bv\cdot \nabla_{\bx} f_2(\bx,\bv)& = \sigma_s \mL f_2(\bx, \bv) - \eps^2 \sigma_a(\bx) f_2(\bx,\bv) - \eps^2 \sigma_a(\bx) f_1(\bx) & 
		\\ &  \qquad + \eps^2 \xi(\bx) + \eps \eta(\bx, \bv) - \eps \bv\cdot \nabla_{\bx}f_1(\bx)   & \text{ on } \Omega,\\
		f_2(\bx,\bv) & = \eps \zeta_2(\bx,\bv)   & \text{ on } \Gamma_-.
	\end{aligned}
	$$
	Applying the estimate \eqref{eq:stab1} to the problem above and noticing that $\average{\bv\cdot \nabla_{\bx} f_1(x)} = 0$, we have 
	$$\begin{aligned}
		\|f_2\|_{L^2(\Omega)}^2 & \leq C_{2,\eps}(\|\xi\|_{L^2(\Omega_x)}^2+\|\sigma_a f_1\|_{L^2(\Omega_x)}^2 + \|\eta\|_{L^2(\Omega)}^2 + \|\bv\cdot \nabla_{\bx}f_1\|_{L^2(\Omega)}^2 + \eps \|\zeta_2\|_{L^2(\Gamma_-)}^2) \\
		& \leq \tilde{C}_{2,\eps}(\|\xi\|_{L^2(\Omega_x)}^2 + \|\eta\|_{L^2(\Omega)}^2 + \|\zeta_1\|_{H^{\frac{1}{2}}(\partial \Omega_x)}^2 + \eps\|\zeta_2\|_{L^2(\Gamma_-)}^2).
	\end{aligned}$$
\end{proof}
Let us recall the following directional Poincar\'e inequality from \cite{manteuffel1999boundary}.
\begin{lemma}[Directional Poincar\'e inequality]\label{lem:poin}
	There exists a constant $C_P$ depending only on $\Omega$ such that 
	$$
	\|f\|_{L^2(\Omega)}^2 \leq C_P \left(\|\bv \cdot \nabla_{\bx} f\|_{L^2(\Omega)}^2 + \int_{\Gamma_-} | (\bv, 0)\cdot \bn|  f^2 ds \right).
	$$
\end{lemma}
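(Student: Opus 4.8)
The plan is to establish the estimate first for $f\in C^1(\overline{\Omega})$ and then extend it to the natural graph space $\mX=\{f\in L^2(\Omega):\bv\cdot\nabla_{\bx}f\in L^2(\Omega)\}$ by a standard density argument, using that both sides are continuous with respect to the norm $\|f\|_{L^2(\Omega)}+\|\bv\cdot\nabla_{\bx}f\|_{L^2(\Omega)}$ together with the weighted trace on $\Gamma_-$ (continuity of the latter for the transport operator being classical, see e.g.\ \cite{egger2014lp}). The heart of the argument is a one-dimensional estimate along the characteristics of the free-transport field $\bv\cdot\nabla_{\bx}$, combined with the coarea formula to assemble the slice estimates.

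Fix $\bv\in\bs^{d-1}$. Since $\Omega_x\subset\R^d$ is bounded and Lipschitz, for a.e.\ line in direction $\bv$ the set of parameters $t$ with $\bx_0+t\bv\in\Omega_x$ is a finite union of open intervals, each of the form $(a,b)$ with $\bx_0+a\bv$ an inflow point (i.e.\ $\bv\cdot\bn<0$ there) and $\bx_0+b\bv$ an outflow point. Parametrizing $\bx\in\Omega_x$ by the pair $(\bx_0,s)$, where $\bx_0\in\partial\Omega_x^{-}(\bv):=\{\bx_0\in\partial\Omega_x:\bv\cdot\bn(\bx_0)<0\}$ is the inflow endpoint of the segment containing $\bx$ and $s\in(0,L(\bx_0,\bv))$ with $L(\bx_0,\bv)\le d_\Omega:=\mathrm{diam}(\Omega_x)$ is the arclength from $\bx_0$ to $\bx=\bx_0+s\bv$, the change-of-variables formula gives $d\bx=|\bv\cdot\bn(\bx_0)|\,ds\,dS(\bx_0)$.

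By the fundamental theorem of calculus along each segment, $f(\bx_0+s\bv,\bv)=f(\bx_0,\bv)+\int_0^s(\bv\cdot\nabla_{\bx}f)(\bx_0+t\bv,\bv)\,dt$, so Cauchy--Schwarz and $s\le d_\Omega$ give $|f(\bx_0+s\bv,\bv)|^2\le 2|f(\bx_0,\bv)|^2+2d_\Omega\int_0^{L(\bx_0,\bv)}|(\bv\cdot\nabla_{\bx}f)(\bx_0+t\bv,\bv)|^2\,dt$. Integrating this in $s$ over $(0,L(\bx_0,\bv))$, multiplying by $|\bv\cdot\bn(\bx_0)|$, integrating over $\bx_0\in\partial\Omega_x^{-}(\bv)$, and finally over $\bv\in\bs^{d-1}$, the first term reproduces $2d_\Omega\int_{\Gamma_-}|(\bv,0)\cdot\bn|\,f^2\,ds$, while the second, upon undoing the change of variables, is bounded by $2d_\Omega^2\|\bv\cdot\nabla_{\bx}f\|_{L^2(\Omega)}^2$. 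This yields the claim with $C_P=2\max(d_\Omega,d_\Omega^2)$, which depends only on $\Omega$.

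The main obstacle is the geometric bookkeeping in the slicing step for a non-convex Lipschitz domain: one must verify that for a.e.\ $\bv$ and a.e.\ line the intersection with $\Omega_x$ splits into finitely many segments, each starting at an inflow boundary point, and that the Jacobian of $(\bx_0,s)\mapsto\bx_0+s\bv$ is exactly $|\bv\cdot\bn(\bx_0)|$; this rests on a.e.\ differentiability of the Lipschitz boundary together with Fubini's theorem in the sliced coordinates (alternatively, one proves the inequality for convex or polyhedral $\Omega_x$ first and exhausts a general domain, or simply invokes \cite{manteuffel1999boundary}). Note that no degeneracy arises from grazing directions where $\bv\cdot\bn(\bx_0)\to 0$: the factor $|\bv\cdot\bn(\bx_0)|$ appears symmetrically as the Jacobian on the left and as the boundary weight on the right, so it effectively cancels and the estimate remains robust up to the boundary.
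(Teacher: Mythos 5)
Your argument is correct, but it is not the paper's: the paper does not prove this lemma at all, it simply recalls it from the cited reference \cite{manteuffel1999boundary}, whereas you supply a self-contained proof by integration along characteristics. Your proof is the standard Poincar\'e--Friedrichs argument for the transport operator: parametrize $\Omega_x$ for each fixed $\bv$ by the inflow endpoint $\bx_0$ of the segment containing $\bx$ and the arclength $s$, use the change of variables $d\bx = |\bv\cdot\bn(\bx_0)|\,ds\,dS(\bx_0)$, apply the fundamental theorem of calculus and Cauchy--Schwarz along each segment, and reassemble; the bookkeeping you carry out gives exactly $\|f\|_{L^2(\Omega)}^2 \le 2\,d_\Omega \int_{\Gamma_-}|(\bv,0)\cdot\bn|\,f^2\,ds + 2\,d_\Omega^2\,\|\bv\cdot\nabla_{\bx}f\|_{L^2(\Omega)}^2$, which is the claimed inequality with a constant depending only on $\Omega$, and the key point you highlight --- that the Jacobian weight $|\bv\cdot\bn(\bx_0)|$ is precisely the weight appearing in the boundary term, so grazing directions cause no degeneracy --- is exactly why the estimate is clean. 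The technical caveats you flag are the only places needing care, and both are standard: finiteness of the one-dimensional sections for a.e.\ line follows from the slicing theory for sets of finite perimeter (a bounded Lipschitz $\Omega_x$ has finite perimeter), and the extension from $C^1(\overline\Omega)$ to the graph space $\mX$ uses the classical density and weighted trace theory for the transport operator; moreover, if the weighted boundary integral of $f$ is infinite the inequality is vacuous, and in the paper's application the lemma is invoked for differences of graph-space solutions and (smooth) neural network functions, so this step is harmless. What your route buys is a transparent, quantitative constant $C_P = 2\max(d_\Omega, d_\Omega^2)$ and independence from the external reference; what the paper's route buys is brevity, deferring the geometric slicing details to \cite{manteuffel1999boundary}.
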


%%%%%%%%%%%%%%%%%%%%%%%%%%%%%%%%%%%%%%%%%%%
\section{Numerical examples}\label{sec:num}
In this section, we conduct  extensive numerical experiments to verify the efficiency and accuracy of our neural network formulation based on macro-micro-(boundary layer) decomposition. For the  structure of the neural network, we always use a fully connected network with $n_l$ layers and $n_r$ number of neurons within each layer. In the following examples, we use $\sigma^l(z) = \tanh (z)$ as the activation function of the hidden layer. For the activation function of the output layer, we use $\sigma_{\rho}^{o} (z)= \ln (1+e^z)$, $\sigma_{g}^{o} (z) = z$, and $\sigma_{f_\bl}^{o} (z) = {C_a}/{(1+e^{-z})}$ for the macro part $\rho$, micro part $g$ and boundary layer $f_\bl$, respectively. Here $C_a$ is tuned according to the $L_{\infty}$ norm of the incoming boundary condition. For instance, $C_a = \| \phi_L(v) \|_{\infty}$ in solving $\eqref{eqn:hsp_1d1}$. When training the neural network, as introduced in Algorithm~\ref{alg1},  $I_{max1}, \delta_1$, $I_{max2}, \delta_2$ are the stopping parameters for Adam and LBFGS step, respectively. In 1D case, we choose $I_{max1} = 1.2\times10^4$, $\delta_1=0.005$, $I_{max2} = 10^4$, $\delta_2=10^{-6}$; in 2D case, we choose $I_{max1} = 2\times 10^4$, $\delta_1=0.01$, $I_{max2} = 10^4$, $\delta_2=10^{-6}$. Unless otherwise specified, the learning rate for Adam step is fixed to be $10^{-3}$. 

Upon obtaining the neural network prediction $f^{nn}:= \rho^{nn}(\bx) + \eps g^{nn}(\bx,\bv)$ or $f^{nn}:= \tilde \rho^{nn}(\bx) + \eps g^{nn}(\bx,\bv) + \Gamma^{nn}(\bx,\bv)$, we calculate its $L^2$ error to the reference solution as
\begin{equation} \label{error}
	error  = \frac{\sum_{i=1}^{N_x} \sum_{j=1}^{N_v} (f^{nn}(\bx_i, \bv_j) - f^{ref}(\bx_j,\bv_j))^2 w_i w_j}{\sum_{i=1}^{N_x} \sum_{j=1}^{N_v} (f^{ref}(\bx_j,\bv_j))^2 w_i w_j}\,.
\end{equation}
Here $\{\bx_i, w_i\}$ and $\{\bv_j, w_j\}$ are the test set and corresponding weight we use to calculate the reference solution, and therefore will be more refined than the training set. In particular, we again use the Gaussian quadrature for $\bv$ and uniform mesh in $\bx$ without boundary layer, or two sets of uniform mesh with boundary layer.

\subsection{RTEs without boundary layers}
In this subsection we consider RTEs in one and two dimensions where the solutions do not have  boundary layers. 

% For extensions to spatially heterogeneous scattering (\ref{ex:inhomo}) and an-isotropic scattering (\ref{ex:aniso}), please see the appendix.  
\begin{example}\label{ex:homo}
	1D problem with spatially homogeneous scattering:
	\begin{equation*} % \label{eqn:1d_iso_const}
		\begin{cases}{}
			\eps v \partial_x f = \langle f \rangle - f \,, \\
			f(0, v>0)= 1, \quad f(1, v<0)= 0.
		\end{cases}
	\end{equation*}
	%\textbf{Homogeneous material}
\end{example}
% \subsubsection{Ex1: Homogeneous material}\label{Ex: Homo}
Using the loss function \eqref{eq:loss0}, we collect the results of $\eps = 1$ and $\eps = 10^{-3}$ in Figure~\ref{fig: rg_1d_epsi1}. Here we use $n_l = 4$, $n_r = 50$, $N^r_x = 80$, $N^r_v=60$ and $N^b_v = 60$ for training. The reference solution is obtained by finite difference method on test set with $N_x=200$, $N_v=80$. It is evident that in both cases, the prediction obtained by the neural networks matches well with the reference solution, which is provided by a finite difference solver. Additionally, the relative $L^2$ error \eqref{error} is well controlled by the loss function.

\begin{figure}[htbp]
	\centering
	{\includegraphics[width=0.3\textwidth]{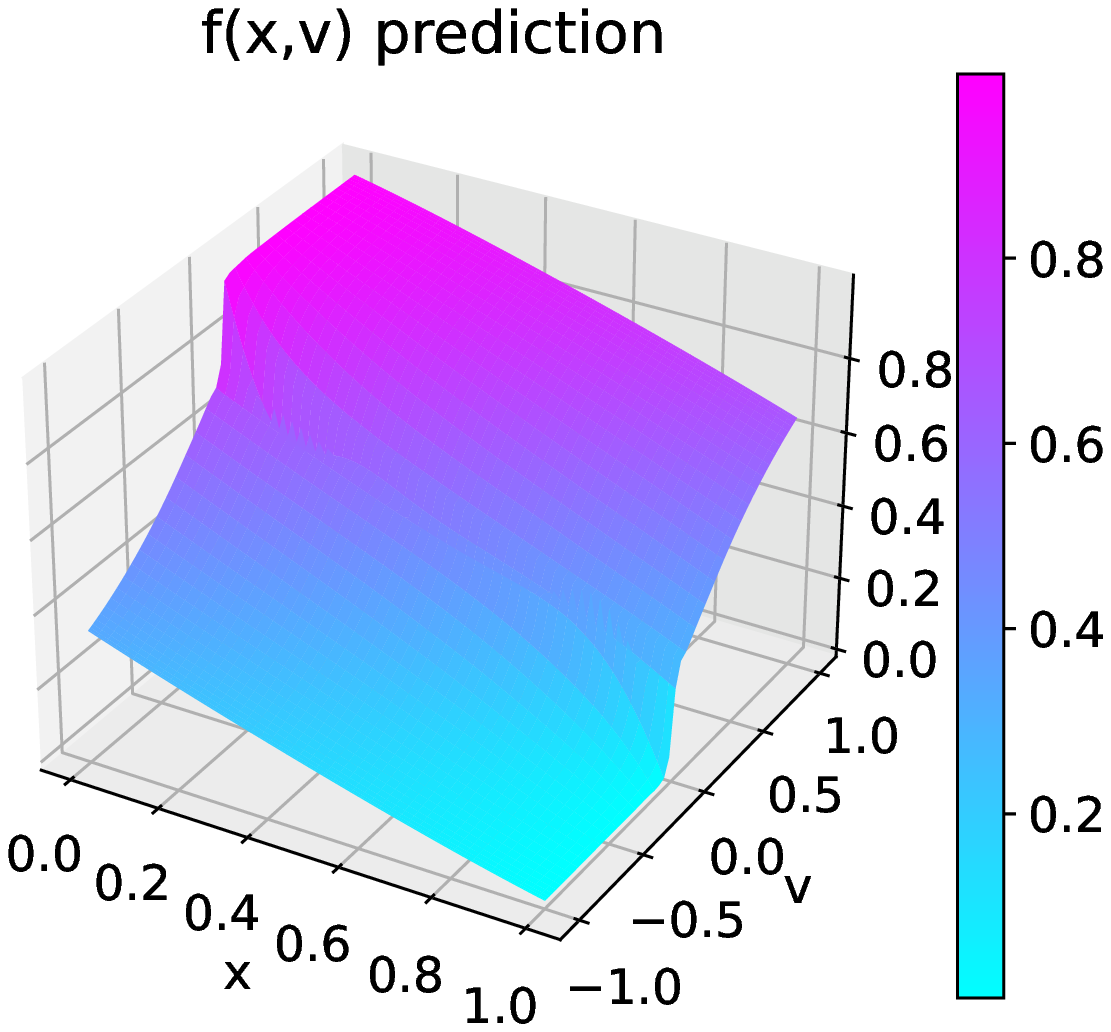}}
	{\includegraphics[width=0.3\textwidth]{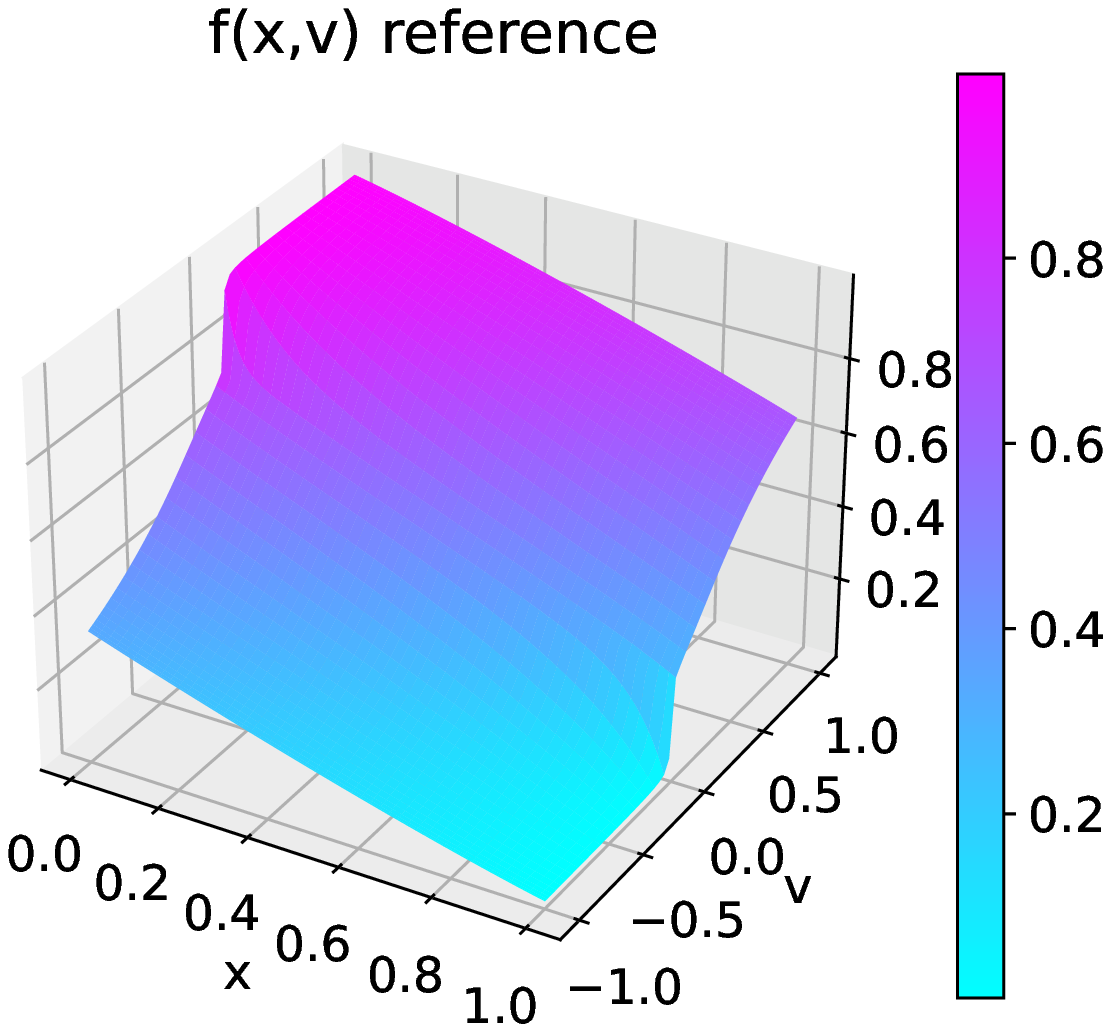}}
	% {\includegraphics[width=0.35\textwidth]{rg_1d_ori_epsi_1_rho.eps}}
	{\includegraphics[width=0.3\textwidth]{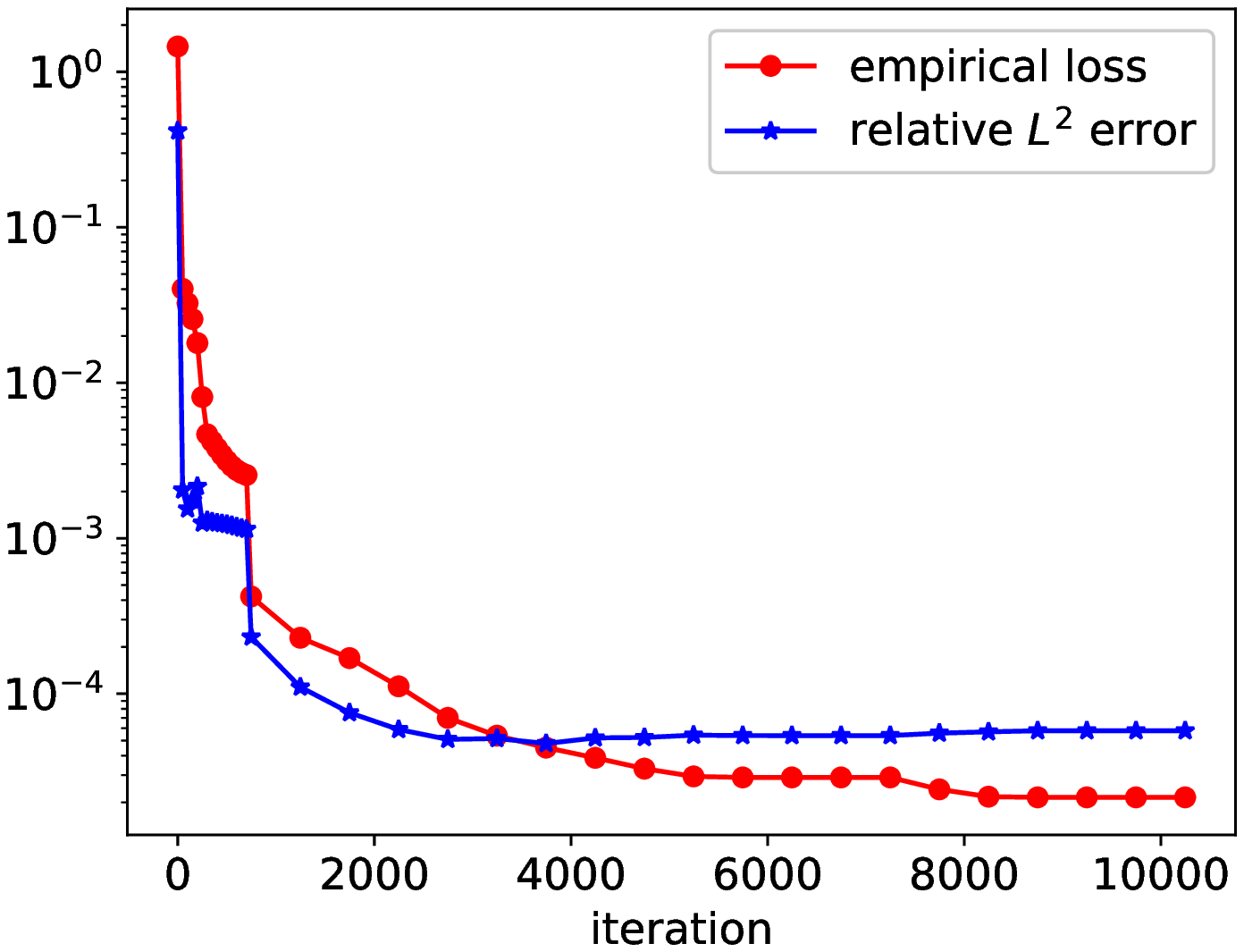}}
	{\includegraphics[width=0.3\textwidth]{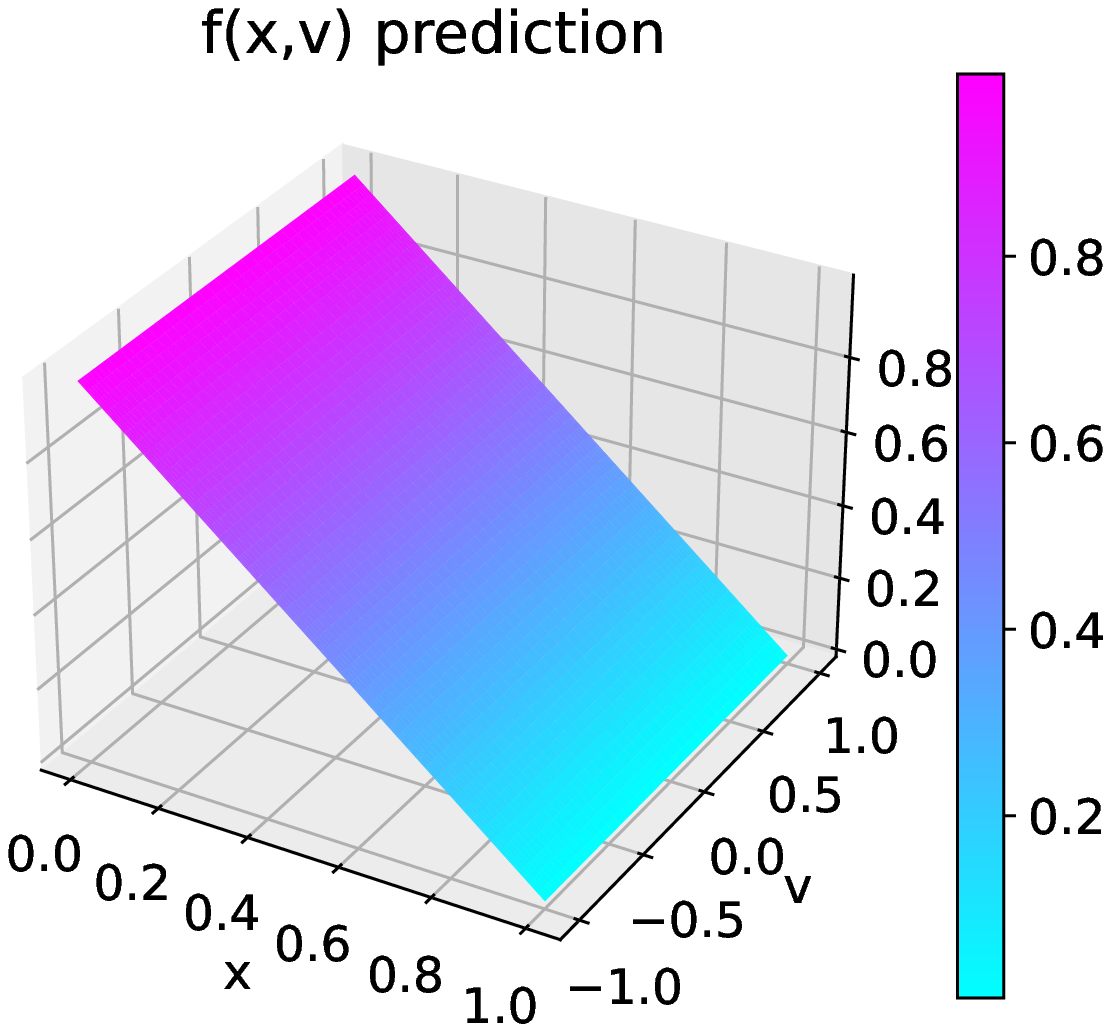}}
	{\includegraphics[width=0.3\textwidth]{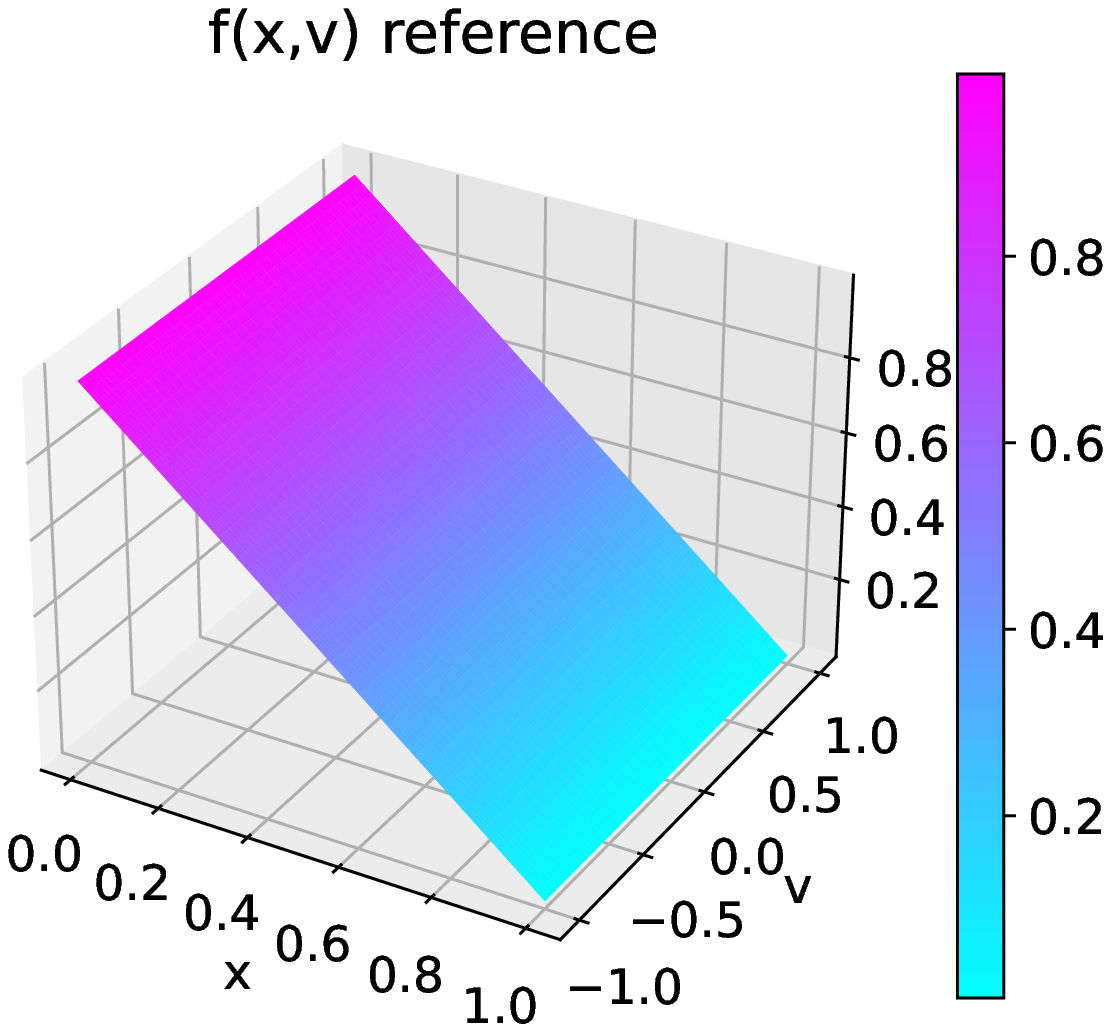}}
	{\includegraphics[width=0.3\textwidth]{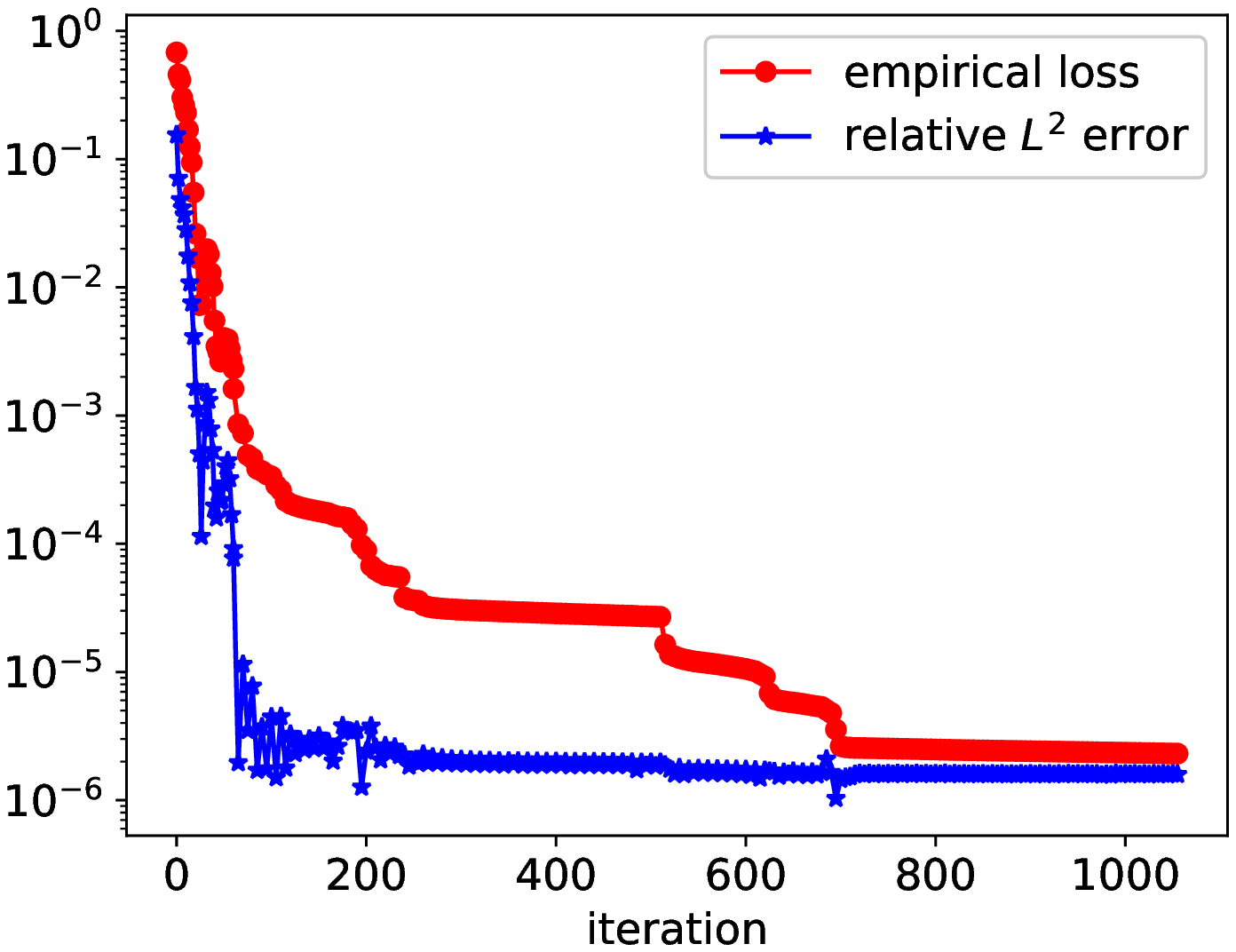}}
	\caption{Example~\ref{ex:homo} with $\eps = 1$ in top row and $\eps=0.001$ in bottom row. The left column is $f(x,v)$ prediction, the middle column is the reference $f(x,v)$, the right column is the empirical loss and relative $L^2$ error to reference solution. }
	\label{fig: rg_1d_epsi1}
\end{figure}
% \begin{figure}[htbp]
	% \centering
	% {\includegraphics[width=0.3\textwidth]{rg_1d_ori_epsi_zpzz1_f_pred.eps}}
	% {\includegraphics[width=0.3\textwidth]{rg_1d_ori_epsi_zpzz1_f_ref.eps}}
	% %{\includegraphics[width=0.35\textwidth]{rg_1d_ori_epsi_zpzz1_rho.eps}}
	% {\includegraphics[width=0.3\textwidth]{rg_1d_ori_epsi_zpzz1_error.eps}}
	% \caption{Example~\ref{ex:homo} with $\eps = 10^{-3}$. The left is $f(x,v)$ prediction, the middle is the reference $f(x,v)$, the right is the empirical loss and relative $L^2$ error to reference solution. }
	% \label{fig: rg_1d_epsizpzz1}
	% \end{figure}

\begin{example}\label{ex:analytic}
	2D problem with $\bx \in [-1,1]^2,~ \bv = (\cos \alpha, \sin \alpha)$:
	\begin{equation*}  %\label{eqn:2D_any}
		\begin{cases}
			\eps \boldsymbol{v} \cdot \nabla_{\boldsymbol{x}} f =  \frac{1}{2 \pi} \int_{|\boldsymbol{v}|=1} f(\bx, \bv) d \bv' -  f + \eps^2 G(\bx, \bv)  \,, \\
			f(-1, y, \alpha) =  e^{1-y}, ~ \alpha \in [0, \pi/2] \cup [3 \pi/2, 2 \pi]  \,, \\
			f(1, y ,\alpha) = e^{-1-y},  ~ \alpha \in [\pi/2, 3 \pi /2]  \,, \\
			f(x, -1 ,\alpha) = e^{1-x},  ~ \alpha \in [0, \pi ] \,, \\
			f(x, 1 ,\alpha) = e^{-1-x},  ~ \alpha \in [\pi, 2 \pi ] \,,
		\end{cases}
	\end{equation*}
	where $
	G(x,y,\alpha) = \frac{1}{\eps} (- \cos \alpha - \sin \alpha) e^{-x-y}.
	$
	This problem has an analytic solution
	$
	f(x,y,\alpha) = e^{-x-y}\,.
	$
	The numerical solutions for $\eps = 1$ and $\eps =10^{-3}$ are presented in Figure~\ref{fig:2d_analytic}, where the reference solution is the above analytic form. In both cases, the numerical parameters we use are: $n_l = 4$, $n_r = 30$, $N^r_x = 40$, $N^r_y=40$, $N^r_v=40$,  $N^b_v=40$, $N^b_x = 40$, $N^b_y = 40$ and $N^b_v=40$ for training. 
\end{example}
\begin{figure}[htbp]
	\centering
	{\includegraphics[width=0.3\textwidth]{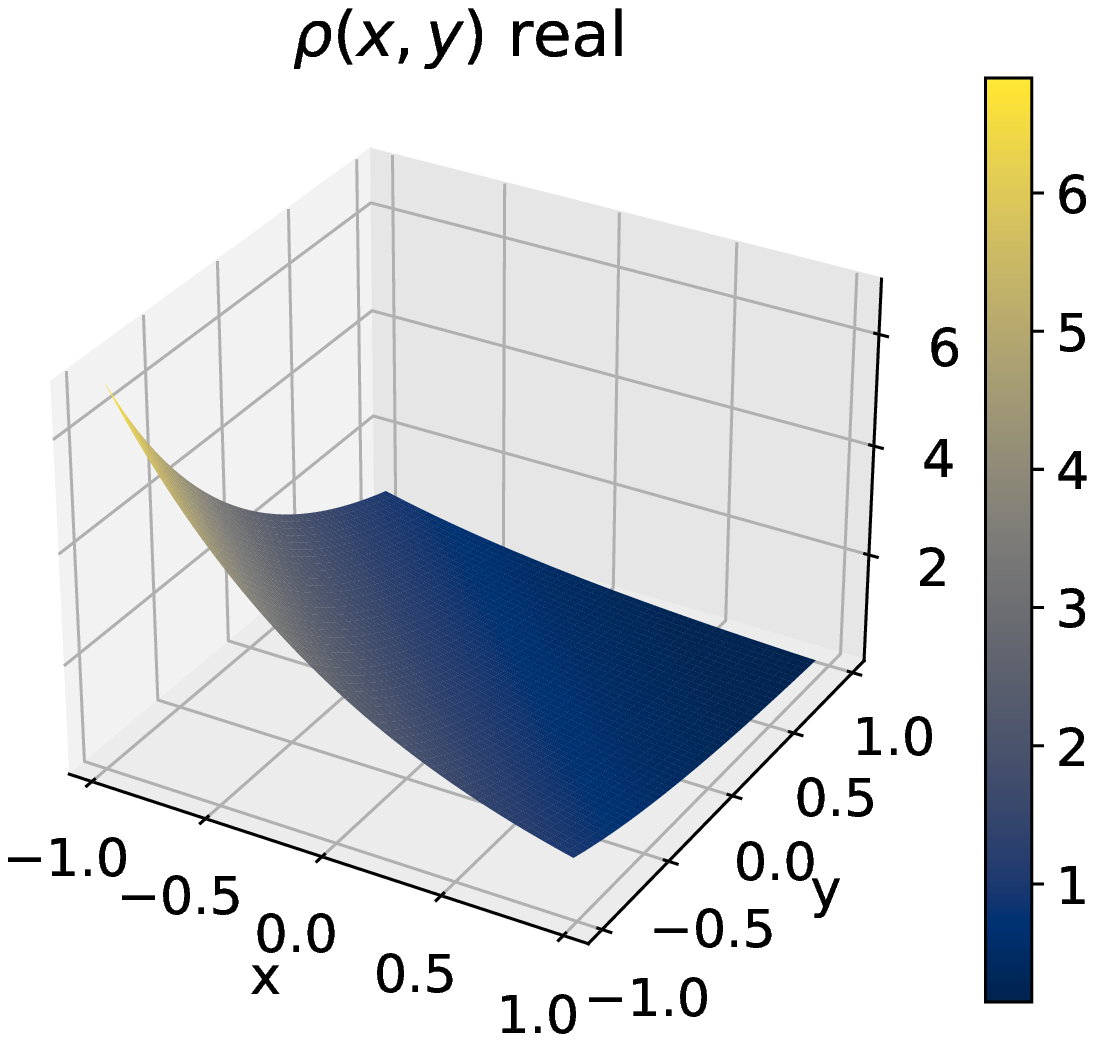}}
	{\includegraphics[width=0.3\textwidth]{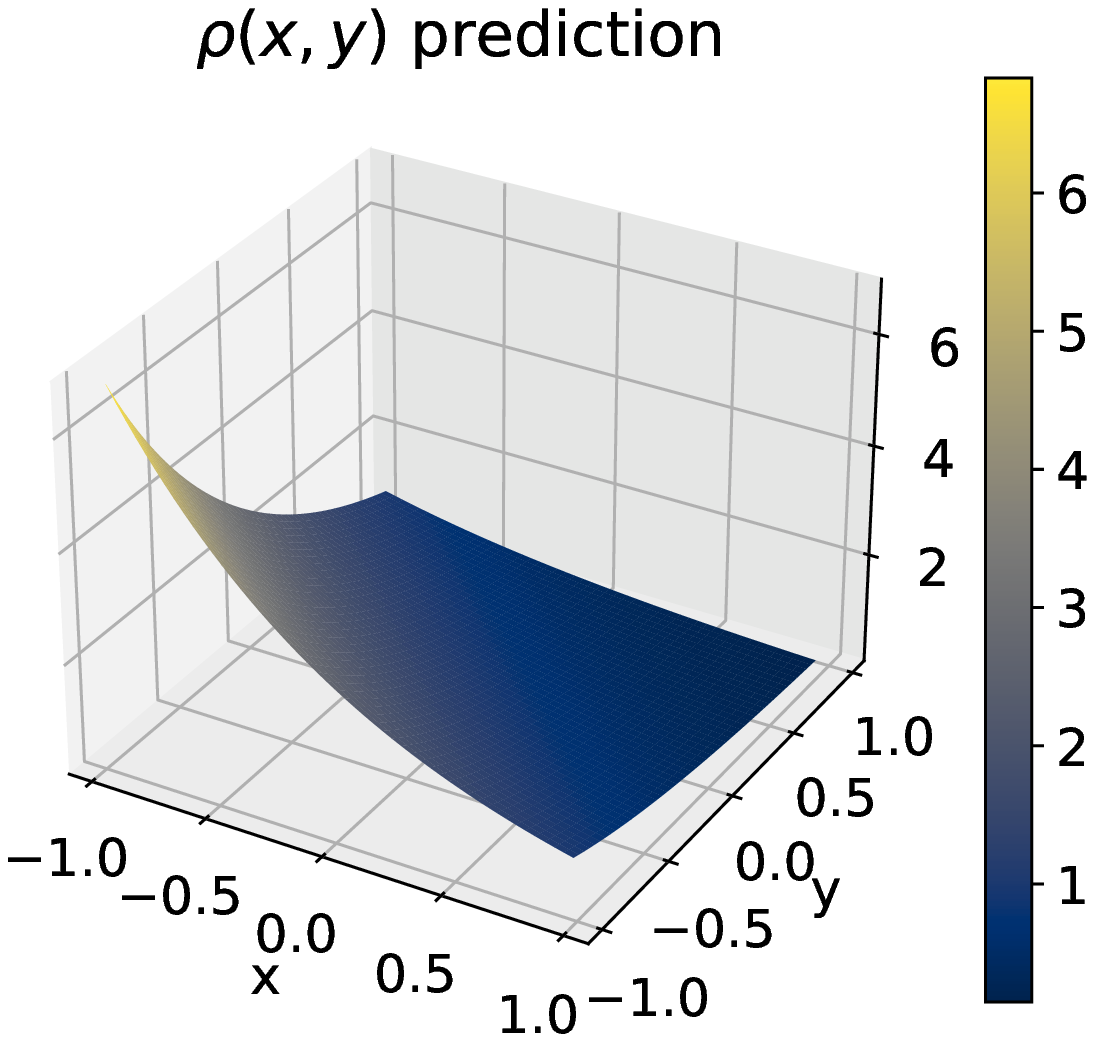}}
	{\includegraphics[width=0.3\textwidth]{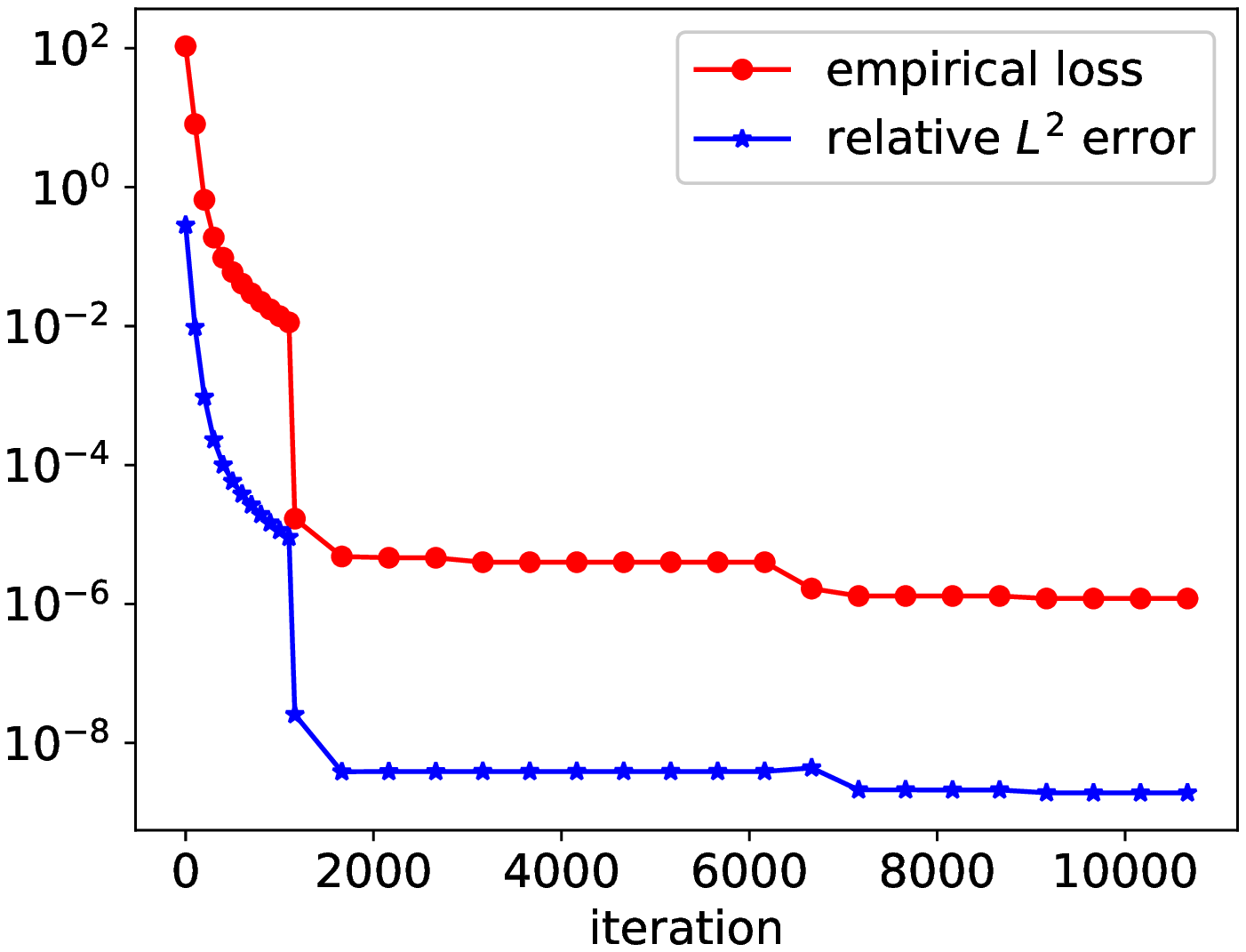}}
	{\includegraphics[width=0.3\textwidth]{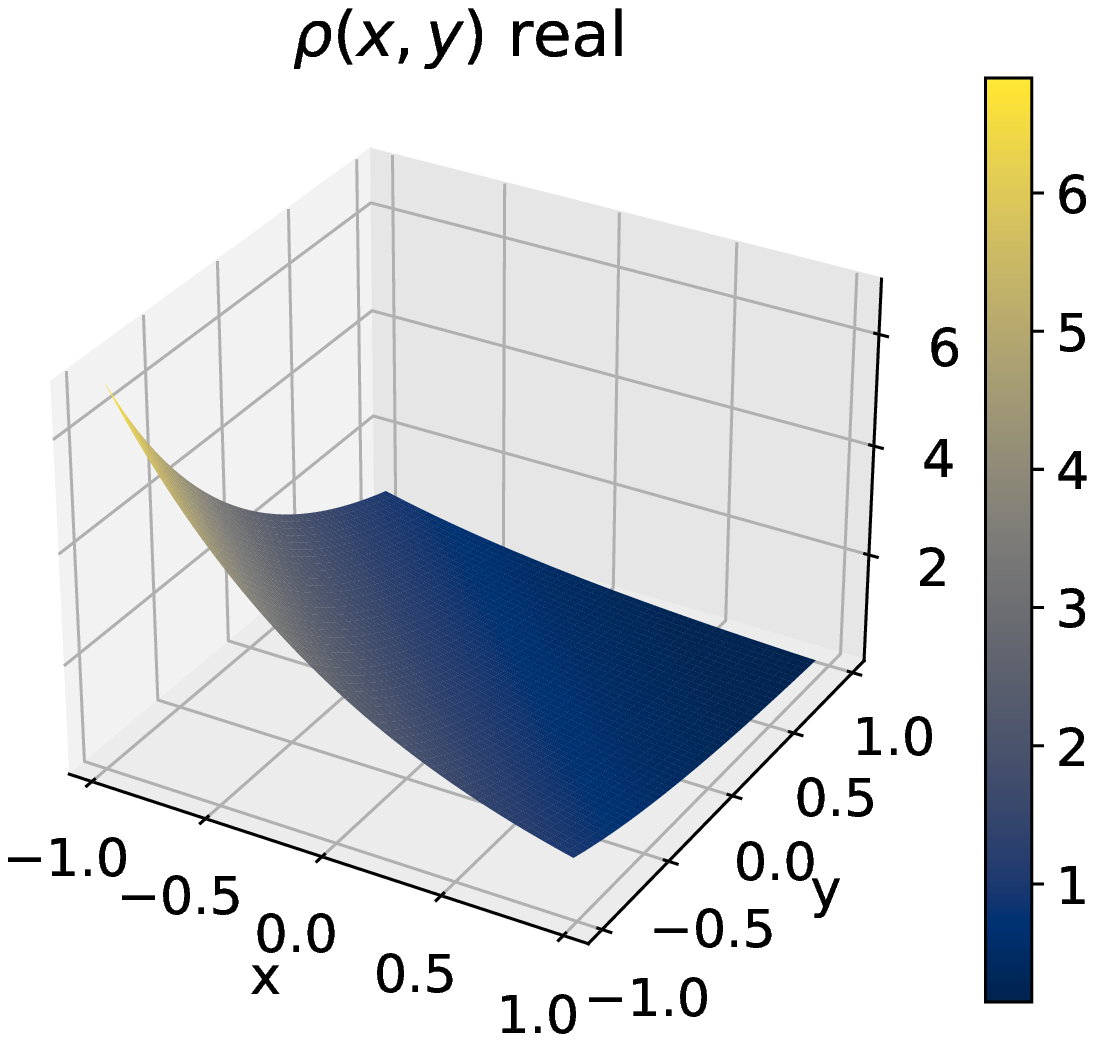}}
	{\includegraphics[width=0.3\textwidth]{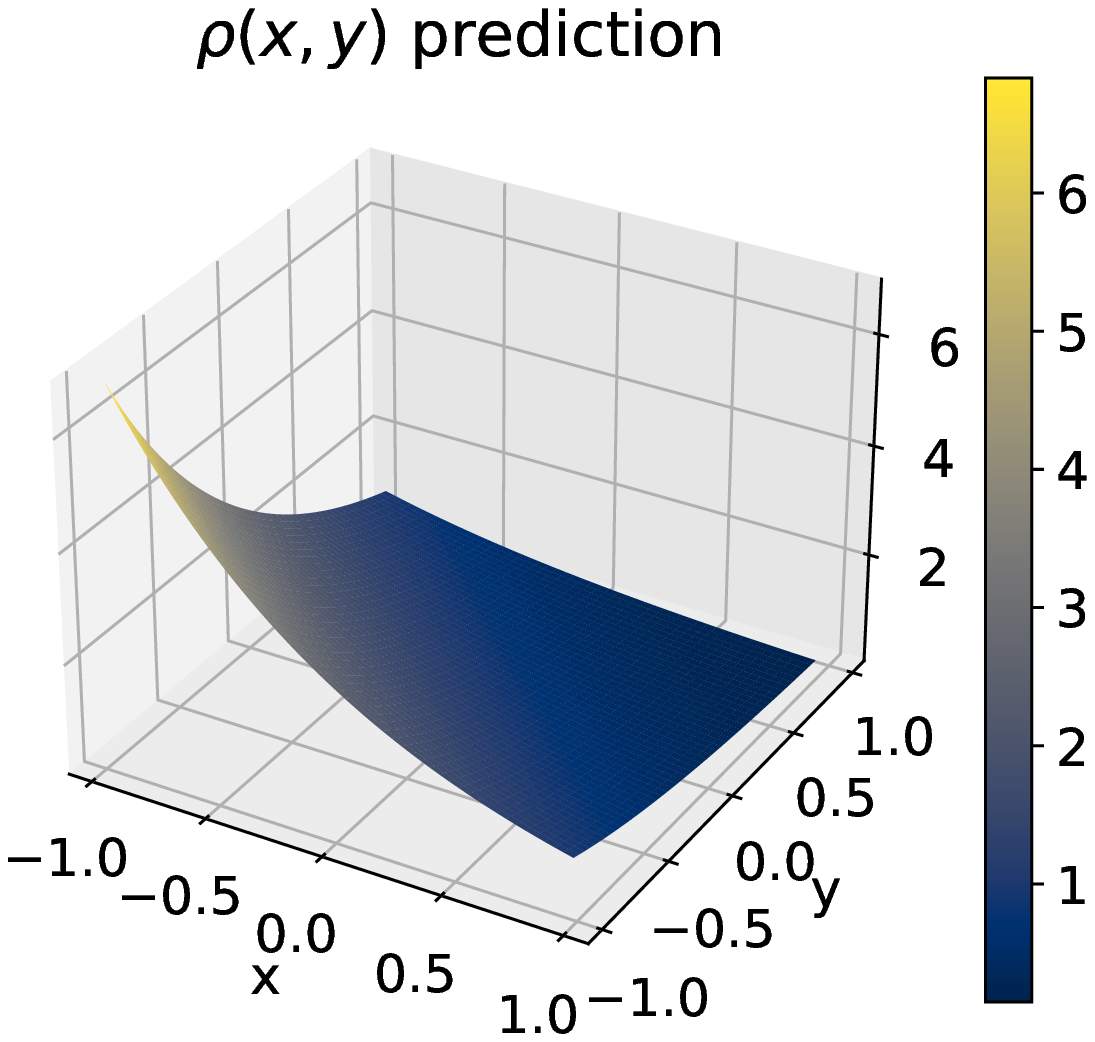}}
	{\includegraphics[width=0.3\textwidth]{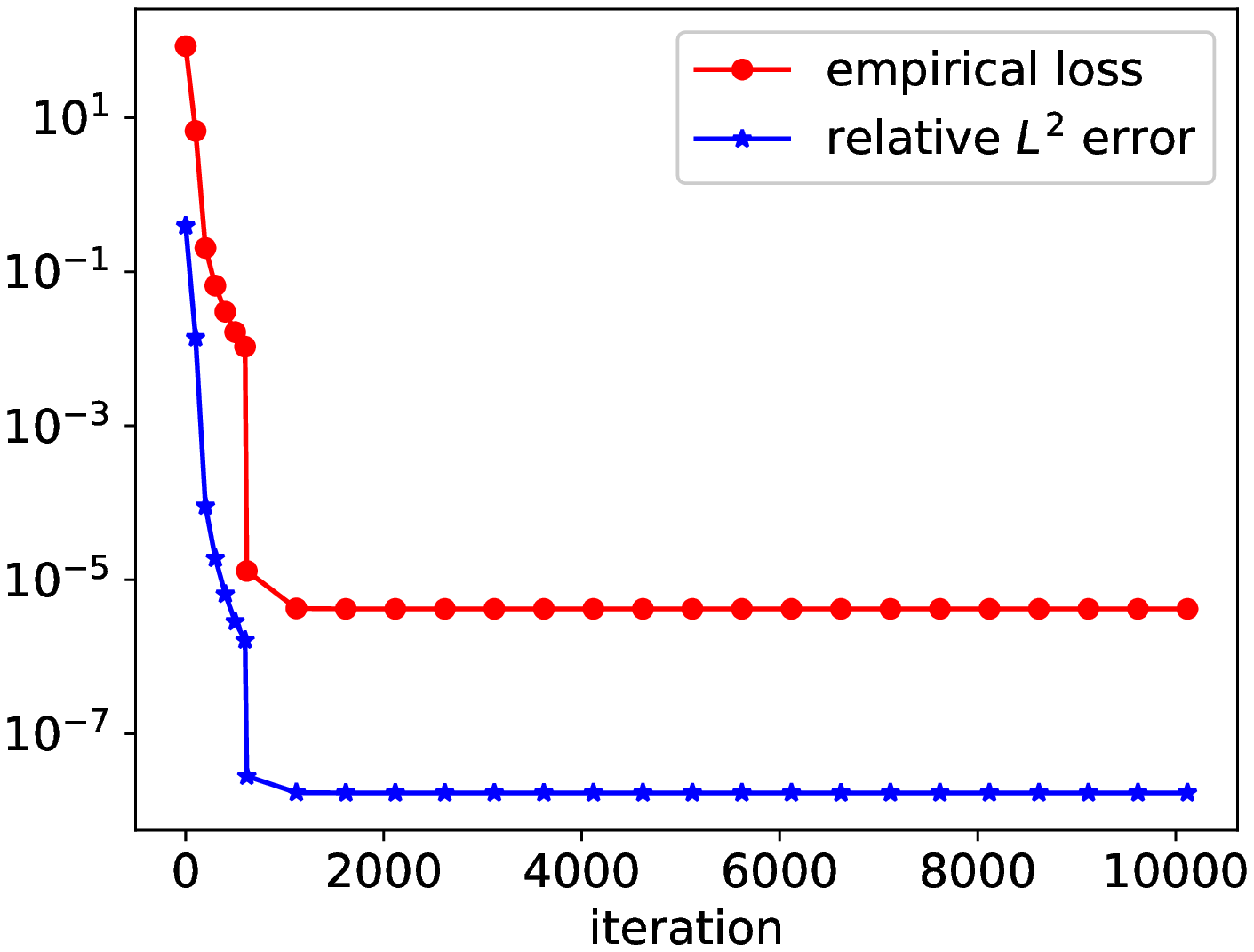}}
	\caption{Example~\ref{ex:analytic} with $\eps = 1$ in top row and $\eps = 0.001$ in bottom row. The left column is prediction of $\rho(x,y)$. The middle column is analytic $\rho(x,y)$. Right column is empirical loss and relative $L^2$ error.}
	\label{fig:2d_analytic}
\end{figure}

% \begin{figure}[htbp]
	% \centering
	% {\includegraphics[width=0.3\textwidth]{rg_2D_any_rho_real_epszpzz1.eps}}
	% {\includegraphics[width=0.3\textwidth]{rg_2D_any_rho_pred_epszpzz1.eps}}
	% {\includegraphics[width=0.3\textwidth]{rg_2D_any_error_re_epszpzz1.eps}}
	% \caption{Example~\ref{ex:analytic} with $\eps = 10^{-3}$. 
		% %we use $n_l = 4$, $n_r = 30$, $N^r_x = 40$, $N^r_y=40$, $N^r_v=40$,  $N^b_v=40$, $N^b_x = 40$, $N^b_y = 40$ and $N^b_v=40$ for training. 
		% The left is prediction of $\rho(x,y)$. The middle is analytic $\rho(x,y)$. Right is empirical loss and relative $L^2$ error.}
	% \label{fig:2d_analytic_2}
	% \end{figure}

\begin{example}\label{ex:inhomo}
	1D problem with spatially heterogeneous scattering: 
	\begin{equation*} %\label{eqn:1d_iso_vary}
		\begin{cases}{}
			v \partial_x f = \sigma(x)  (\langle f \rangle -  f) \,, \\
			f(0, v>0)= 5, \qquad 
			f(1, v<0)= 0\,,
		\end{cases}
	\end{equation*}
	where $\sigma$ is a smooth varying function 
	$\sigma(x) = 1+{b}/{e^{-a(x-0.5)}}$.
\end{example}

In practice, we rewrite the equation as 
\begin{equation*} %\label{eqn:1d_vary_eps}
	\eps(x) v \partial_x f =   \langle f \rangle -  f \,, \quad \textrm{~with~} \quad \eps(x) = \frac{1+e^{-a(x-0.5)}}{b+1+e^{-a(x-0.5)}}\,.
\end{equation*}
Accordingly, our macro-micro decomposition reads $f(x,v) = \rho(x) + \eps(x) g(x,v)$, where $\rho$ and $g$ solve 
\begin{equation*} %\label{eqn:1d_rg_vary_eps}
	\begin{cases}{}
		\average{v \partial_x(\eps(x)  g)} = 0 \, ,\\
		v \partial_x (\rho + \eps(x) g) + g= 0 \, ,\\
		\rho(0) + \eps(0) g(0, v>0)= 5, \quad 
		\rho(1) + \eps(1) g(1, v<0)= 0.
	\end{cases}
\end{equation*}

Choosing $a=10$ and $b=20$, we plot the shape of $\sigma(x)$ and gather the corresponding numerical solutions in Figure~\ref{fig:1d_epsix}. Here the neural network is constructed using $n_l = 4$, $n_r = 50$, $N^r_x = 80$, $N^r_v=60$,  $N^b_v=60$; and trained with initial learning rate $0.001$ for Adam and decrease by a factor of $0.95$ after every 2000 steps. The reference solution is obtained by a finite difference method with $N_x = 200$ and $N_v = 80$. 
As expected, a good match to the reference solution is observed, and a good control of relative $L^2$ error is obtained.

\begin{figure}[htbp]
	\centering
	{\includegraphics[width=0.45\textwidth]{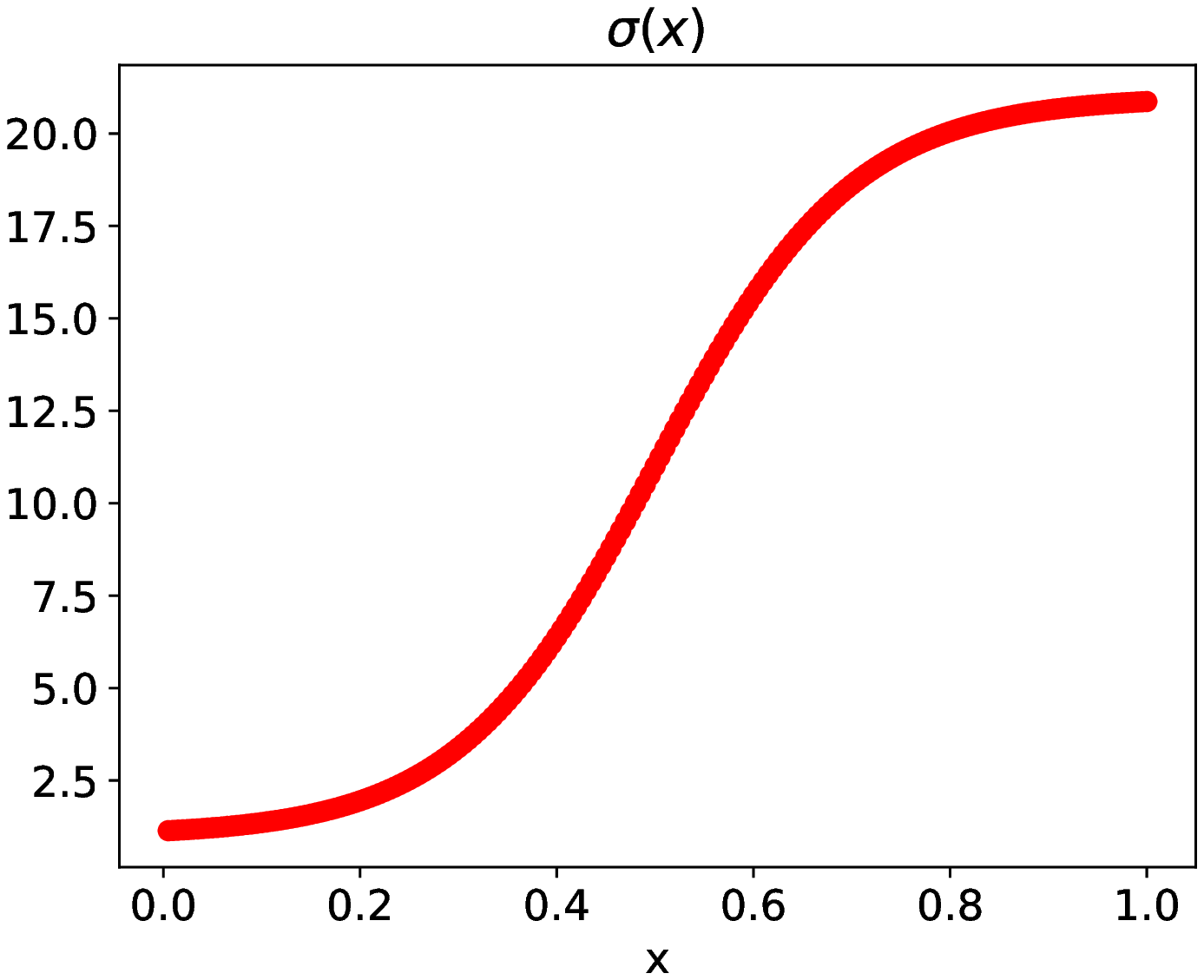}}   
	{\includegraphics[width=0.45\textwidth]{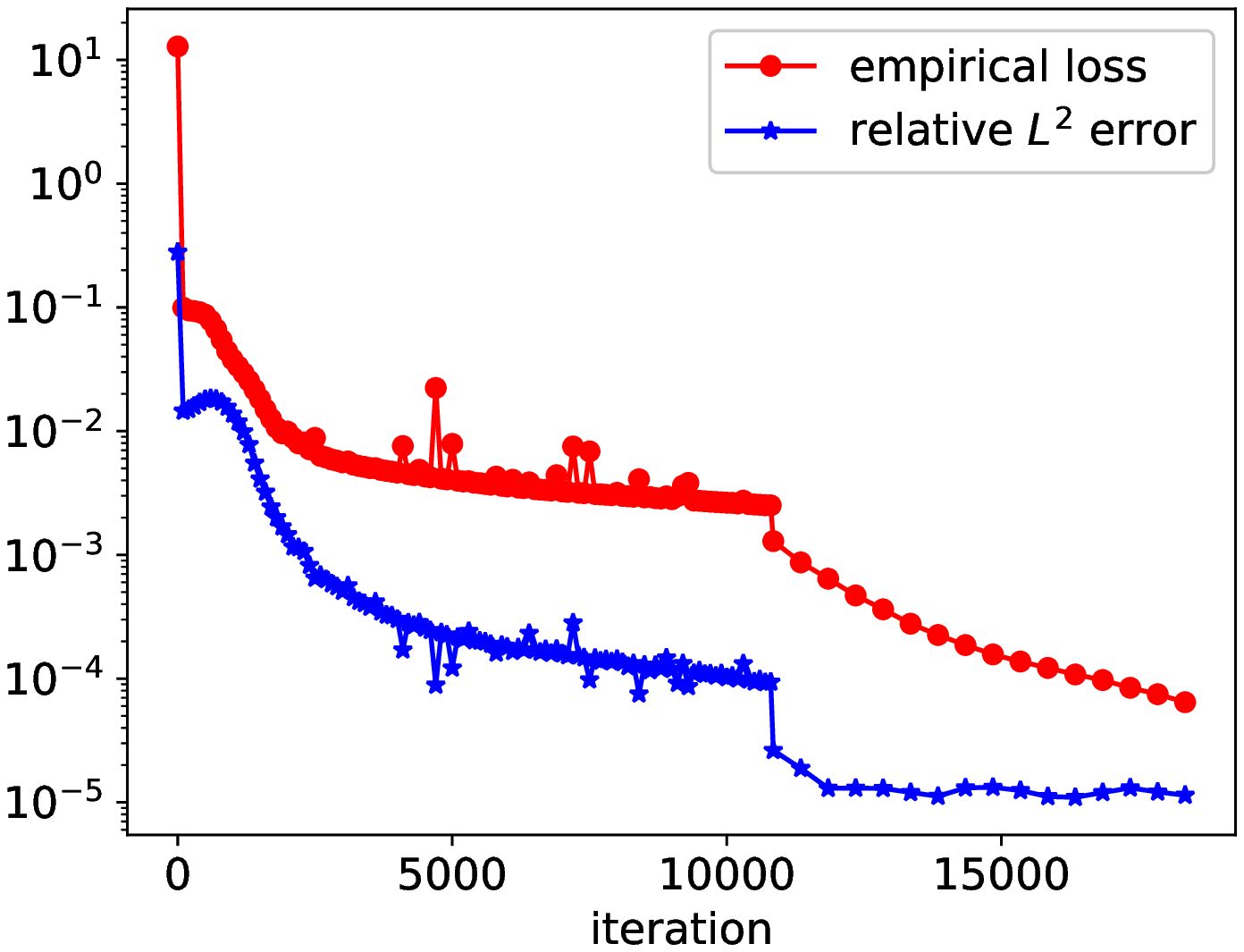}}
	{\includegraphics[width=0.45\textwidth]{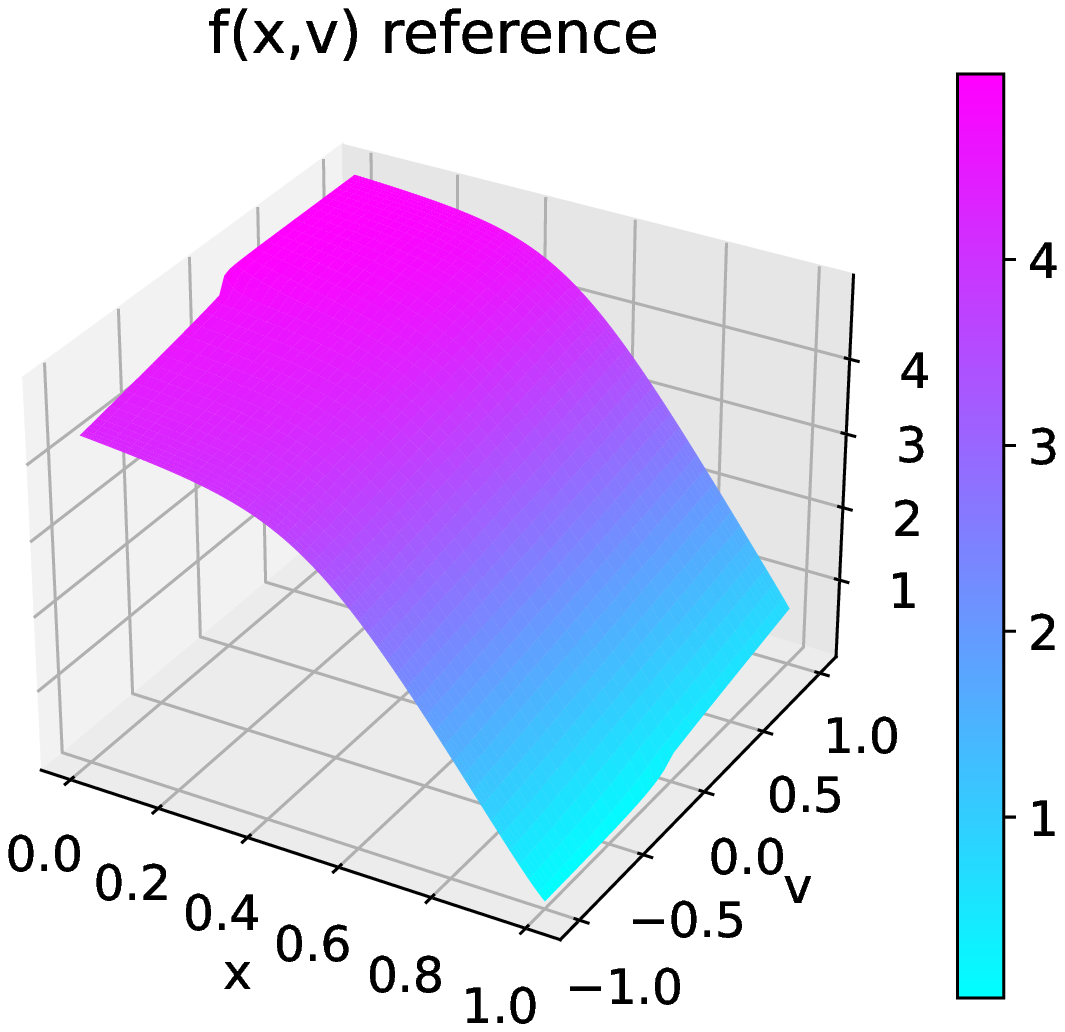}}
	{\includegraphics[width=0.45\textwidth]{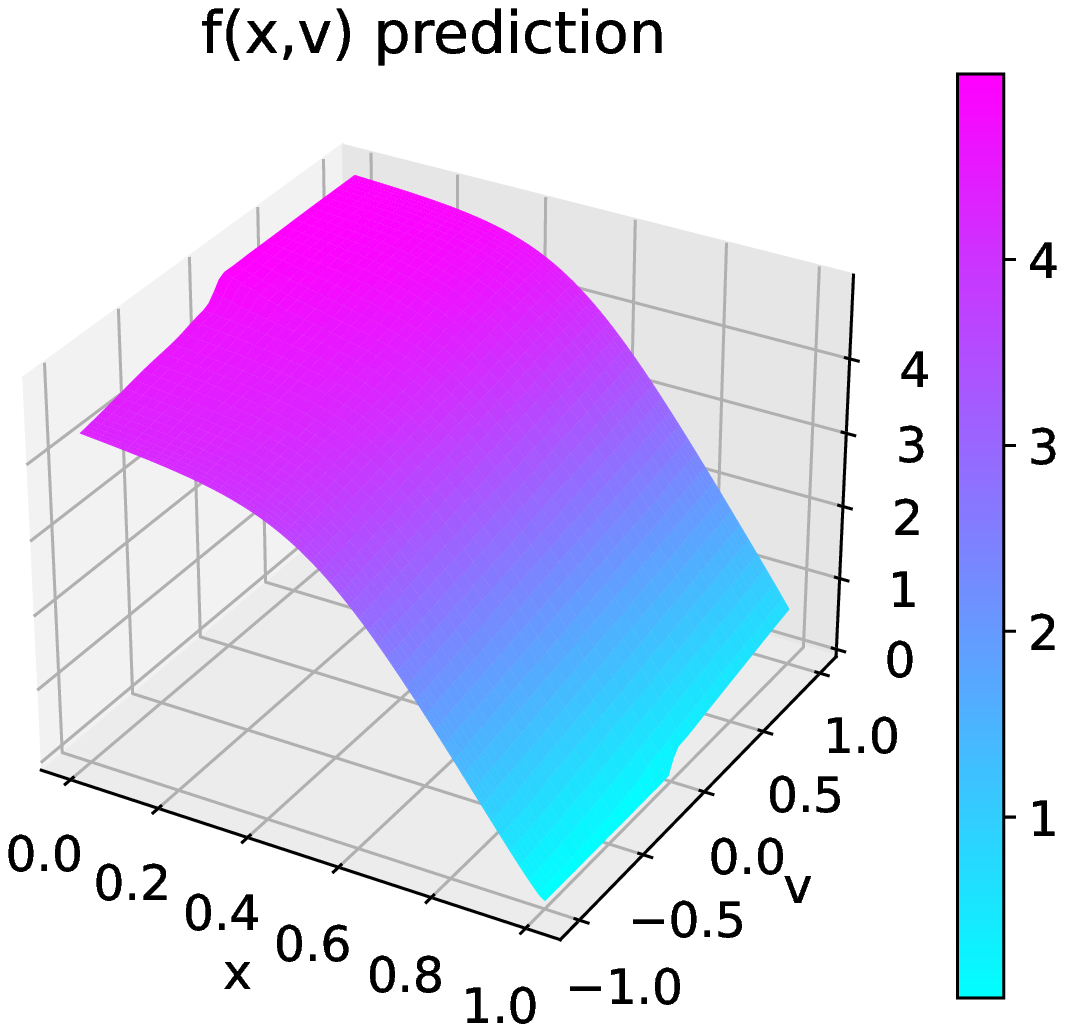}}
	
	\caption{Example~\ref{ex:inhomo} with $a=10$ and $b=20$.  The top left is the plot of $\sigma$, top right is the empirical loss and relative $L^2$ error, bottom left is referenced $f(x,v)$, and bottom right is the predicted $f(x,v)$ by neural network.  }
	\label{fig:1d_epsix}
\end{figure}

\begin{example}\label{ex:aniso}
	2D RTE with an-isotropic scattering: 
	\begin{equation*} %\label{eqn:2D_aniso}
		\begin{cases}
			\eps \boldsymbol{v} \cdot \nabla_{\boldsymbol{x}} f =  \int_{|\boldsymbol{v}|=1} K(\bv, \bv') f(\bx, \bv) d \bv' - f, \quad \bx \in [-1,1]^2, \bv = (\cos \alpha, \sin \alpha)  \, \\
			f(-1, y, \alpha) =  (1-y^2) , ~ \alpha \in [0, \pi/2] \cup [3 \pi/2, 2 \pi]  \, \\
			f(1, y ,\alpha) = 0,  ~ \alpha \in [\pi/2, 3 \pi /2]  \, \\
			f(x, -1 ,\alpha) = 0,  ~ \alpha \in [0, \pi ] \, \\
			f(x, 1 ,\alpha) = 0,  ~ \alpha \in [\pi, 2 \pi ] \,,
		\end{cases}
	\end{equation*}
	with Henyey-Greenstein scattering
	kernel: 
	\begin{equation*}
		K(\bv, \bv') = \frac{1-h^2}{2 \pi (1+h^2 - 2h \bv \cdot \bv')}\,, \quad h \in (0,1)\,.
	\end{equation*}
\end{example}

% where $\beta$ is the angle between $\bv$ and $\bv'$, which allows us to rewrite the kernel as 
% \begin{equation}
	%     K(\alpha, \alpha') = \frac{1-h^2}{2 \pi (1+h^2 - 2h \cos (\alpha - \alpha'))}
	% \end{equation}

The numerical solutions with $\eps=1$ and $\eps=0.001$ are presented in Figure~\ref{fig: hsp_2d_aniso_1}. Here the neural network is constructed with  $n_l = 4$, $n_r = 30$, $N^r_x = 40$, $N^r_y=40$, $N^r_v=40$,  $N^b_v=40$, $N^b_x = 40$, $N^b_y = 40$ and $N^b_v=40$; and the reference solution is obtained by a finite difference method with $N_x = 60$, $N_y = 60$, $N_v=40$.
\begin{figure}[htbp]
	\centering
	{\includegraphics[width=0.3\textwidth]{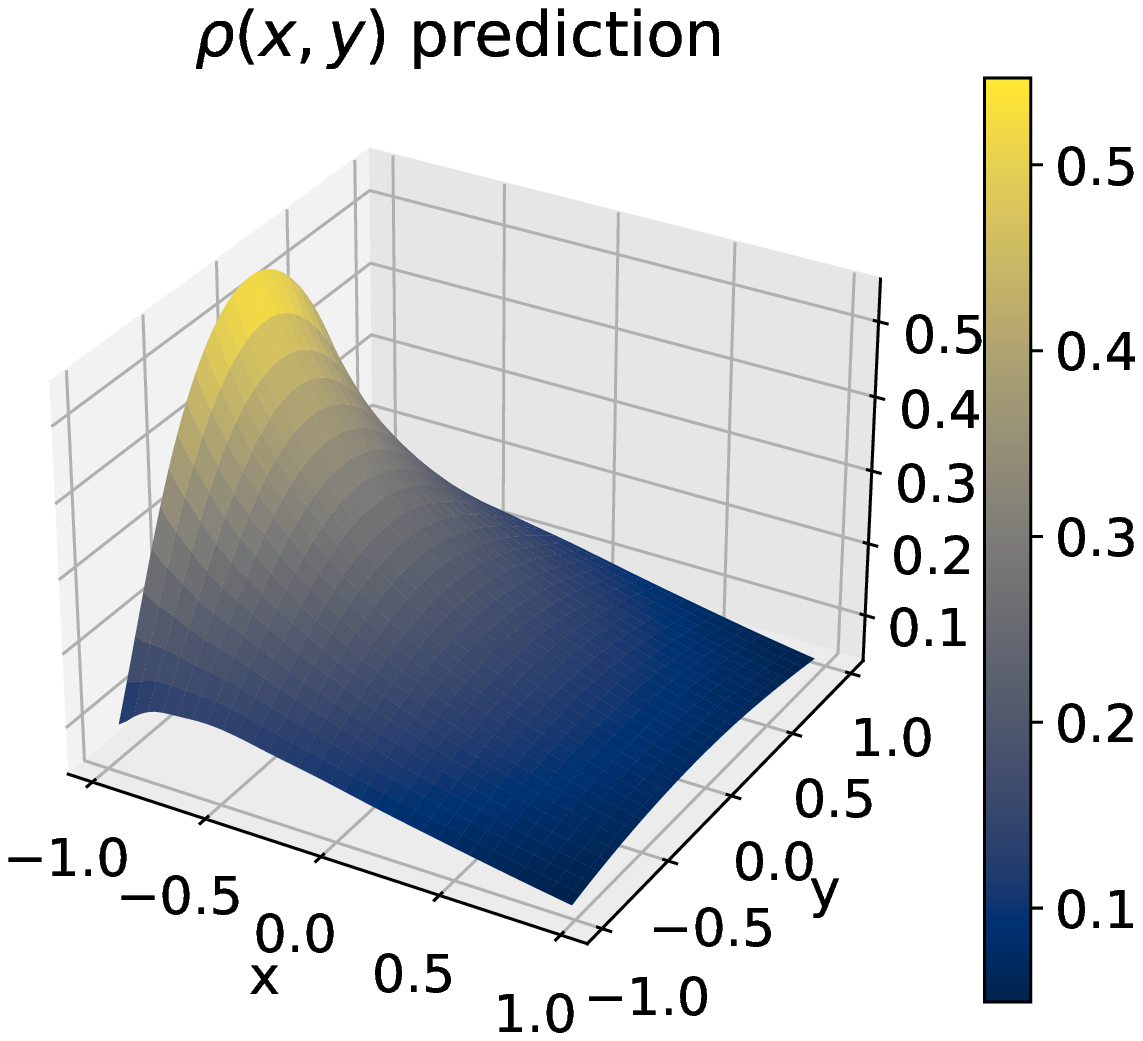}}
	{\includegraphics[width=0.3\textwidth]{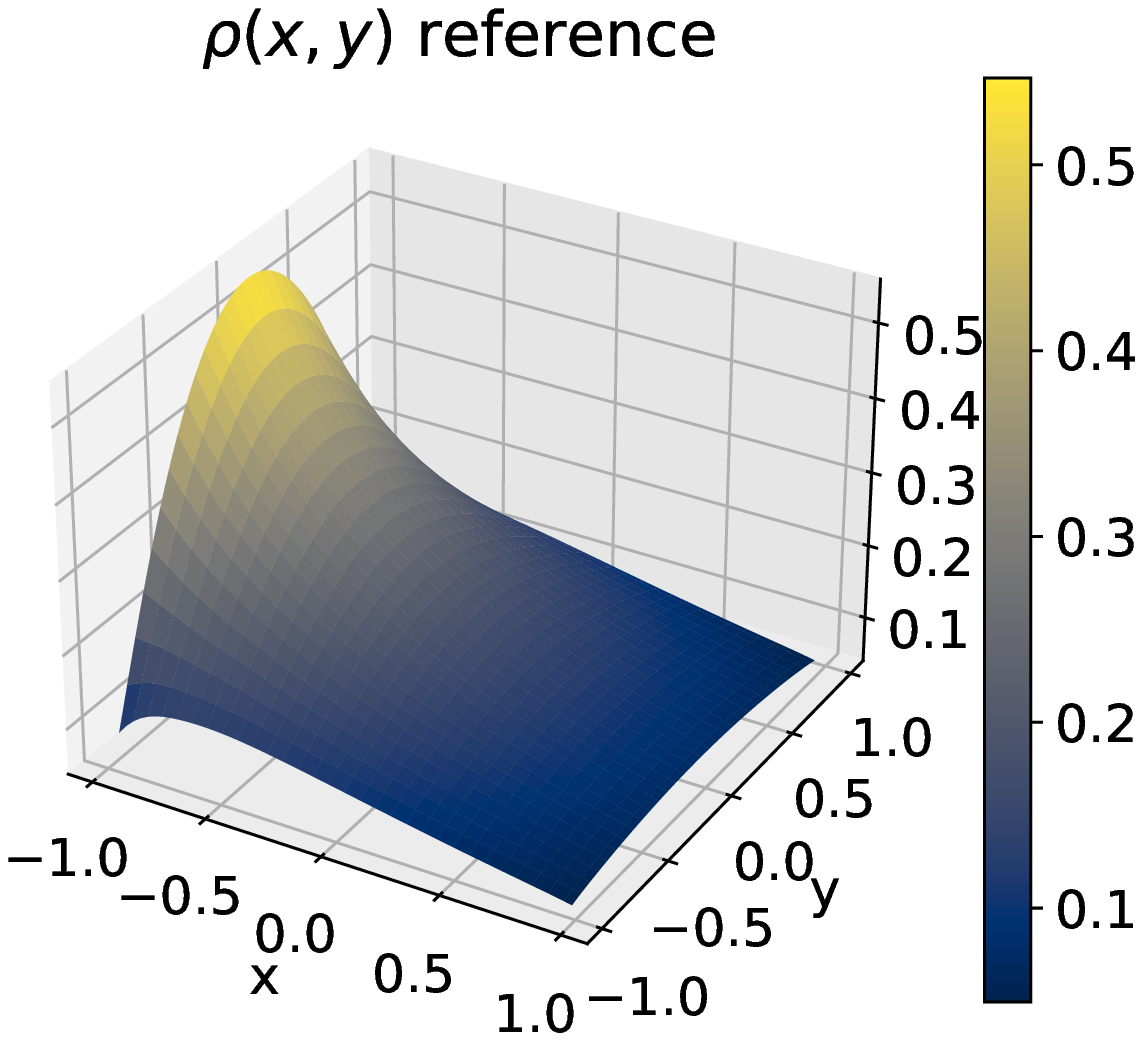}}
	{\includegraphics[width=0.3\textwidth]{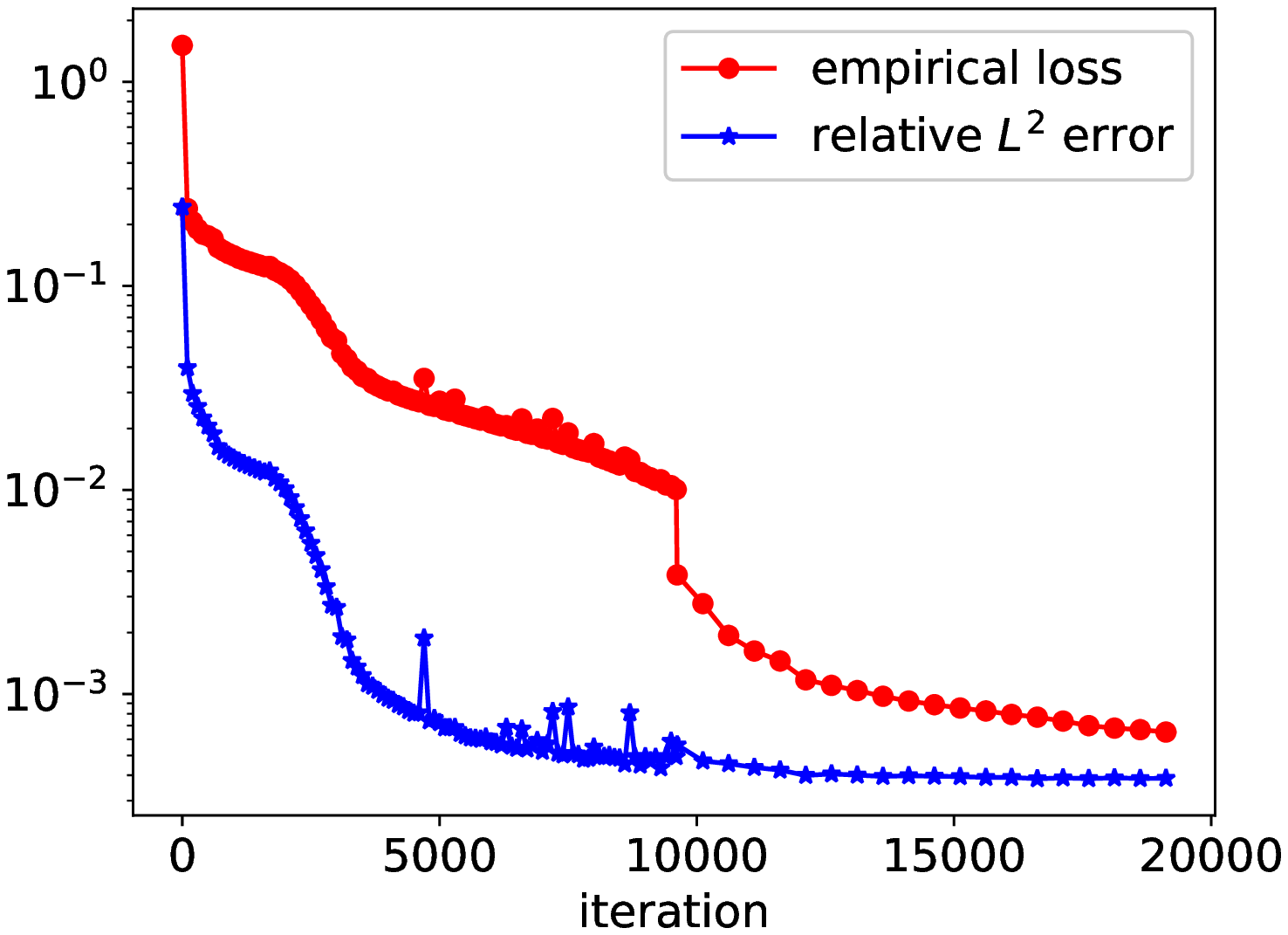}}
	{\includegraphics[width=0.3\textwidth]{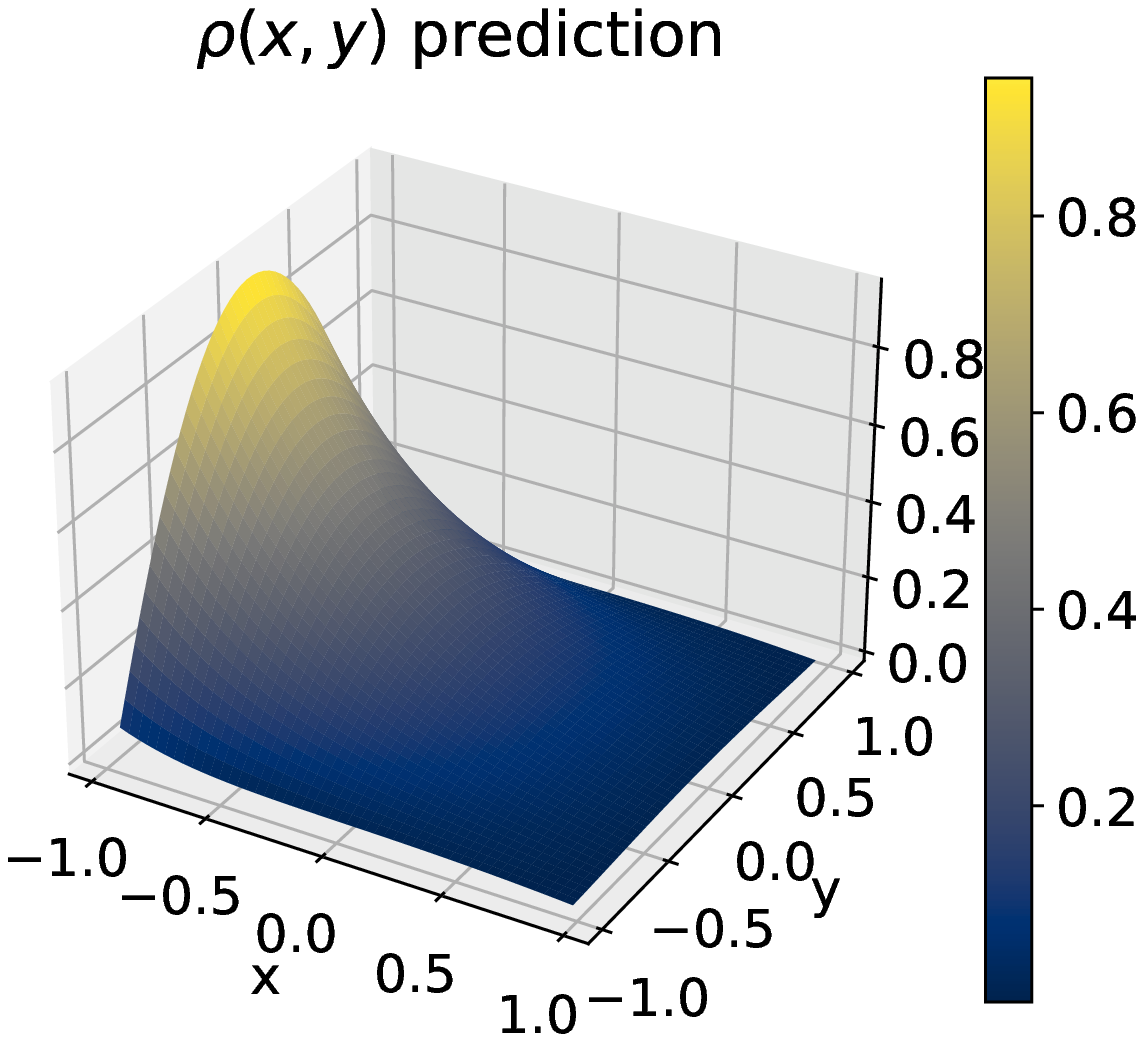}}
	{\includegraphics[width=0.3\textwidth]{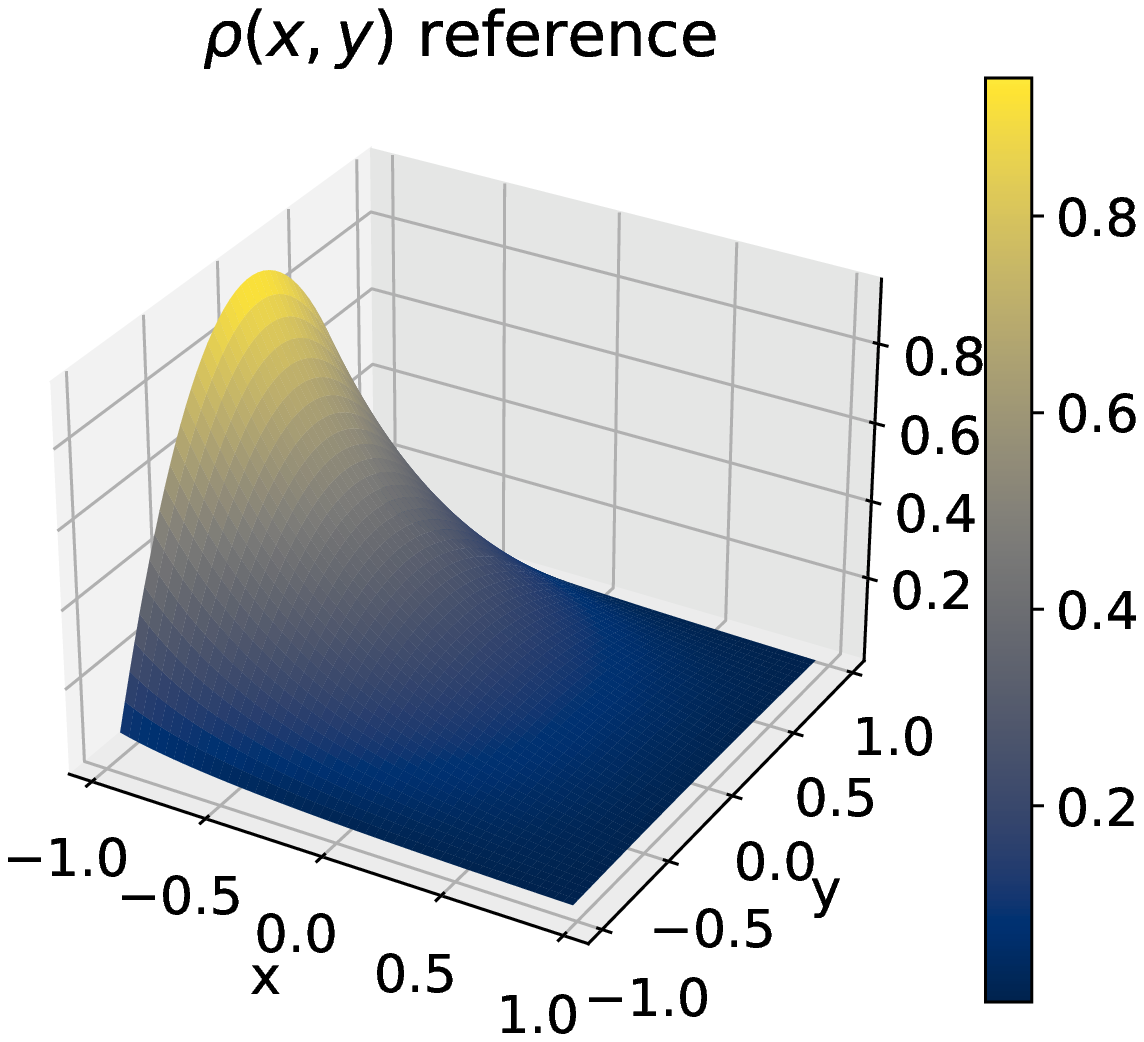}}
	{\includegraphics[width=0.3\textwidth]{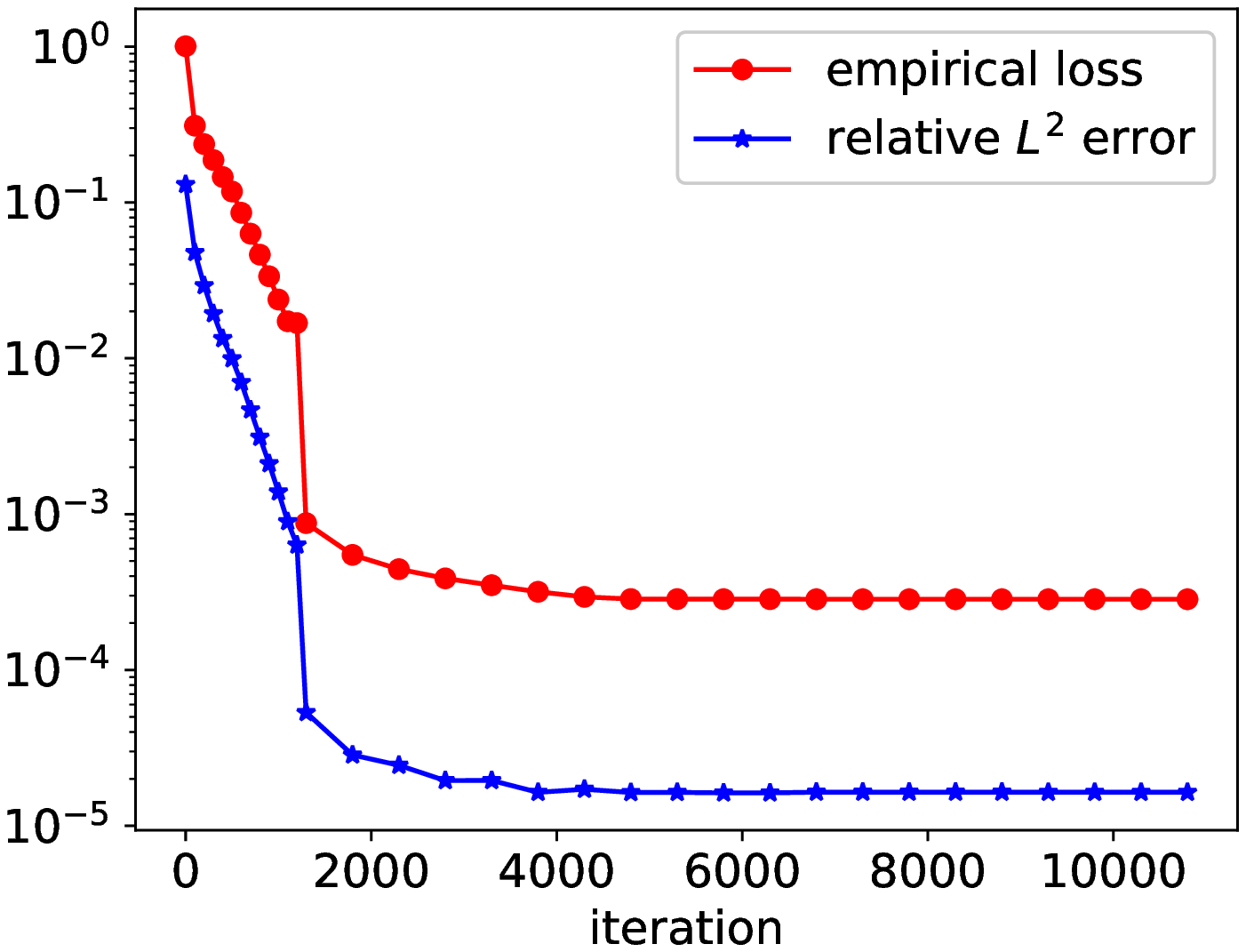}}
	\caption{Top and bottom are Example~\ref{ex:aniso} with $\eps = 1$ and $\eps = 0.001$ respectively. }
	\label{fig: hsp_2d_aniso_1}
\end{figure}

\subsection{1D RTE with boundary layer}\label{sec:1D2}
Here we consider a one dimensional example with velocity dependent boundary condition. With a velocity-dependent boundary term, a boundary layer is present in the solution of RTE when  $\eps$ is small. To find such solution, we first solve the half space problem. 
\begin{example}\label{ex:1dhsp}
	1D half space problem: 
	\begin{equation} \label{eqn:hsp1d}
		\begin{cases}{}
			v \partial_z f_\bl(z, v) = \average{f_\bl} - f_\bl \,, \\
			f_\bl(0,v) = 5 \sin(v) .
		\end{cases}
	\end{equation}
\end{example}

For this problem, we know that its solution $f_\bl(z,v)$ admits an analytical limit 
\begin{equation} \label{BLinf}
	f_\bl^{\infty} = \frac{\sqrt{3}}{2} \int_0^1 5 \sin(v) H(v) v \rd v %\approx 3.1889
\end{equation}
from the Chandrasekhar H-function, which satisfies
\[
\frac{1}{H(v)} = \int_0^1 \frac{H(w)}{2(v+w)}w \rd w\,.
\]
Additionally,  the reflection boundary condition has the form
\begin{equation} \label{eqn1010}
	f_\bl(0,v<0) = \half H(v) \int_0^1 5 \sin(w) \frac{H(w)}{w+v} w \rd w\,.
\end{equation}
In Figure~\ref{fig: hsp_1d}, we plot the numerical solution to $\eqref{eqn:hsp1d}$, and compare its reflection boundary with \eqref{eqn1010} with good agreement. 
\begin{figure}[htbp]
	\centering
	{\includegraphics[width=0.4\textwidth]{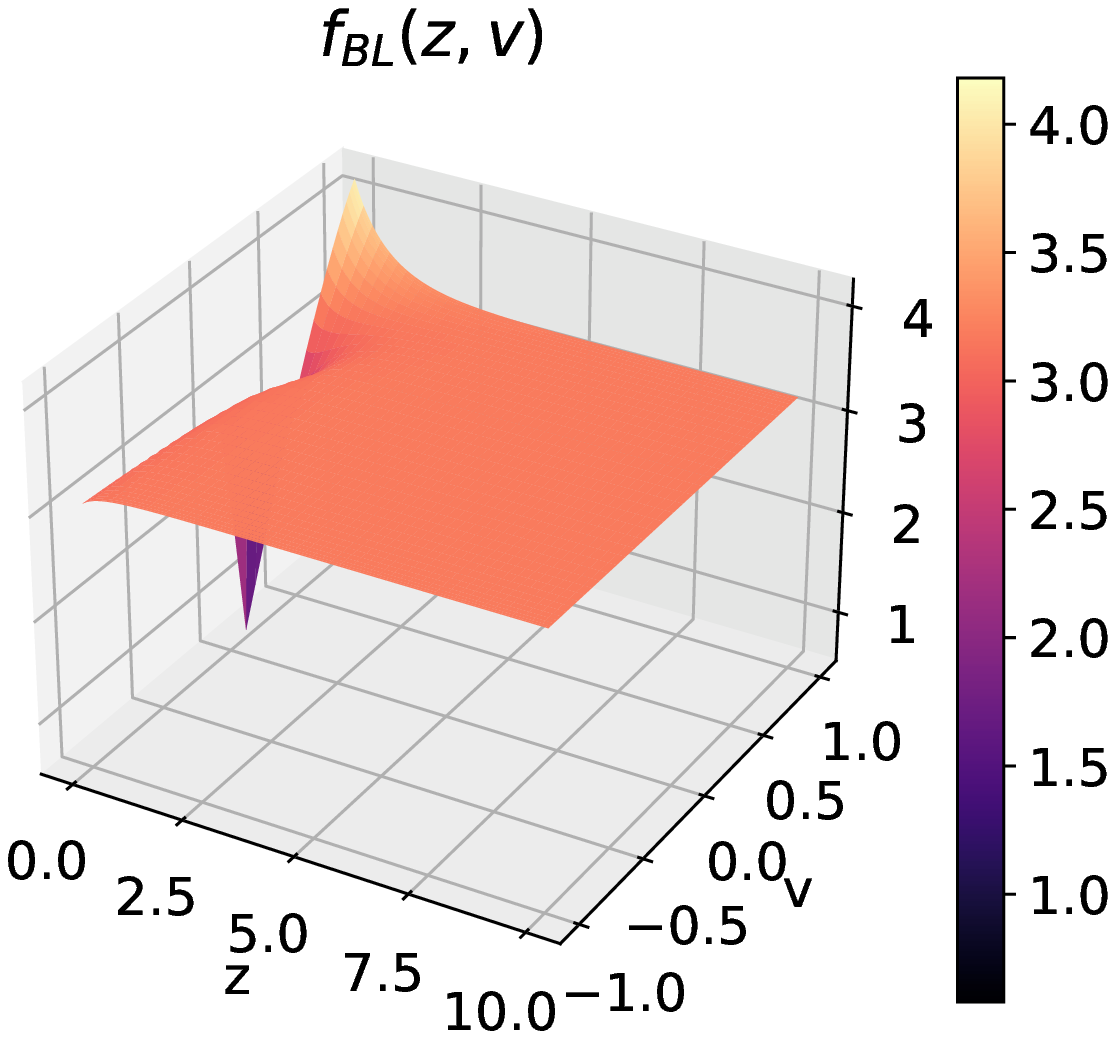}}
	{\includegraphics[width=0.4\textwidth]{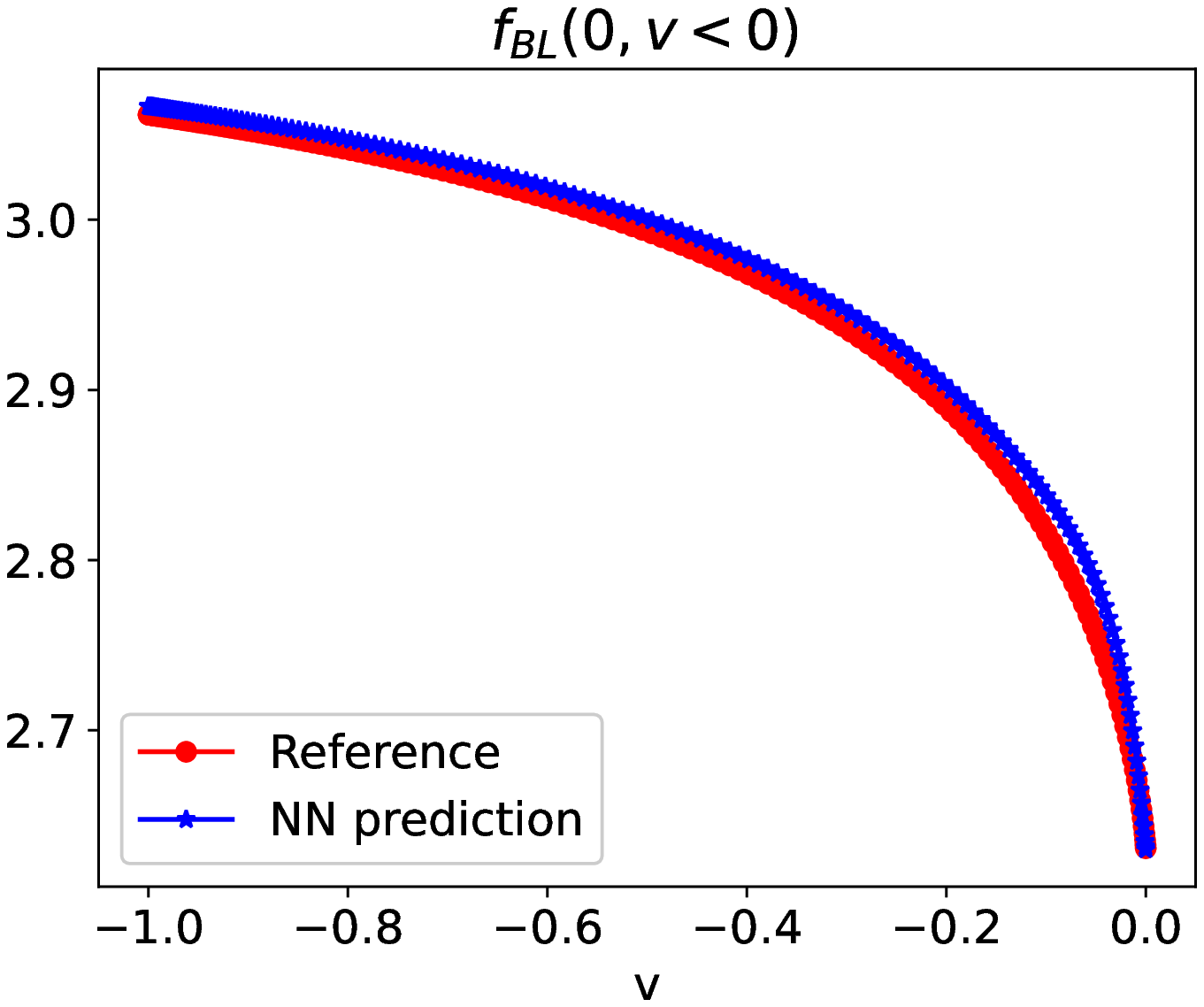}}
	\caption{Example~\ref{ex:1dhsp}, here we use $nl = 4$, $nr = 50$, $N^r_z = 400$, $N^r_v=40$ and $N^b_v=60$. The left is $f_\bl(x,v)$ prediction, and the right is the comparison between \eqref{eqn1010} and the prediction by Neural network. The numerical approximation of the limit constant $f^{nn}_\bl(10,0) \approx 3.1919$ and the exact $f_\bl^{\infty} \approx 3.1889$.}
	\label{fig: hsp_1d}
\end{figure}

\begin{example}\label{ex:transp}
	We then proceed to solve the transport equation:
	\begin{equation} \label{eqn:1d_iso_bl}
		\begin{cases}{}
			\eps v \partial_x f = \langle f \rangle - f \,, \\
			f(0, v>0)= 5 \sin v, \quad 
			f(1, v<0)= 0.
		\end{cases}
	\end{equation}
\end{example}

\begin{figure}[htbp]
	\centering
	{\includegraphics[width=0.35\textwidth]{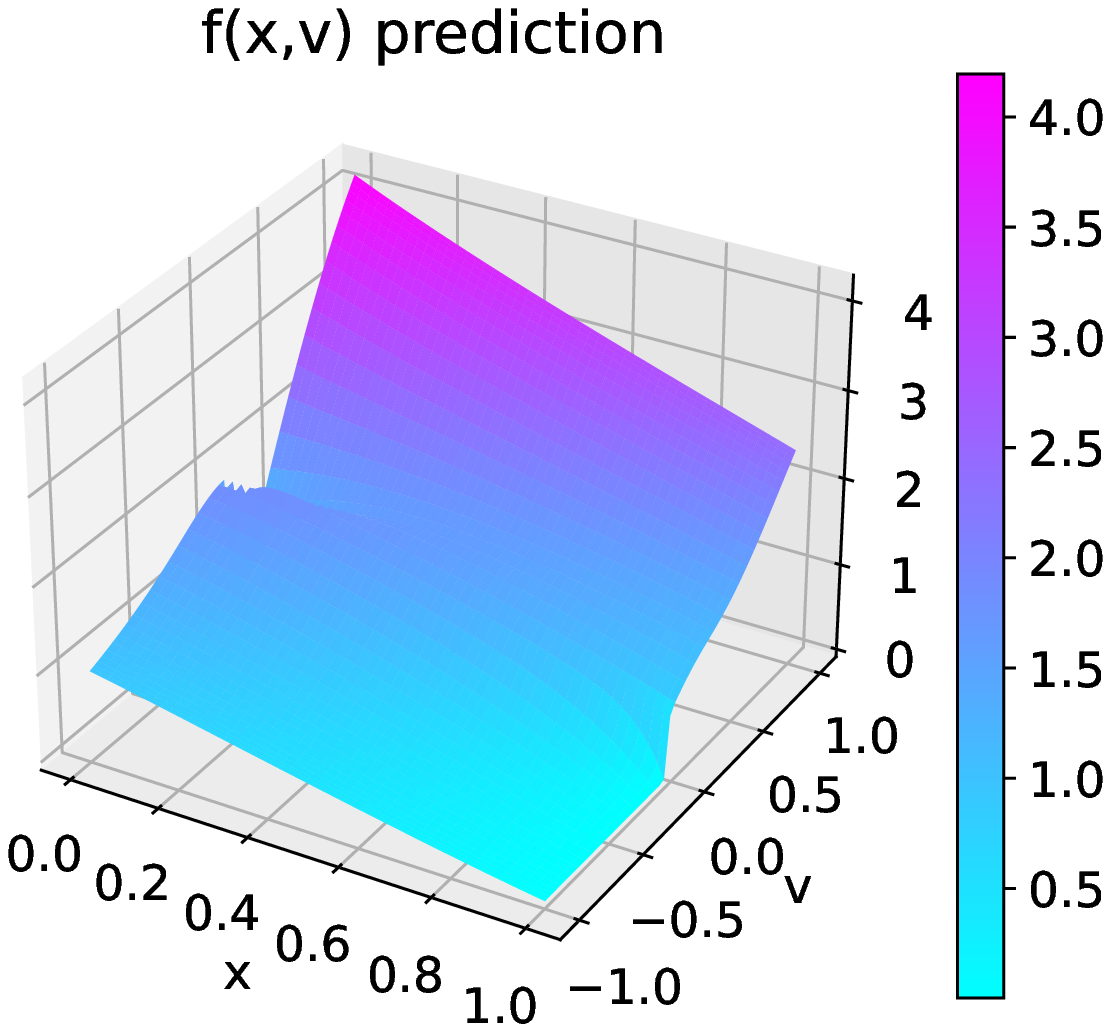}}
	{\includegraphics[width=0.35\textwidth]{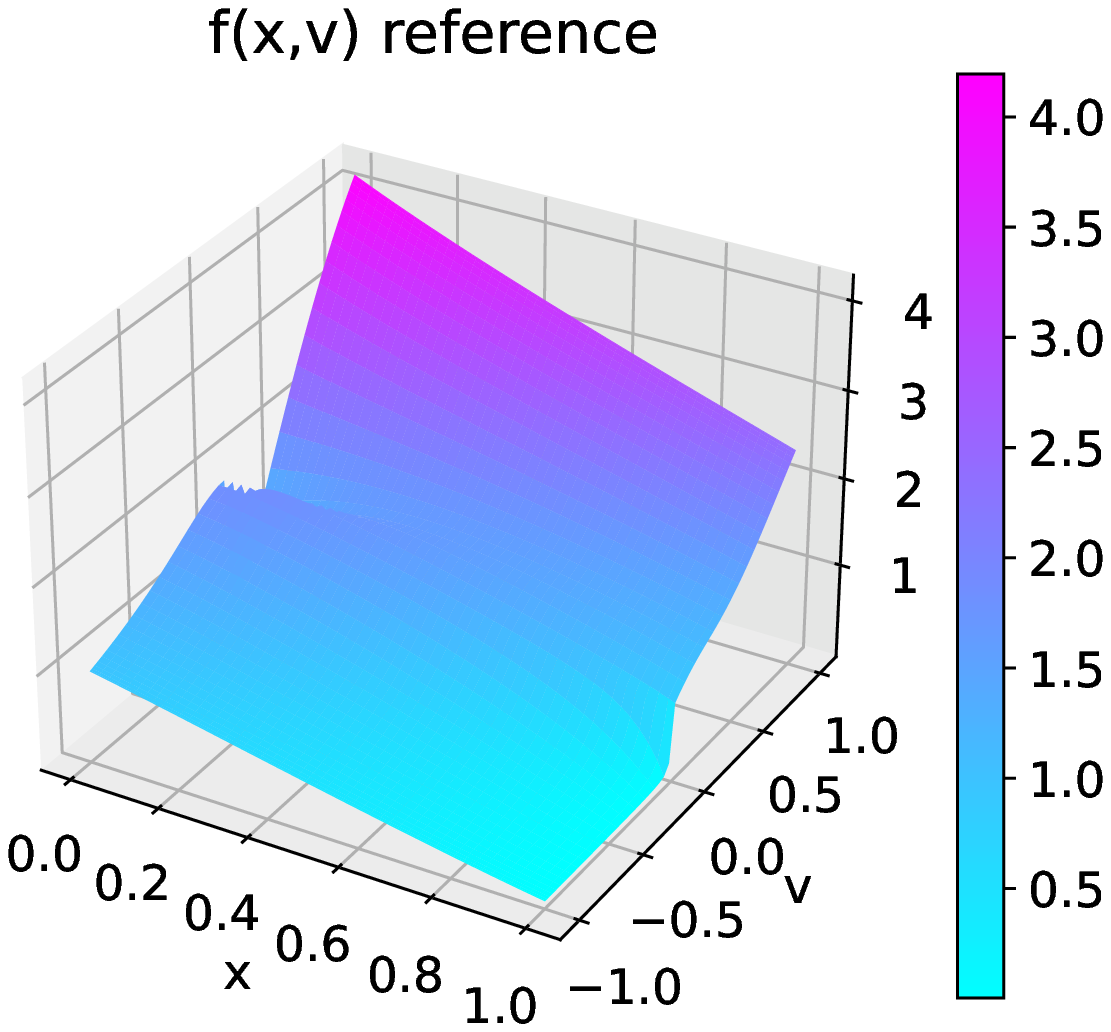}}
	{\includegraphics[width=0.35\textwidth]{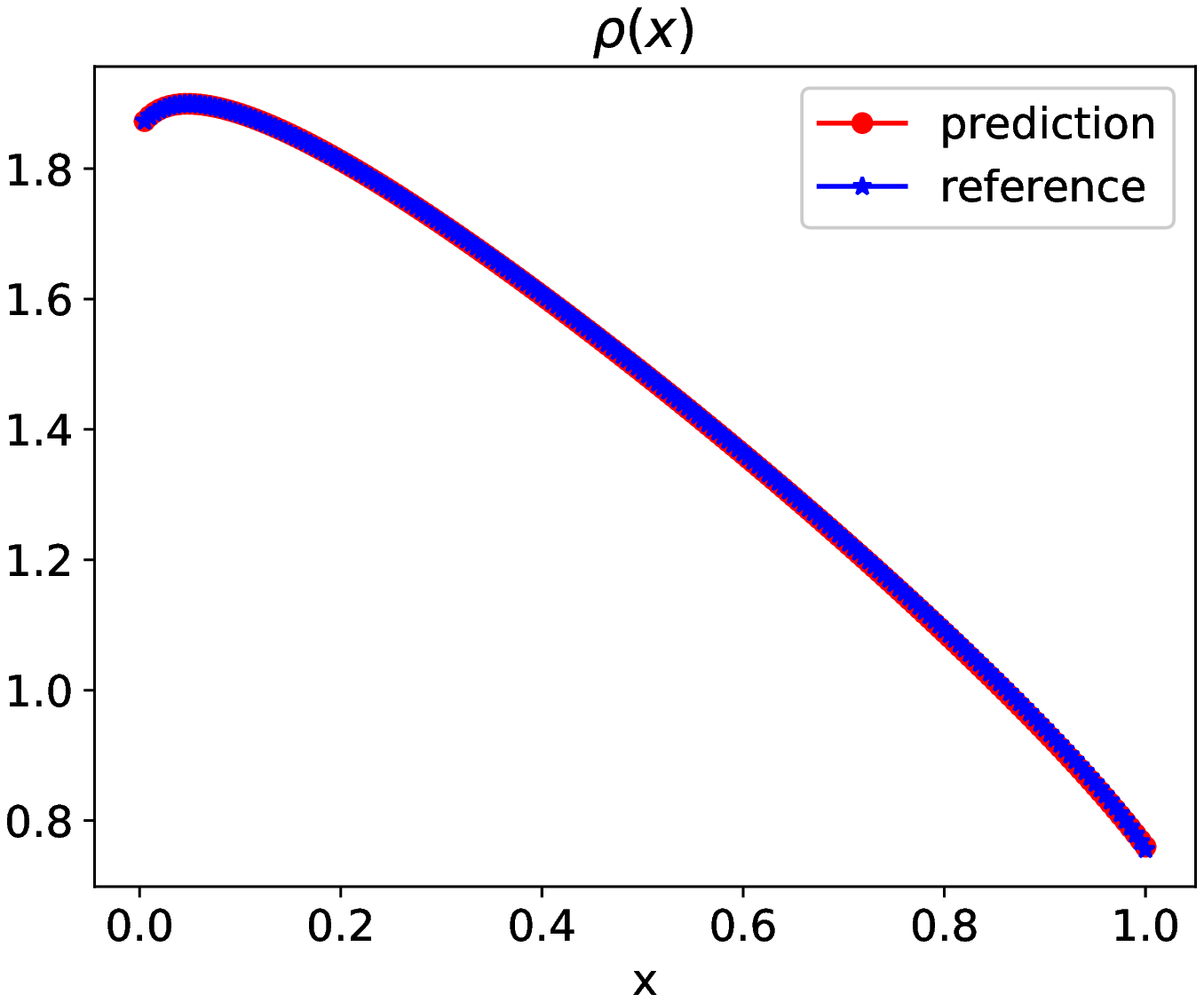}}
	{\includegraphics[width=0.35\textwidth]{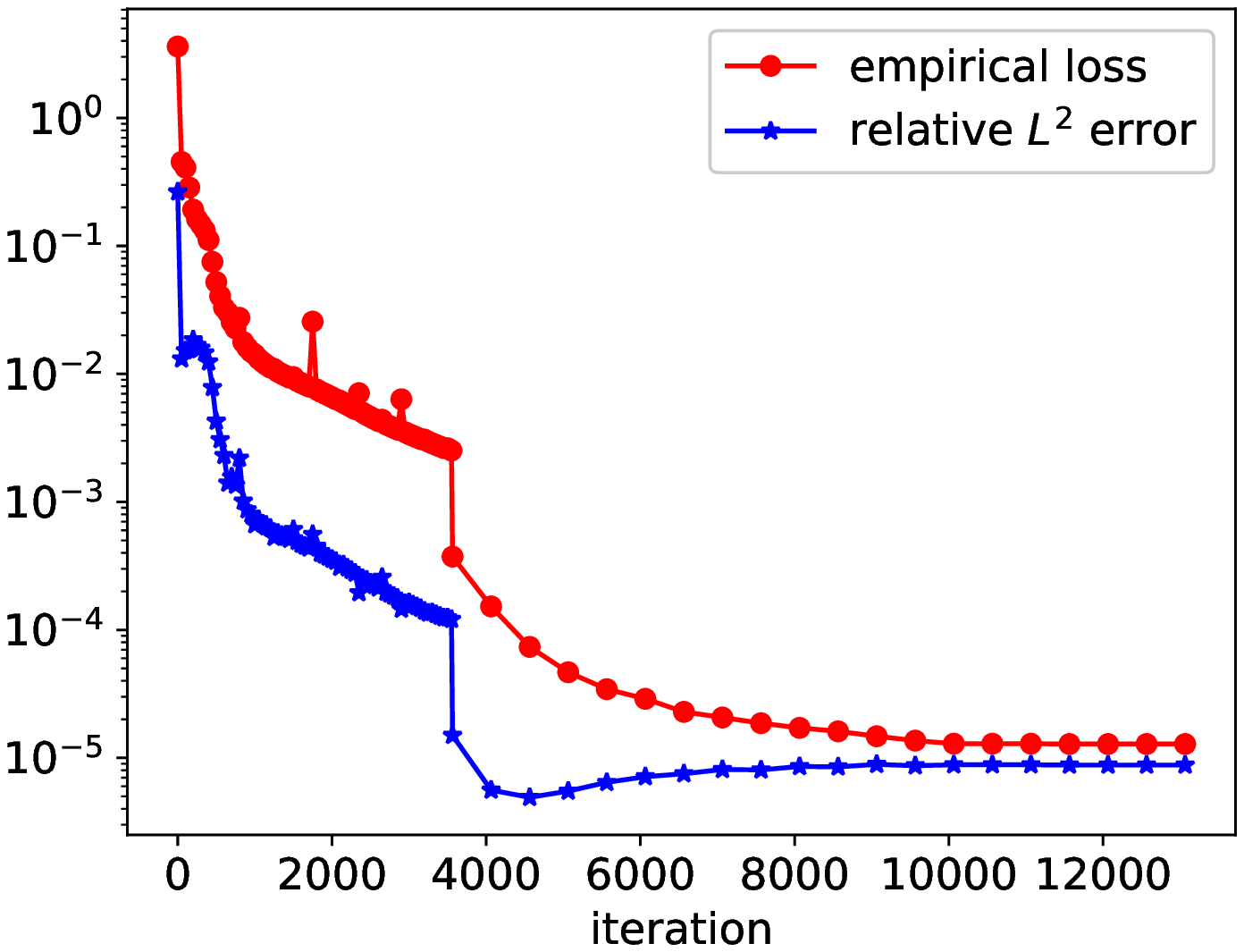}}
	\caption{Example~\ref{ex:transp} with $\eps = 1$. we use $nl = 4$, $nr = 50$, $N^r_x = 80$, $N^r_v=60$ and $N^b_v = 60$ for training. Compute the reference solution by finite difference method on test set with $N_x=200$, $N_v=80$. The top left is $f(x,v)$ prediction, the top right is the reference $f(x,v)$, bottom left is comparison of $\rho(x)$ and bottom right is the empirical loss and relative $L^2$ error vs  number of iterations.  }
	\label{fig: hsp_1d_bl_epsi1}
\end{figure}

When $\eps = 1$, we obtain the neural network prediction by using the macro-micro decomposition. The results are collected in Figure~\ref{fig: hsp_1d_bl_epsi1}, where the reference solution is obtained by solving \eqref{eqn:1d_iso_bl} with a finite difference method on a uniform mesh. On the other hand,  when $\eps  = 10^{-3}$, we obtain the neural network prediction by the macro-micro-boundary layer decomposition. For comparison, we construct two reference solution. One is obtained by the same finite difference method but on a non-uniform mesh in $x$, with 150 points in $[0, \eps)$ and 50 points in $[\eps,1]$. The other is obtained by solving the diffusion limit, whose boundary condition is computed via the H-function \eqref{BLinf}. In this specific example, we have $f_\bl^\infty = 3.188$, and therefore the limit density is $\rho_0 (x) = 3.188(1-x)$. The comparisons with good agreement are displayed in Figure~\ref{fig: hsp_1d_bl_epsizpzz1}.

\begin{figure}[htbp]
	\centering
	{\includegraphics[width=0.35\textwidth]{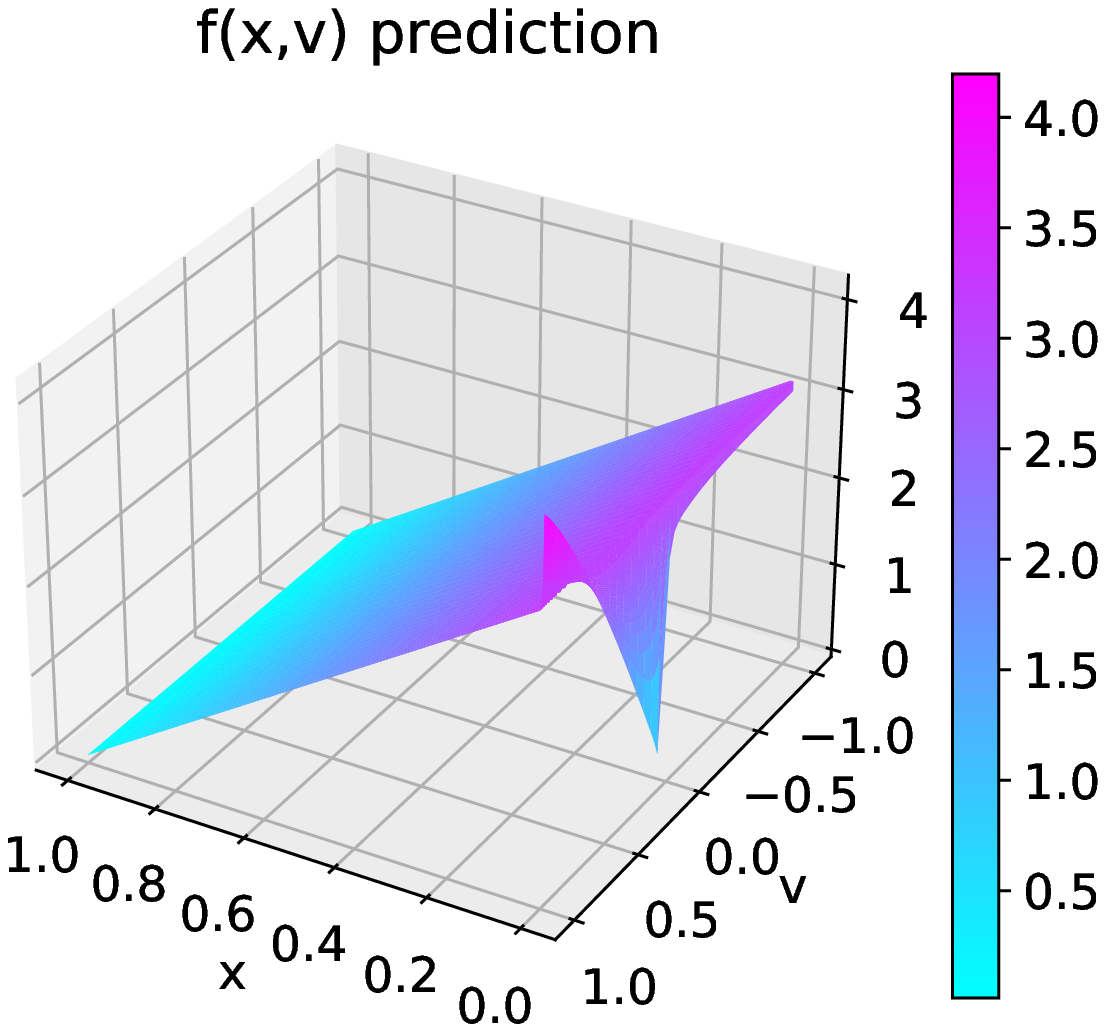}}
	{\includegraphics[width=0.35\textwidth]{rg_1d_bl_epsi_zpzz1_f_ref2.eps}}
	{\includegraphics[width=0.35\textwidth]{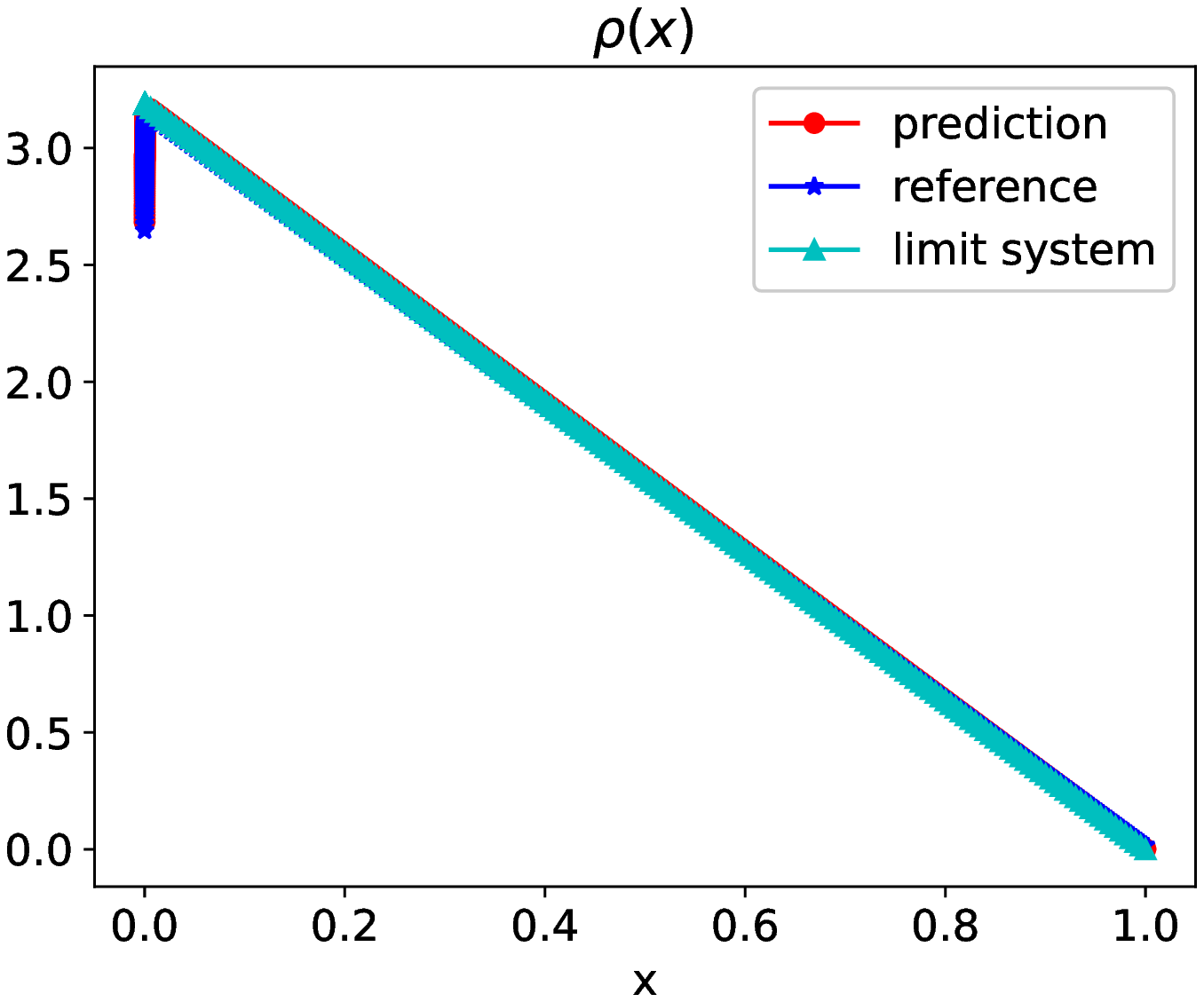}}
	{\includegraphics[width=0.35\textwidth]{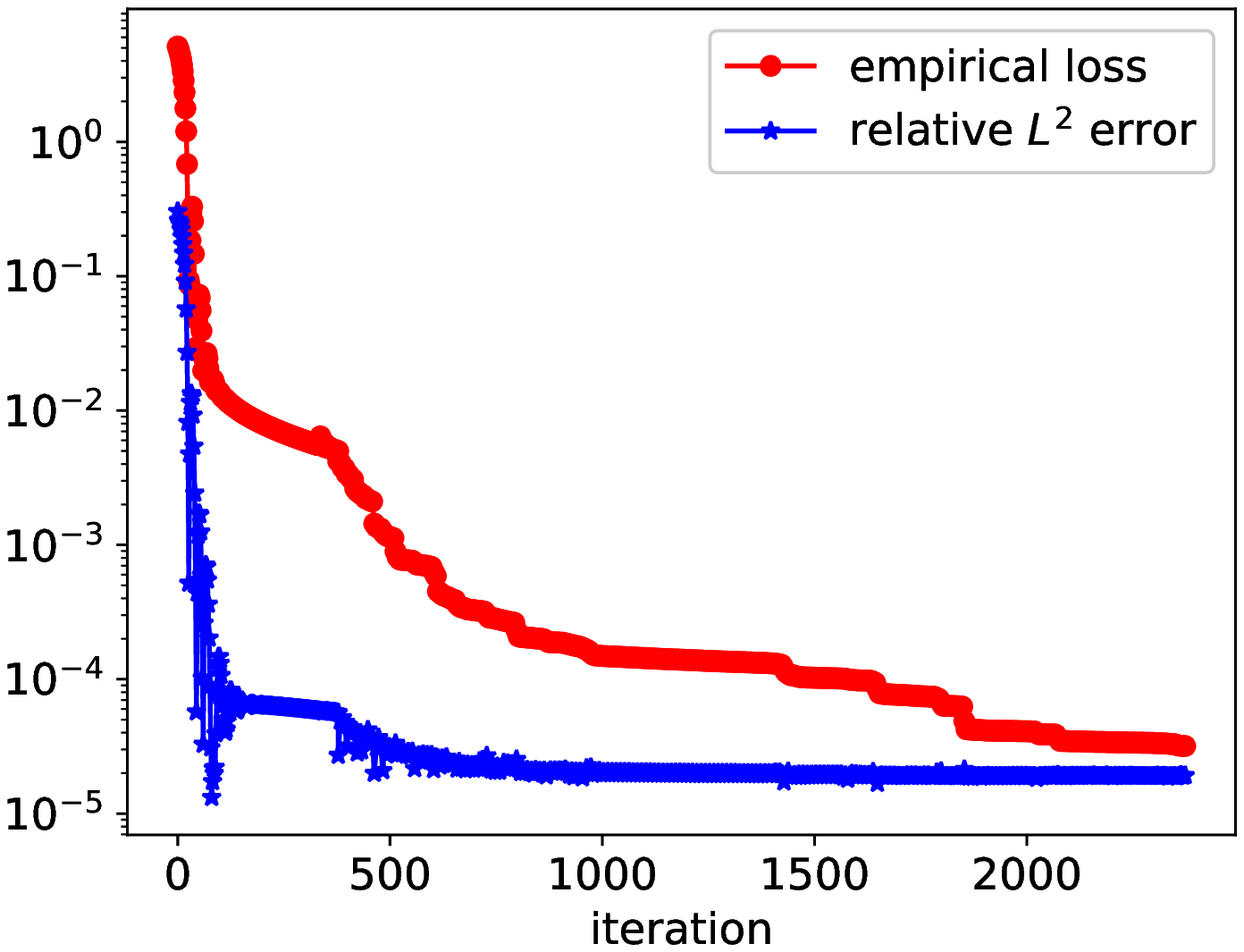}}
	\caption{Example~\ref{ex:transp} with $\eps = 10^{-3}$, we use $n_l = 4$, $n_r = 50$, $N^r_x = 80$, $N^r_v=60$ and $N^b_v = 60$ for training. The top left is $f(x,v)$ prediction, top right is the reference $f(x,v)$, bottom left is comparison of $\rho(x)$ and bottom right is the empirical loss and relative $L^2$ error vs iteration number. }
	\label{fig: hsp_1d_bl_epsizpzz1}
\end{figure}

\subsection{RTE in two dimensions with boundary layer} \label{2DBL}
As with \cref{sec:1D2}, we first solve the half space problem and then the corresponding RTE. 
\begin{example}\label{ex:2dhsp}
	2D half space problem: for $z\in [0,\infty)$ and $y\in[-1,1]$
	\begin{equation*} % \label{eqn:hsp2d}
		\begin{cases}{}
			\cos \alpha \partial_z f_\bl(z, y, \alpha) = \average{f_\bl} - f_\bl \,, \quad \average{f_\bl} = \frac{1}{2\pi} \int_0^{2\pi} f_\bl \rd \alpha\,, \\
			f_\bl(0, y, \alpha) = (1-y^2) \alpha , \quad \cos \alpha<0\,.
		\end{cases}
	\end{equation*}
\end{example}
Then it admits a limit (see formula (7.3) in \cite{golse2003domain}):
\begin{equation} \label{10111}
	f_\bl^{\infty}(y):= \lim_{z\rightarrow \infty }f_\bl(z, y, \alpha) = \frac{1}{\sqrt{\pi}} \int_{\Gamma_-}  (1- y^2) \alpha  \cos \alpha H(\alpha) \rd \alpha\,,
\end{equation}
where $H$ is the Chandrasekhar H-function that satisfies 
\[
\frac{1}{H(\alpha)} = \int_{\Gamma_-}  \frac{H(\xi)}{\cos \alpha + \cos \xi} \cos \xi \rd \xi\,,
\]
and $\Gamma_- = [0, \pi/2] \cup [3 \pi /2, 2 \pi]$. 

Additionally, we can get the reflection boundary condition at $x=0$. Since the reflected velocity $\tilde{\bv}$ at boundary is
\[
\tilde{\bv} = \bv - 2(\bv \cdot \bn_{\bx}) \bn_{\bx}\,,\quad \bv = (\cos \alpha, \sin \alpha)\,,
\]
and in our case $\bn_{\bx} = (-1, 0)$, we have $\tilde \bv = (-\cos \alpha, \sin \alpha)$. Then 
\begin{equation} \label{10112}
	\begin{cases}{}
		f_\bl(0, y, \pi-\alpha) = \int_{\Gamma_-}  \xi (1-y^2) \cos \xi {H(\xi)H(\alpha)}/{(\cos \alpha + \cos \xi)} \rd \xi \,, \quad \alpha \in [0, \pi/2]\,,
		\\ f_\bl(0, y, 3\pi-\alpha) = \int_{\Gamma_-}  \xi (1-y^2) \cos \xi {H(\xi)H(\alpha)}/{(\cos \alpha + \cos \xi)} \rd \xi \,,\quad  \alpha \in [3\pi/2, 2\pi]\,.
	\end{cases}
\end{equation}
In Figure~\ref{fig: hsp_2d_H}, we plot the numerical prediction from the neural network approximation with parameters $n_l = 3$, $n_r = 50$, $N_x = 200$, $N_y = 50$ and $N_v=40$, and compared it with the reference solution \eqref{10111} and \eqref{10112}.

\begin{figure}[htbp]
	\centering
	{\includegraphics[width=0.3\textwidth]{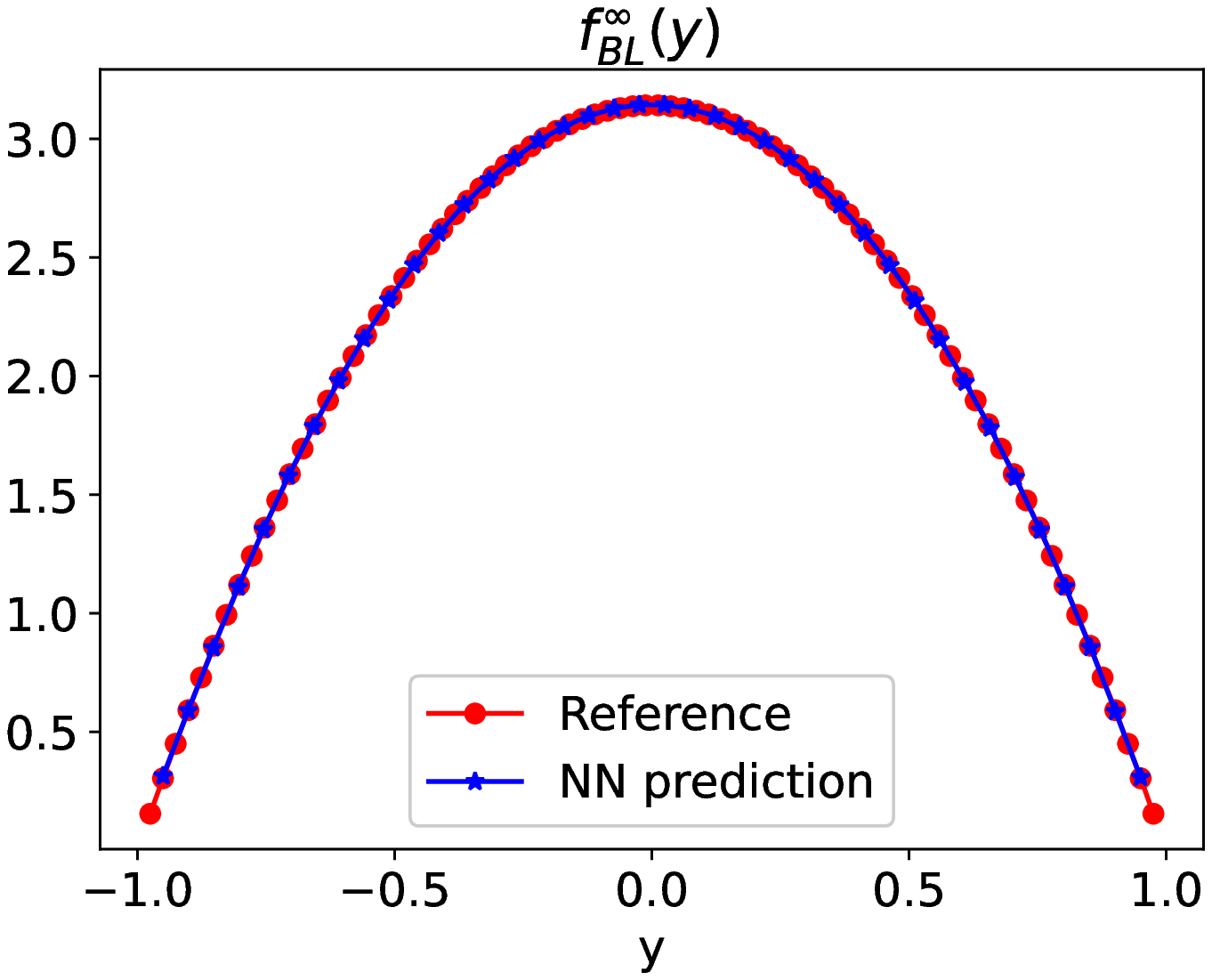}}
	{\includegraphics[width=0.3\textwidth]{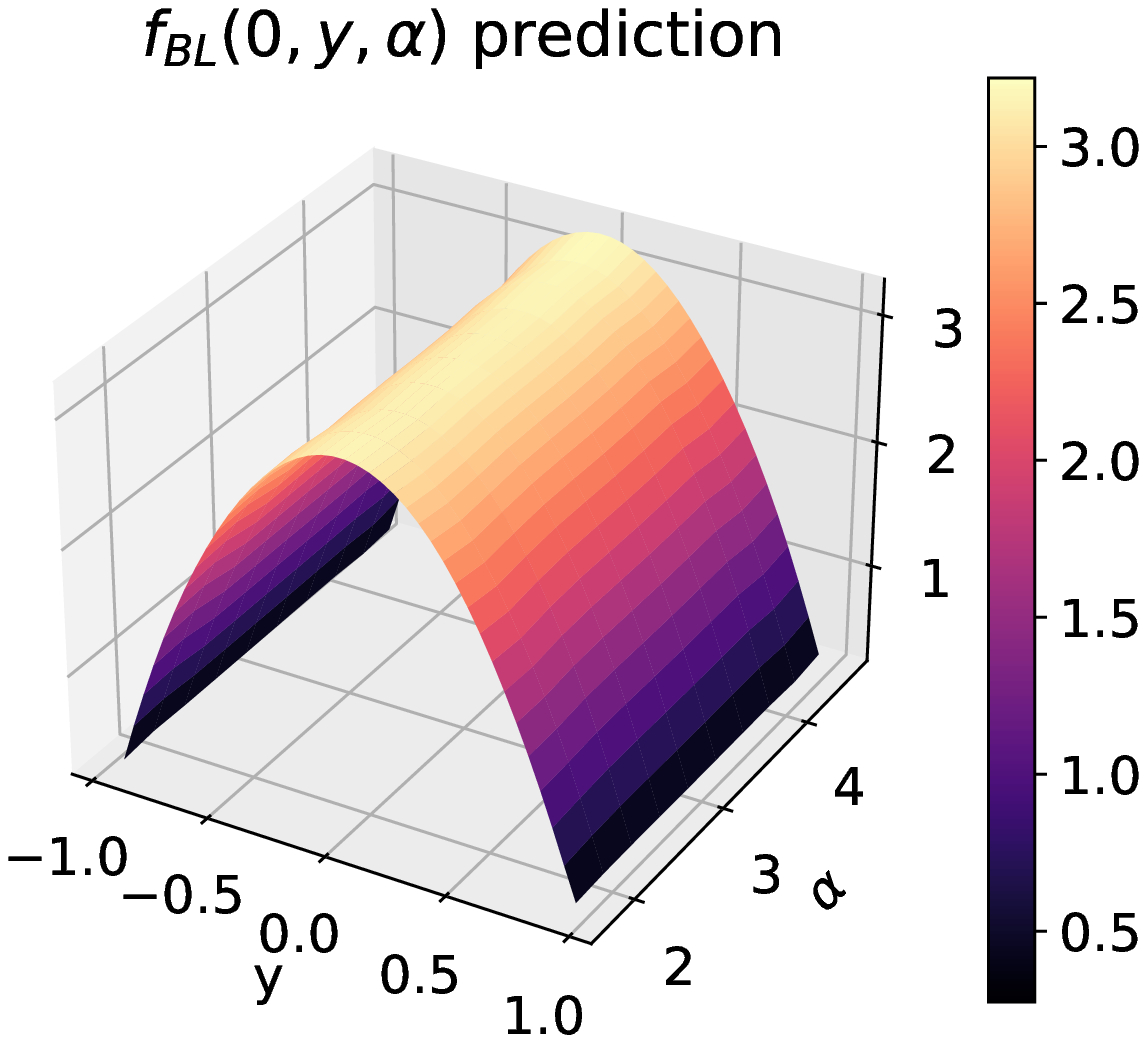}}
	{\includegraphics[width=0.3\textwidth]{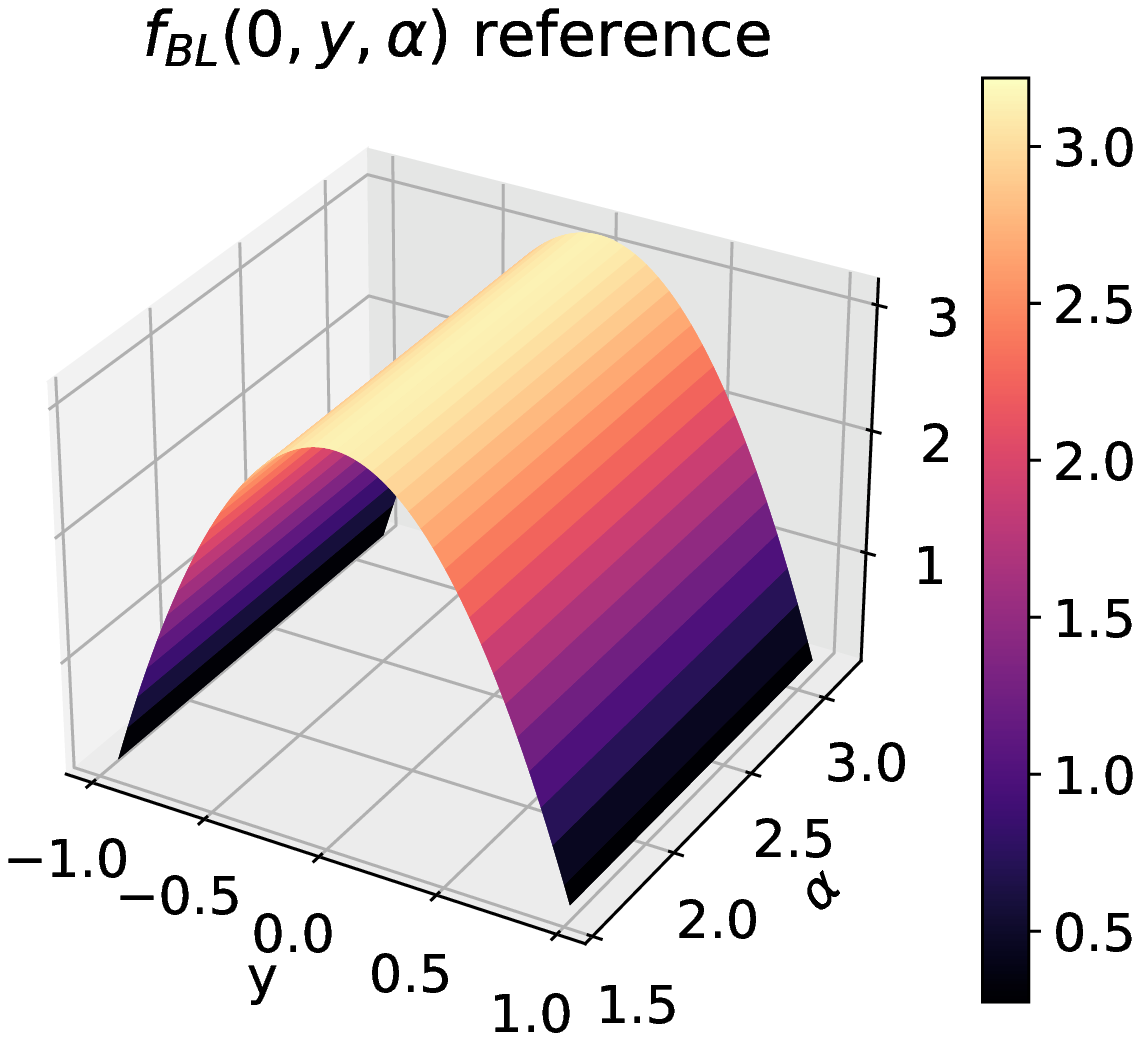}}
	\caption{Example~\ref{ex:2dhsp}. Left: plot of $f_\bl^\infty(y)$. Middle and right are plots of $f_\bl(0,y,\alpha)$ for $\alpha \in [0, 2\pi]$.The reference solution are obtained from the formula \eqref{10111} and \eqref{10112}.}
	\label{fig: hsp_2d_H}
\end{figure}

\begin{example}\label{ex:2dtransp}
	We then move on to solve the 2D transport equation, $\bx \in [-1,1]^2$, $\bv = (\cos \alpha, \sin \alpha)$, $\alpha \in [0, 2 \pi]$: 
	\begin{equation*} %\label{eqn:2D_iso_bl}
		\begin{cases}
			\eps \boldsymbol{v} \cdot \nabla_{\boldsymbol{x}} f = \frac{1}{2 \pi} \int_{|\boldsymbol{v}|=1} f(\bx, \bv) d \bv' - f \,, \\
			f(-1, y, \alpha) =  (1-y^2) \alpha, ~ \alpha \in [0, \pi/2] \cup [3 \pi/2, 2 \pi]  \,, \\
			f(1, y ,\alpha) = 0,  ~ \alpha \in [\pi/2, 3 \pi /2]  \,, \\
			f(x, -1 ,\alpha) = 0,  ~ \alpha \in [0, \pi ] \,, \\
			f(x, 1 ,\alpha) = 0,  ~ \alpha \in [\pi, 2 \pi ] .
		\end{cases}
	\end{equation*}
\end{example}

When $\eps = 1$, we use the macro-micro decomposition based PINN, and when $\eps=10^{-3}$, we include a boundary layer corrector which is computed in Example~\ref{ex:2dhsp}. In both cases, the numerical parameters are $n_l = 4$, $n_r = 30$, $N^r_x = 40$, $N^r_y=40$, $N^r_v=40$,  $N^b_v=40$, $N^b_x = 40$, $N^b_y = 40$ and $N^b_v=40$ for training. As a comparison, we use a finite difference method with uniform grid $N^r_x = 60$, $N^r_y = 60$, $N^r_v=60$, for $\eps =1$. For $\eps = 10^{-3}$, we solve the diffusion limit 
\begin{equation*} %\label{eqn:2d_diff}
	\begin{cases}
		\Delta \rho = 0  \,, \\
		\rho(-1, y) =  \pi (1-y^2),\\
		\rho(1,y) = \rho(x,-1) = \rho(x, 1) = 0\,.
	\end{cases}
\end{equation*}
Here the boundary condition $\rho(-1,y)$ is obtained from \eqref{10111}. 
The numerical results are presented in Figure~\ref{fig: hsp_2d_bl_eps1} and \ref{fig: hsp_2d_bl}.

\begin{figure}[htbp]
	\centering
	{\includegraphics[width=0.3\textwidth]{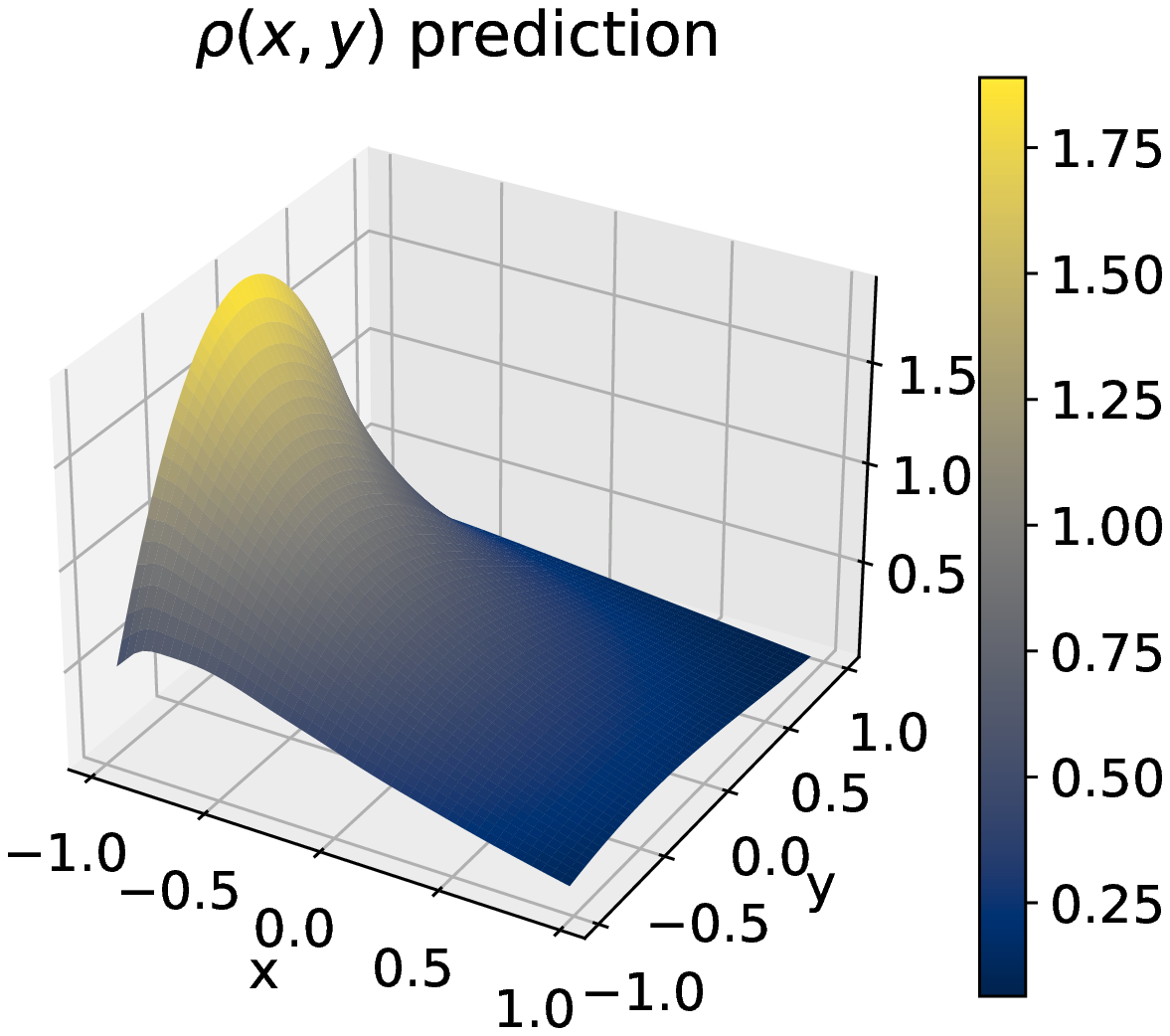}}
	{\includegraphics[width=0.3\textwidth]{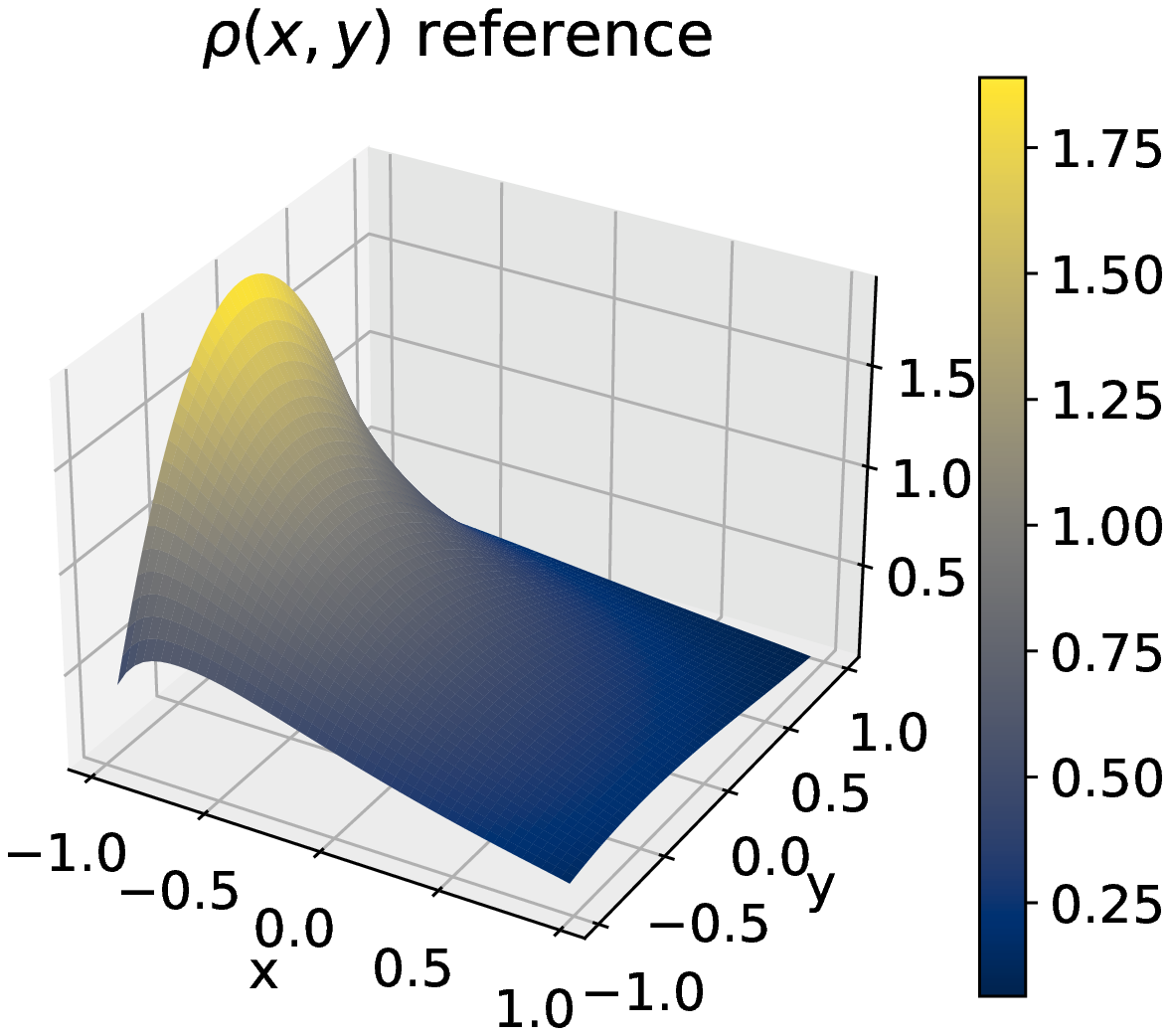}}
	{\includegraphics[width=0.3\textwidth]{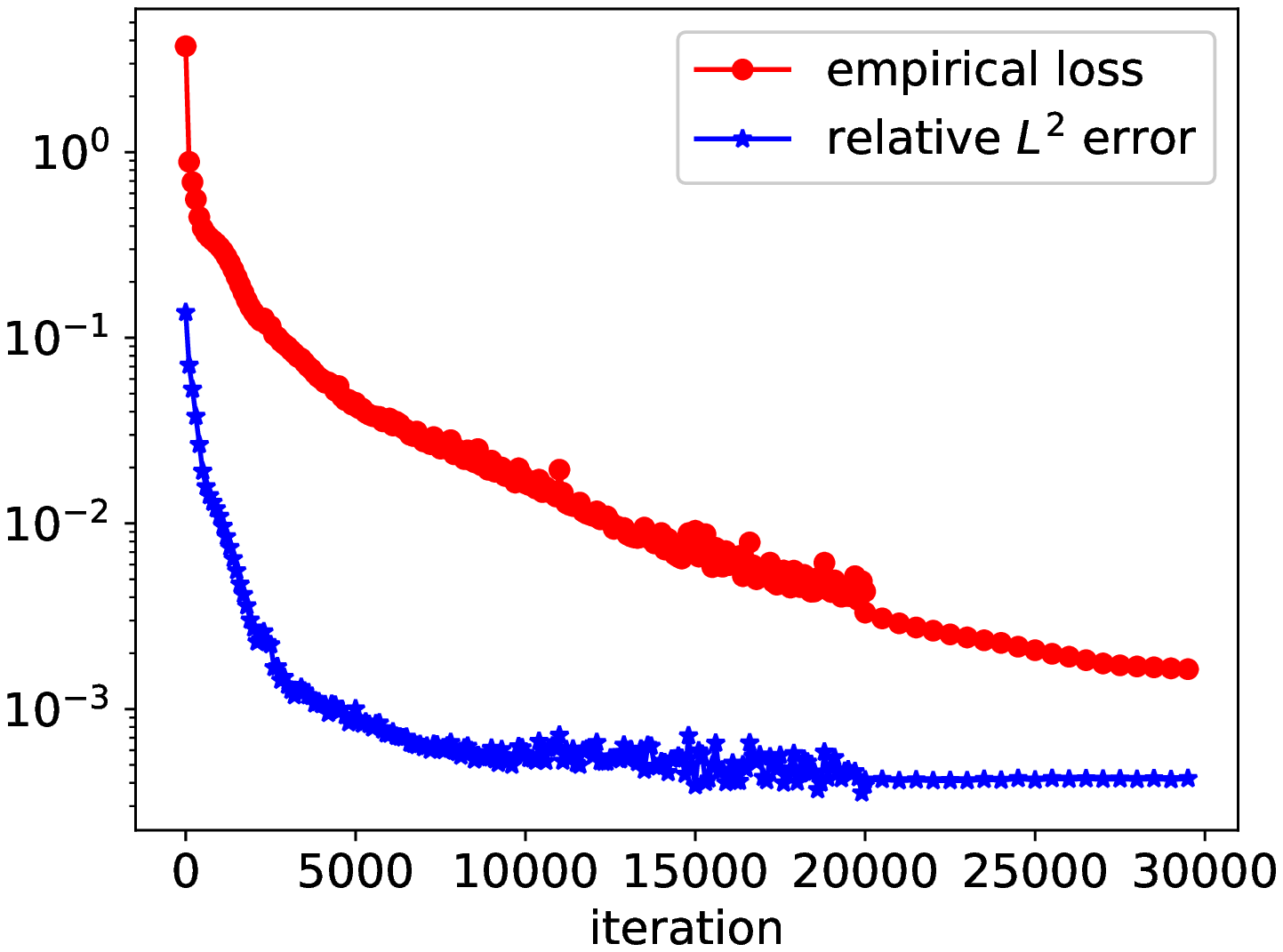}}
	\caption{Example~\ref{ex:2dtransp} with $\eps = 1$.}
	\label{fig: hsp_2d_bl_eps1}
\end{figure}

\begin{figure}[htbp]
	\centering
	{\includegraphics[width=0.45\textwidth]{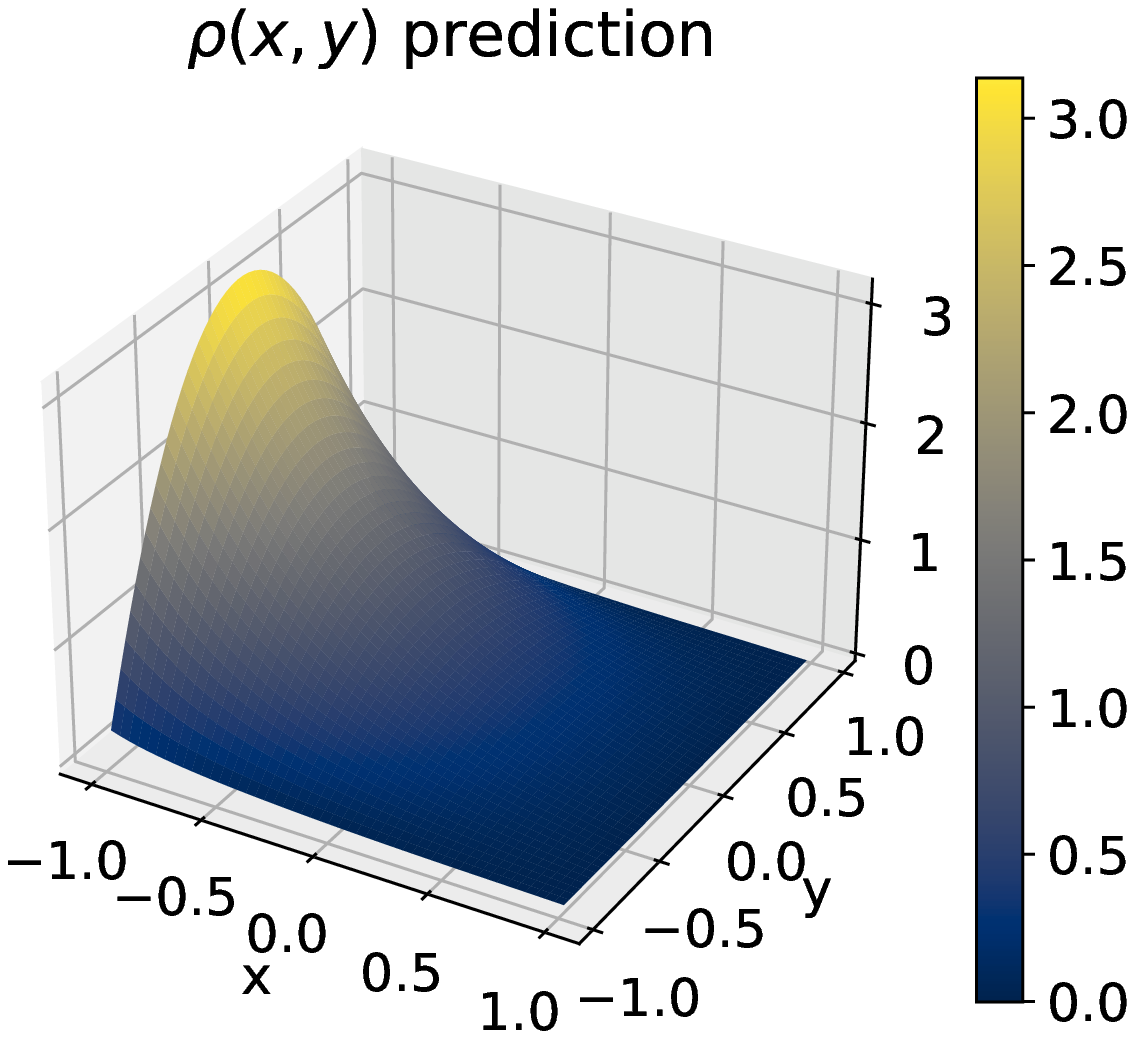}}
	{\includegraphics[width=0.45\textwidth]{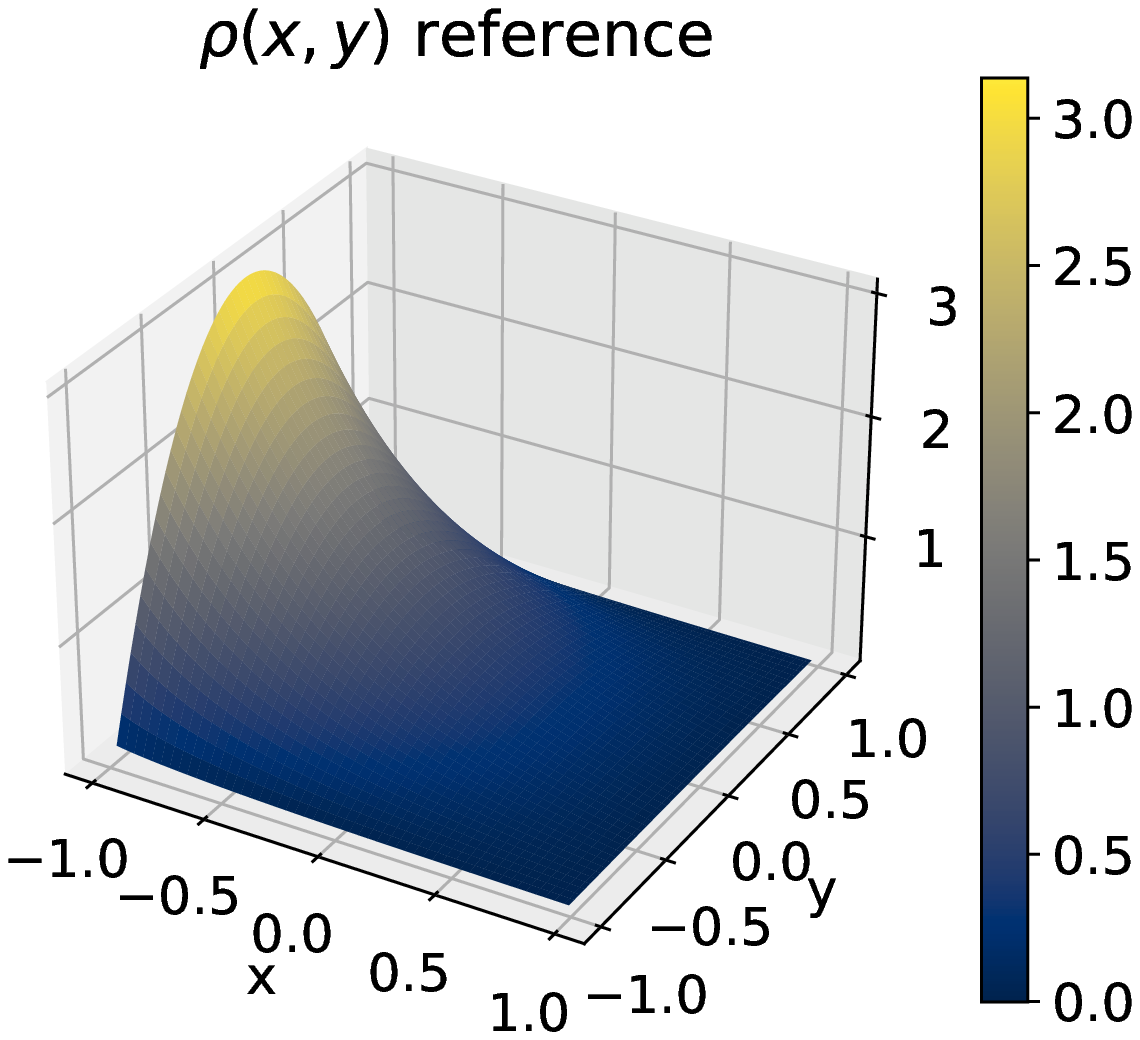}}
	\caption{Example~\ref{ex:2dtransp} with $\eps = 10^{-3}$. }
	\label{fig: hsp_2d_bl}
\end{figure}

\subsection{Nonlinear RTE}
At last, we consider an example of nonlinear RTE in one dimension.
\begin{example}\label{ex:nonl}
	For $x \in [0,1]$, $v \in [-1, 1]$ 
	\begin{equation} \label{eqn:NRTE}
		\begin{cases}{}
			\eps v \partial_x I(x,v) = \sigma (ac T^4(x) - I(x,v)), \\
			\eps^2 \partial_{xx} T(x) =  \sigma (ac T^4(x) -\langle I(x,v) \rangle ) \,,\\
			I(0,v>0) = 1, ~ I(1, v<0) = 0 \,, \\
			T(0) = 1, ~ T(1) = 0\,,
		\end{cases}
	\end{equation}
	where $a$, $c$ and $\sigma$ are three constants. 
\end{example}
For more details on nonlinear RTE, please refer to \cite{li2020asymptotic}. When $\eps \rightarrow 0$, $I$ and $T$ will converge to $I_0$ and $T_0$, which satisfy
\begin{equation}\label{eqn:T_limit}
	\begin{cases}
		\frac{ac}{3\sigma } \partial_{xx} T_0^4 +\partial_{xx} T_0 = 0 \,, \\
		T_0(0) = 1, ~ T_0(1) = 0 .
	\end{cases}
\end{equation}

To solve \eqref{eqn:NRTE}, we again conduct the macro-micro decomposition for $I$:
\[
I = \rho(x) + \eps g(x,v), \quad \text{where} ~ \rho(x) = \average{I}, ~ \average{g} = 0.
\]
Then the corresponding decomposed system reads:
\begin{equation*} %\label{eqn: rg_hsp_general_nonl}
	\begin{cases}{}
		\langle v \partial_x g \rangle = \partial_{xx} T, \, \\
		v \partial_x(\rho + \eps g) - \eps \partial_{xx} T = - \sigma g  , \, \\
		\eps^2 \partial_{xx} T = \sigma ac T^4  - \sigma \rho , \\
		\rho(0)  + \eps g(0, v>0)  = 1, \quad 
		\rho(1) + \eps g(1, v<0)  = 0, \\
		T(0) = 1, \quad T(1) = 0.
	\end{cases}
\end{equation*}
As a result, the loss function has the form: 
\begin{equation*} %\label{eqn:nonl_loss}
	\begin{aligned}
		& \mE(I, g, T)   = \|\average{v\partial_x g} \!-\! \partial_{xx} T \|_{L^2(\Omega_x)}^2 + \|v\partial_x (\rho + \eps g) \!-\! \eps \partial_{xx} T + \sigma g \|_{L^2(\Omega)}^2  + (T(0)-1)^2 + T(1)^2 \\
		&  + \| \eps^2 \partial_{xx} T \!-\! \sigma ac T^4 + \sigma \rho \|_{L^2(\Omega_x)}^2  
		+ \int_0^1(\rho (0)+\eps g(0,v) \!-\! 1)^2 \rd v 
		+ \int_{-1}^0(\rho (1)+\eps g(1,v))^2 \rd v .
	\end{aligned}
\end{equation*}
We then train the neural network with $n_l=4$ and $n_r = 50$, using $N^r_x = 80$, $N^r_v=60$ and $N^b_v = 60$ to generate training set. When $\eps=1$, we compute the reference solution by a finite difference method on a uniform grid with $N_x =200$ and $N_v=80$. When $\eps=10^{-3}$, we solve the limit system \eqref{eqn:T_limit} instead to get the reference solution. The results are collected in Figure~\ref{fig: rg_nonl_1} and \ref{fig: rg_nonl_zpzz1}, respectively.

\begin{figure}[!h]
	\centering
	{\includegraphics[width=0.45\textwidth]{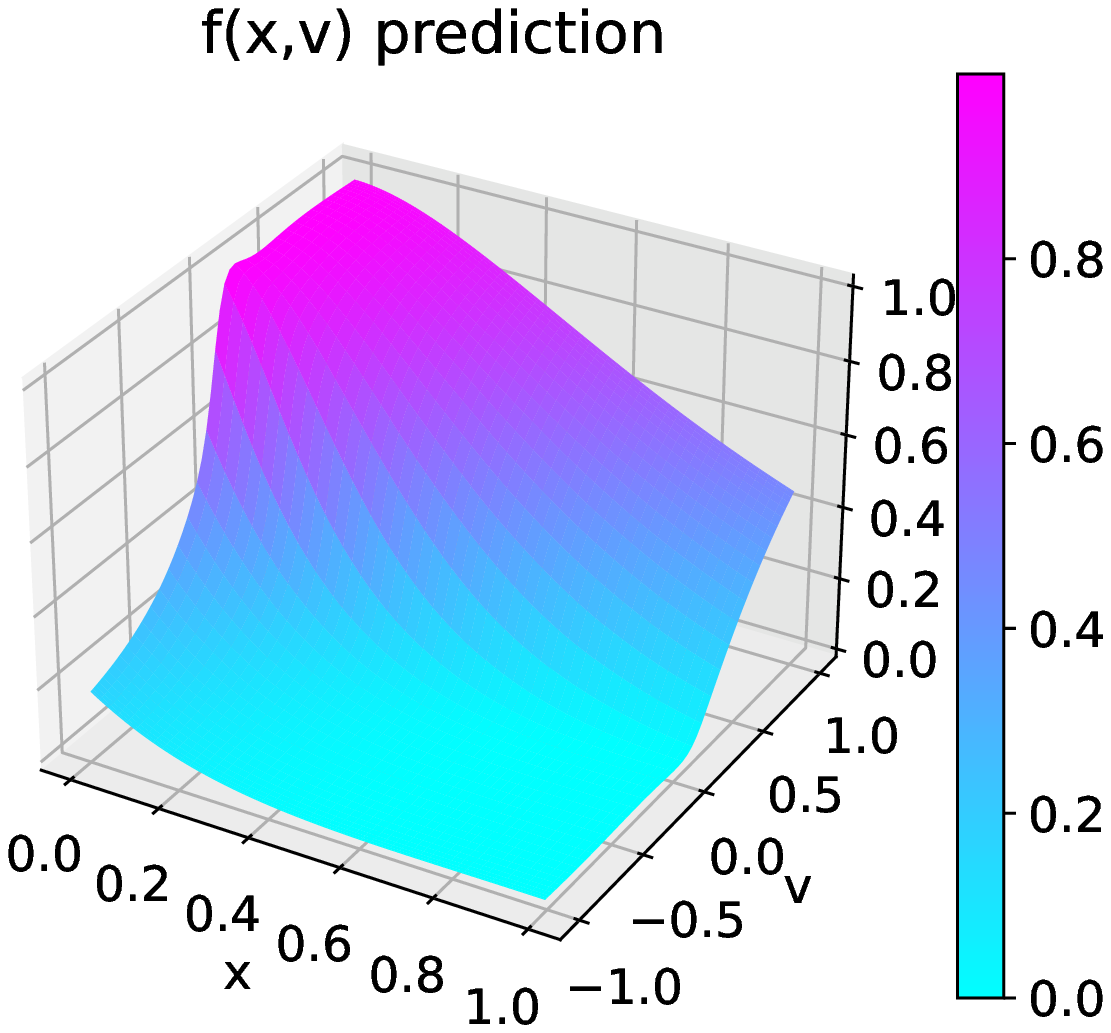}}
	{\includegraphics[width=0.45\textwidth]{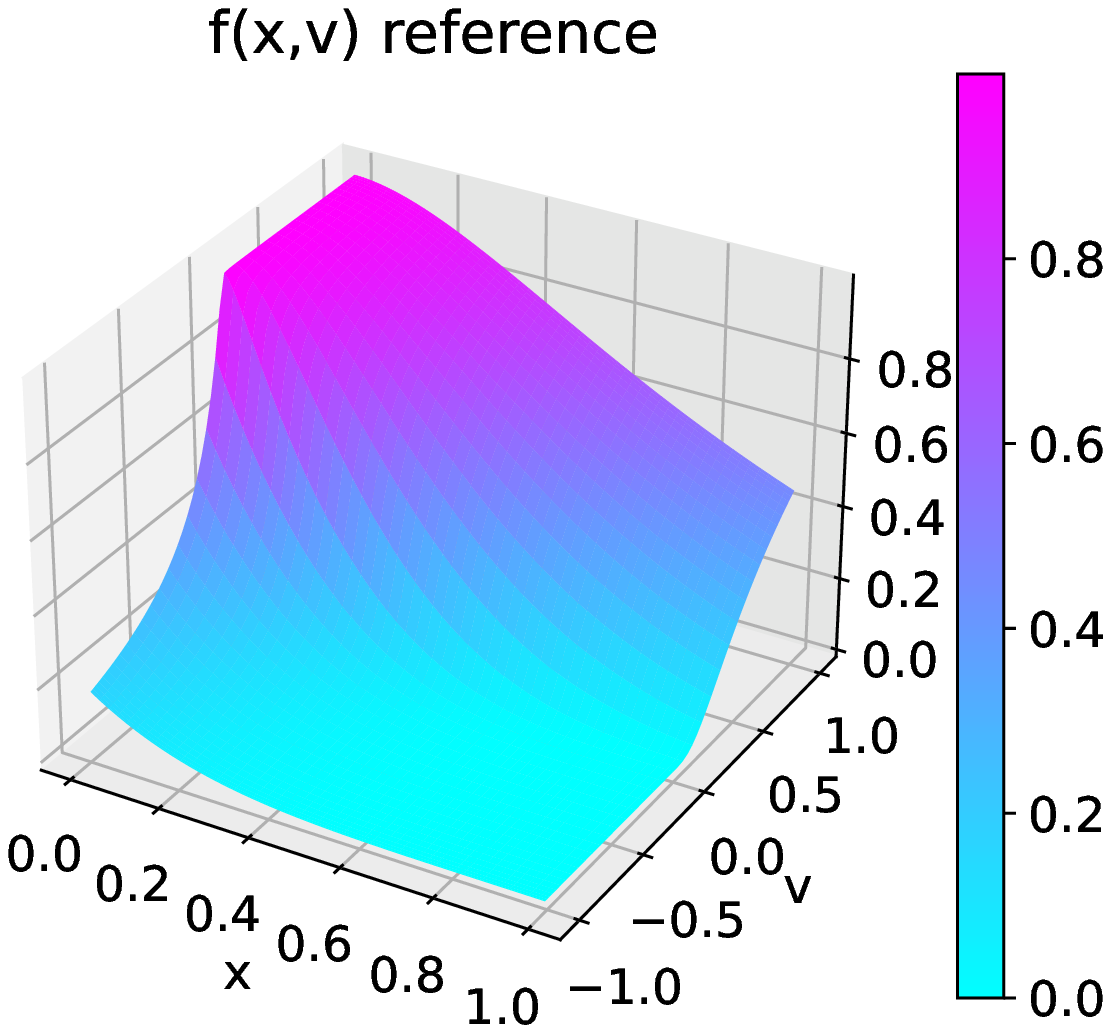}}
	{\includegraphics[width=0.45\textwidth]{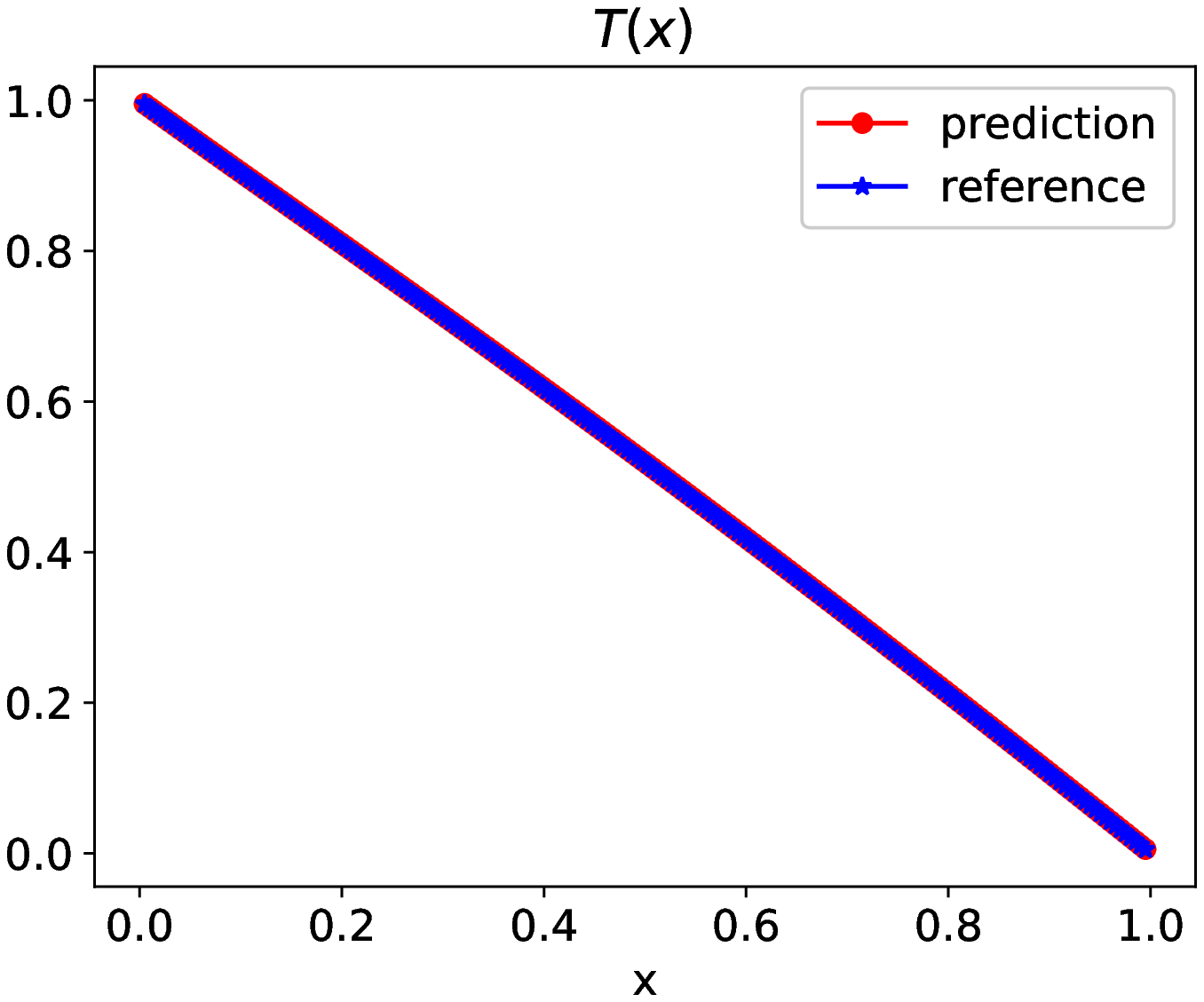}}
	{\includegraphics[width=0.45\textwidth]{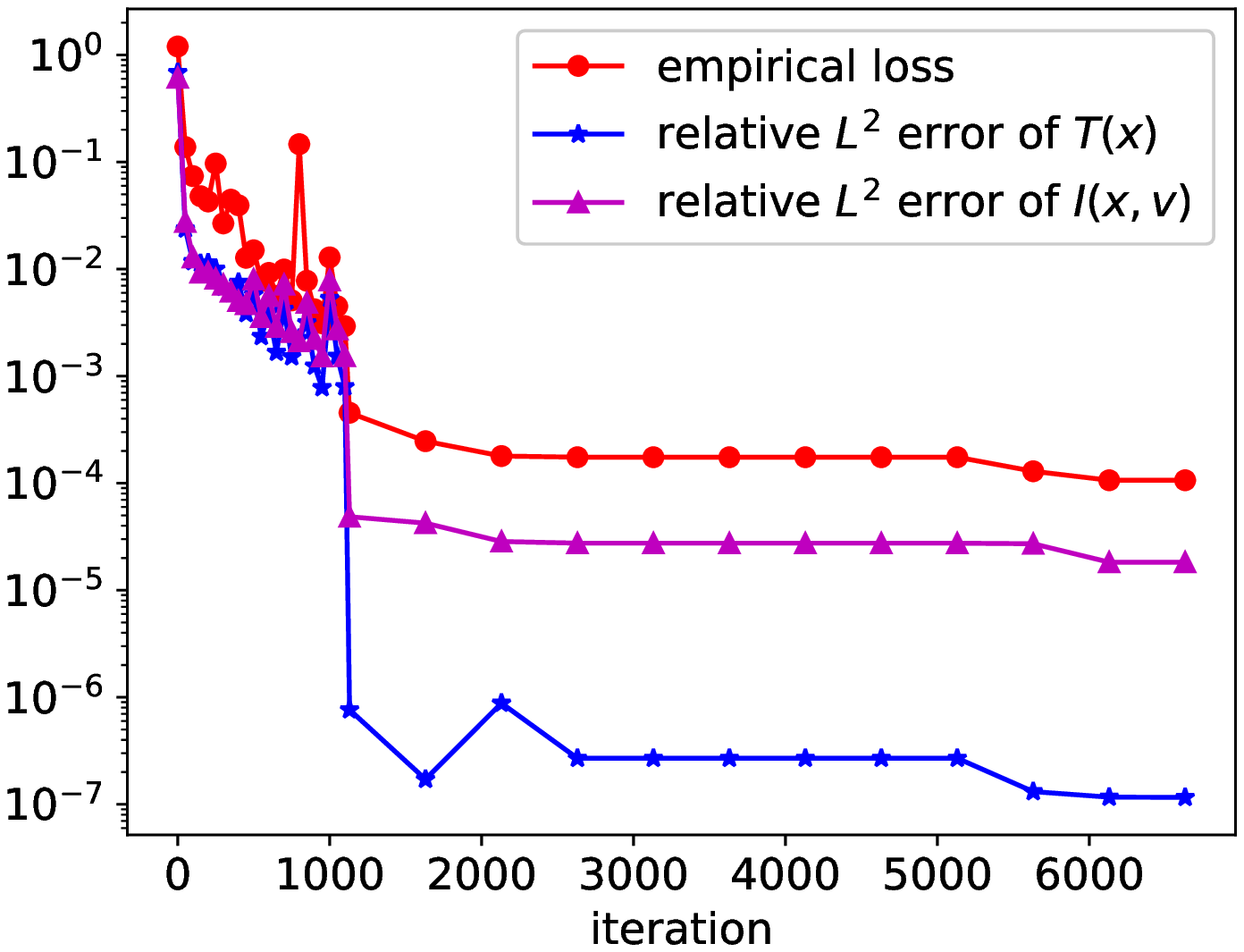}}
	\caption{Example~\ref{ex:nonl} with $\eps = 1$.
		%we used $nl = 4$, $nr = 50$, $N^r_x = 80$, $N^r_v=60$ and $N^b_v = 60$ for training. Compute the reference solution by finite difference method on test set with $N_x=200$, $N_v=80$. 
		Top left: $I(x,v)$ prediction. Top right: reference $I(x,v)$. Bottom left is comparison of $T(x)$. Bottom: the empirical loss and relative $L^2$ errors to reference solutions.}
	\label{fig: rg_nonl_1}
\end{figure}

\begin{figure}[!h]
	\centering
	{\includegraphics[width=0.45\textwidth]{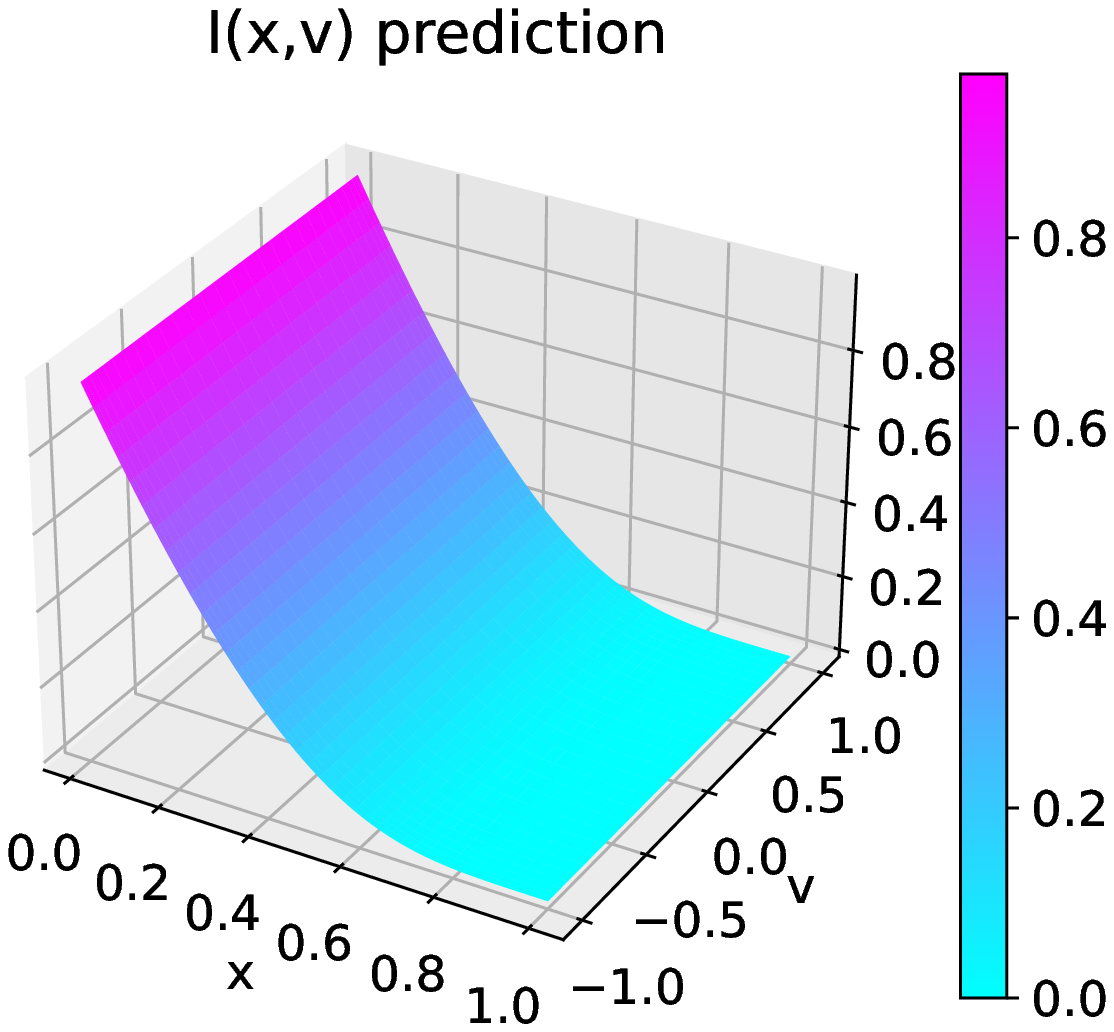}}
	{\includegraphics[width=0.45\textwidth]{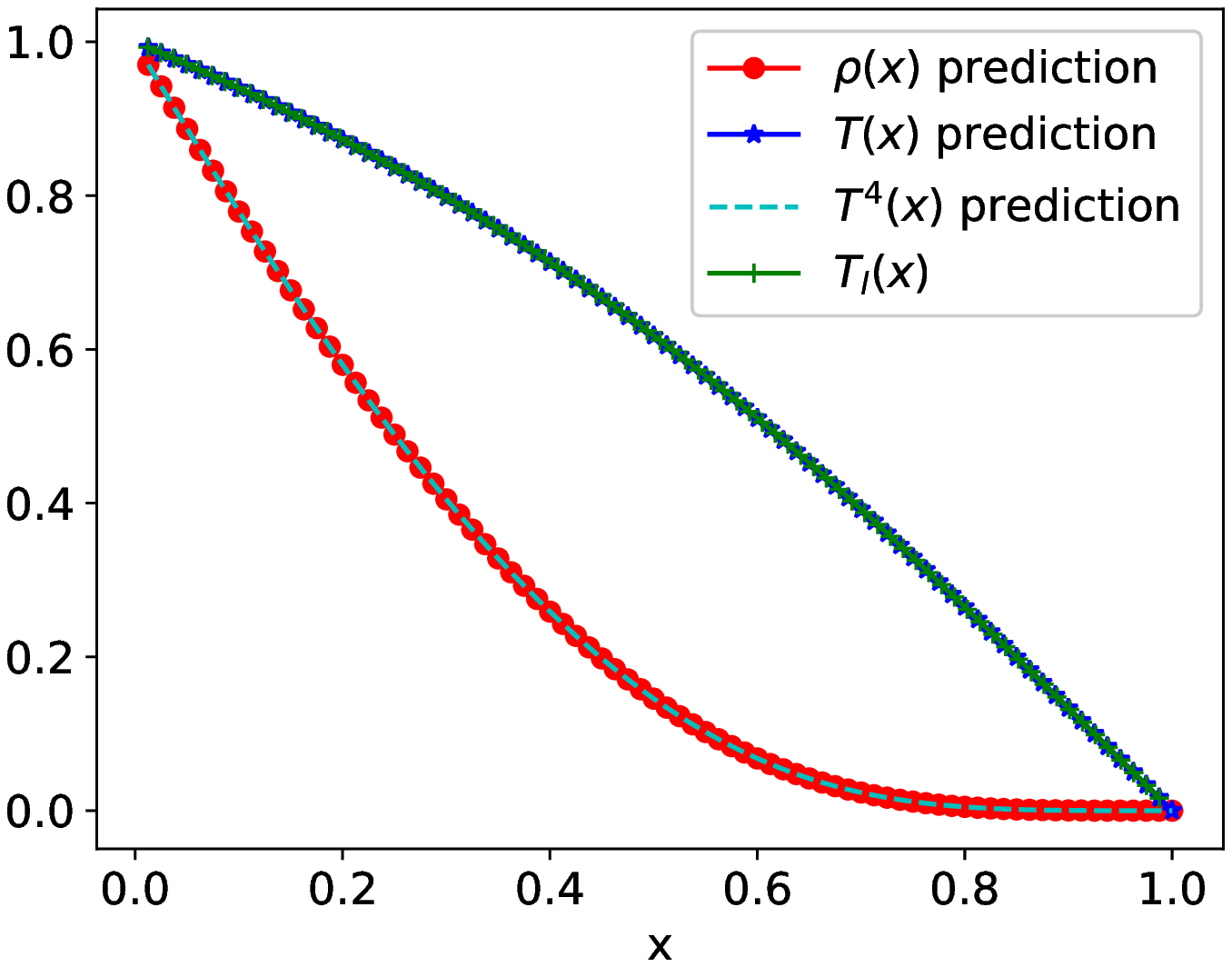}}
	\caption{Example~\ref{ex:nonl} with $\eps =10^{-3}$. 
		%we use $nl = 4$, $nr = 50$, $N^r_x = 80$, $N^r_v=60$ and $N^b_v = 60$ for training. Compute the reference limit system solution $T_l(x)$ by solving $\eqref{eqn:T_limit}$ through finite difference method with $N_x=80$. \lw{change the legend to $T_0$. or $T_0^4$?}
	}
	\label{fig: rg_nonl_zpzz1}
\end{figure}

{
	\section{Conclusion}
	In this paper, we develop a numerical scheme based on PINNs for steady RTE with diffusive scaling. As illustrated in Section 3.1, vanilla PINNs suffer from the instability issue when $\eps$ is small, and our major contribution is to resolve this issue by proposing an novel empirical loss function based on the micro macro decomposition. More importantly, a rigorous uniform stability result is established. We prove that the $L^2$-error of the PINNs prediction can be bounded by the aforementioned new empirical loss function uniformly in $\eps$. When $\eps$ is small and an an-isotropic boundary condition is considered, a boundary layer is expected and the neural network is hence hard to converge. We construct a boundary layer corrector based on the solution of the associated half space problem, which encodes the sharp transition information within the boundary layer and leaves the rest part of solution smooth and thus can be easily approximated.  Extensive numerical results  demonstrate the effectiveness of our novel numerical methods.

}
%\section*{Acknowledgements}
\section*{Acknowledgements}
Y.L. thanks the US National Science Foundation for its support through the award  DMS-2107934. L.W. and W.X. thank the National Science foundation for its support through the award DMS-1846854. The authors also acknowledge the Minnesota Supercomputing Institute (MSI) at the University of Minnesota for providing resources that contributed to the research results reported within this paper. 

\newpage

\bibliographystyle{siamplain}
\bibliography{references}

\begin{thebibliography}{10}

\bibitem{bardos1984diffusion}
{\sc C.~Bardos, R.~Santos, and R.~Sentis}, {\em Diffusion approximation and
  computation of the critical size}, Transactions of the american mathematical
  society, 284 (1984), pp.~617--649.

\bibitem{BLP79}
{\sc A.~Bensoussan, P.-L. Lions, and G.~C. Papanicolaou}, {\em Boundary layers
  and homogenizatlon of transport processes}, Publications of the Research
  Institute for Mathematical Sciences, 15 (1979), pp.~53--157.

\bibitem{BPR13}
{\sc S.~Boscarino, L.~Pareschi, and G.~Russo}, {\em Implicit-explicit
  {R}unge-{K}utta scheme for hyperbolic systems and kinetic equations in the
  diffusion limit}, SIAM J. Sci. Comput., 35 (2013), pp.~22--51.

\bibitem{chen2021solving}
{\sc Z.~Chen, L.~Liu, and L.~Mu}, {\em Solving the linear transport equation by
  a deep neural network approach}, arXiv preprint arXiv:2102.09157,  (2021).

\bibitem{DP14}
{\sc G.~Dimarco and L.~Pareschi}, {\em Numerical methods for kinetic
  equations}, Acta Numerica, 23 (2014), pp.~369--520.

\bibitem{egger2014lp}
{\sc H.~Egger and M.~Schlottbom}, {\em An lp theory for stationary radiative
  transfer}, Applicable Analysis, 93 (2014), pp.~1283--1296.

\bibitem{golse2003domain}
{\sc F.~Golse, S.~Jin, and C.~D. Levermore}, {\em A domain decomposition
  analysis for a two-scale linear transport problem}, ESAIM: Mathematical
  Modelling and Numerical Analysis-Mod{\'e}lisation Math{\'e}matique et Analyse
  Num{\'e}rique, 37 (2003), pp.~869--892.

\bibitem{han2014two}
{\sc H.~Han, M.~Tang, and W.~Ying}, {\em Two uniform tailored finite point
  schemes for the two dimensional discrete ordinates transport equations with
  boundary and interface layers}, Communications in Computational Physics, 15
  (2014), pp.~797--826.

\bibitem{han2020algorithms}
{\sc J.~Han, A.~Jentzen, et~al.}, {\em Algorithms for solving high dimensional
  pdes: From nonlinear monte carlo to machine learning}, arXiv preprint
  arXiv:2008.13333,  (2020).

\bibitem{han2018solving}
{\sc J.~Han, A.~Jentzen, and E.~Weinan}, {\em Solving high-dimensional partial
  differential equations using deep learning}, Proceedings of the National
  Academy of Sciences, 115 (2018), pp.~8505--8510.

\bibitem{hwang2020trend}
{\sc H.~J. Hwang, J.~W. Jang, H.~Jo, and J.~Y. Lee}, {\em Trend to equilibrium
  for the kinetic fokker-planck equation via the neural network approach},
  Journal of Computational Physics, 419 (2020), p.~109665.

\bibitem{Jin12}
{\sc S.~Jin}, {\em Asymptotic preserving ({AP}) schemes for multiscale kinetic
  and hyperbolic equations: a review}, Riv. Mat. Univ. Parma, 2 (2012),
  pp.~177--216.

\bibitem{jin2021asymptotic}
{\sc S.~Jin, Z.~Ma, and K.~Wu}, {\em Asymptotic-preserving neural networks for
  multiscale time-dependent linear transport equations}, arXiv preprint
  arXiv:2111.02541,  (2021).

\bibitem{jin2000uniformly}
{\sc S.~Jin, L.~Pareschi, and G.~Toscani}, {\em Uniformly accurate diffusive
  relaxation schemes for multiscale transport equations}, SIAM Journal on
  Numerical Analysis, 38 (2000), pp.~913--936.

\bibitem{JTH09}
{\sc S.~Jin, M.~Tang, and H.~Han}, {\em A uniformly second order numerical
  method for the one-dimensional discrete-ordinate transport equation and its
  diffusion limit with interface}, Networks and Heterogeneous Media, 4 (2009),
  pp.~35--65.

\bibitem{klar1998asymptotic}
{\sc A.~Klar}, {\em Asymptotic-induced domain decomposition methods for kinetic
  and drift diffusion semiconductor equations}, SIAM Journal on Scientific
  Computing, 19 (1998), pp.~2032--2050.

\bibitem{klar1998asymptotic2}
{\sc A.~Klar}, {\em An asymptotic-induced scheme for nonstationary transport
  equations in the diffusive limit}, SIAM journal on numerical analysis, 35
  (1998), pp.~1073--1094.

\bibitem{lagaris1998artificial}
{\sc I.~E. Lagaris, A.~Likas, and D.~I. Fotiadis}, {\em Artificial neural
  networks for solving ordinary and partial differential equations}, IEEE
  transactions on neural networks, 9 (1998), pp.~987--1000.

\bibitem{lee2020model}
{\sc J.~Y. Lee, J.~W. Jang, and H.~J. Hwang}, {\em The model reduction of the
  vlasov-poisson-fokker-planck system to the poisson-nernst-planck system via
  the deep neural network approach}, arXiv preprint arXiv:2009.13280,  (2020).

\bibitem{lemou2012micro}
{\sc M.~Lemou and F.~M{\'e}hats}, {\em Micro-macro schemes for kinetic
  equations including boundary layers}, SIAM Journal on Scientific Computing,
  34 (2012), pp.~B734--B760.

\bibitem{LM08}
{\sc M.~Lemou and L.~Mieussens}, {\em New asymptotic preserving scheme based on
  micro-macro formulation for linear kinetic equations in the diffusion limit},
  SIAM J. Sci. Comput., 31 (2008), pp.~334--368.

\bibitem{LM83}
{\sc E.~Lewis and W.~M. Jr.}, {\em Computational methods of neutron transport},
  John Wiley and Sons, 1983.

\bibitem{LLS15}
{\sc Q.~Li, J.~Lu, and W.~Sun}, {\em Diffusion approximations and domain
  decomposition method of linear transport equations: Asymptotics and
  numerics}, Journal of Computational Physics, 292 (2015), pp.~141--167.

\bibitem{LW17}
{\sc Q.~Li and L.~Wang}, {\em Implicit asymptotic preserving method for linear
  transport equations}, Communications in Computational Physics, 22 (2017),
  pp.~157--181.

\bibitem{li2020asymptotic}
{\sc W.~Li, P.~Song, and Y.~Wang}, {\em An asymptotic-preserving imex method
  for nonlinear radiative transfer equation}, arXiv preprint arXiv:2008.06730,
  (2020).

\bibitem{liu2021deep}
{\sc L.~Liu, T.~Zeng, and Z.~Zhang}, {\em A deep neural network approach on
  solving the linear transport model under diffusive scaling}, arXiv preprint
  arXiv:2102.12408,  (2021).

\bibitem{lu2021priori2}
{\sc J.~Lu and Y.~Lu}, {\em A priori generalization error analysis of two-layer
  neural networks for solving high dimensional schr\"odinger eigenvalue
  problems}, arXiv preprint arXiv:2105.01228,  (2021).

\bibitem{lu2021priori}
{\sc Y.~Lu, J.~Lu, and M.~Wang}, {\em A priori generalization analysis of the
  deep ritz method for solving high dimensional elliptic partial differential
  equations}, in Conference on Learning Theory, PMLR, 2021, pp.~3196--3241.

\bibitem{manteuffel1999boundary}
{\sc T.~A. Manteuffel, K.~J. Ressel, and G.~Starke}, {\em A boundary functional
  for the least-squares finite-element solution of neutron transport problems},
  SIAM Journal on Numerical Analysis, 37 (1999), pp.~556--586.

\bibitem{mishra2020estimates}
{\sc S.~Mishra and R.~Molinaro}, {\em Estimates on the generalization error of
  physics informed neural networks ({PINN}s) for approximating {PDE}s}, 2020.
\newblock arXiv preprint arXiv:2006.16144.

\bibitem{PCQL20}
{\sc Z.~Peng, Y.~Cheng, J.-M. Qiu, and F.~Li}, {\em Stability-enhanced ap
  imex-ldg schemes for linear kinetic transport equations under a diffusive
  scaling}, Journal of Computational Physics, 415 (2020), p.~109485.

\bibitem{raissi2019physics}
{\sc M.~Raissi, P.~Perdikaris, and G.~E. Karniadakis}, {\em Physics-informed
  neural networks: A deep learning framework for solving forward and inverse
  problems involving nonlinear partial differential equations}, Journal of
  Computational Physics, 378 (2019), pp.~686--707.

\bibitem{shalev2014understanding}
{\sc S.~Shalev-Shwartz and S.~Ben-David}, {\em Understanding machine learning:
  From theory to algorithms}, Cambridge university press, 2014.

\bibitem{sirignano2018dgm}
{\sc J.~Sirignano and K.~Spiliopoulos}, {\em Dgm: A deep learning algorithm for
  solving partial differential equations}, Journal of computational physics,
  375 (2018), pp.~1339--1364.

\bibitem{SJX17}
{\sc W.~Sun, S.~Jiang, and K.~Xu}, {\em An implicit unified gas kinetic scheme
  for radiative transfer with equilibrium and non-equilibrium diffusive
  limits}, Communications in Computational Physics, 22 (2017), pp.~889--912.

\bibitem{TWZ21}
{\sc M.~Tang, L.~Wang, and X.~Zhang}, {\em Accurate front capturing asymptotic
  preserving scheme for nonlinear gray radiative transfer equation}, SIAM
  Journal on Scientific Computing, 43 (2021), pp.~B759--B783.

\bibitem{wang2020understanding}
{\sc S.~Wang, Y.~Teng, and P.~Perdikaris}, {\em Understanding and mitigating
  gradient pathologies in physics-informed neural networks}, arXiv preprint
  arXiv:2001.04536,  (2020).

\bibitem{wang2020and}
{\sc S.~Wang, X.~Yu, and P.~Perdikaris}, {\em When and why pinns fail to train:
  A neural tangent kernel perspective}, arXiv preprint arXiv:2007.14527,
  (2020).

\bibitem{weinan2018deep}
{\sc E.~Weinan and B.~Yu}, {\em The deep ritz method: a deep learning-based
  numerical algorithm for solving variational problems}, Communications in
  Mathematics and Statistics, 6 (2018), pp.~1--12.

\bibitem{yang2006numerical}
{\sc X.~Yang, F.~Golse, Z.~Huang, and S.~Jin}, {\em Numerical study of a domain
  decomposition method for a two-scale linear transport equation}, Networks \&
  Heterogeneous Media, 1 (2006), p.~143.

\end{thebibliography}

\end{document}